\newcommand{\rosso}[1]{{\color{black} {#1}}}
\newcommand{\blu}[1]{{\color{black} {#1}}}
\newcommand{\mb}[1]{{\rosso{#1}}}
\newcommand{\xupref}[2]{\hspace{-0.3ex}\stackrel{\eqref{#1}}{#2}} 
\newtheorem{theorem}{Theorem}[section]
\newtheorem{lemma}[theorem]{Lemma}
\newtheorem{proposition}[theorem]{Proposition}
\newtheorem{definition}[theorem]{Definition}
\newtheorem{remark}[theorem]{Remark}
\newtheorem{example}[theorem]{Example}
\numberwithin{equation}{section}
\newcommand{\e}{\varepsilon}
\newcommand{\N}{\mathbb N}
\newcommand{\R}{\mathbb R}
\newcommand{\s}{\mathbb S}
\renewcommand{\div}{{\rm div}}
\DeclareMathOperator*{\loc}{loc}
\newcommand{\leb}{{\mathscr L}}
\newcommand{\hn}{{\mathcal H}^{n-1}}
\newcommand{\hu}{{\mathcal H}^{1}}
\newcommand{\de}{\,\mathrm{d}}
\renewcommand{\setminus}{\backslash}
\newcommand{\mres}{\mathbin{\vrule height 1.6ex depth 0pt width 0.13ex\vrule height 0.13ex depth 0pt width 1.3ex}}
\newcommand{\asymm}{{\bigtriangleup}}
\newcommand{\per}{{\mathscr{P}}}
\newcommand{\BV}{{\rm BV}}
\newcommand{\ap}{{\mathcal{AP}}}
\newcommand{\ac}{{\mathcal{X}}}
\newcommand{\acreg}{{\mathcal{X}_{\mathrm{reg}}}}
\newcommand{\nl}{{\mathcal{N}}}
\newcommand{\f}{{\mathscr{F}}}
\newcommand{\fbar}{{\mkern2mu\overline{\mkern-2mu\mathscr{F}}}}
\newcommand{\fc}{{\mathcal{F}}}
\newcommand{\g}{{\mathscr{G}}}
\newcommand{\q}{{\mathcal{Q}}}
\newcommand{\cil}{{\mathcal{C}}}
\title{\rosso{Area quasi-minimizing partitions with a graphical constraint: relaxation and two-dimensional \blu{partial} regularity}}
\author{Marco Bonacini\thanks{\addmarco\ \emailmarco} \and Riccardo Cristoferi\thanks{\addric\ \emailric}}
\date{\today}
\newcommand{\email}[1]{E-mail: \tt #1}
\newcommand{\emailmarco}{\email{marco.bonacini@unitn.it}}
\newcommand{\emailric}{\email{riccardo.cristoferi@ru.nl}}
\newcommand{\addmarco}{\emph{Department of Mathematics, University of Trento, Italy.}}
\newcommand{\addric}{Corresponding Author, \emph{Department of Mathematics - IMAPP, Radboud University, Nijmegen, The Netherlands.}}
\begin{document}
	
\maketitle

\begin{abstract}
We consider a variational model for periodic partitions of the upper half-space into three regions, where two of them have prescribed volume and are subject to the geometrical constraint that their union is the subgraph of a function, whose graph is a free surface. The energy of a configuration is given by the weighted sum of the areas of the interfaces between the different regions, and a general volume-order term. We establish existence of minimizing configurations via relaxation of the energy involved, in any dimension. Moreover, we prove partial regularity results for volume-constrained minimizers in two space dimensions. Thin films of diblock copolymers are a possible application and motivation for considering this problem.
\end{abstract}



\section{Introduction}

The goal of this paper is to initiate the analytical investigation of variational models for partitions with quasi-minimal surface area, subject to a geometrical graph constraint. The admissible configurations of the model that we consider here consist of two phases (i.e.\ regions of the space with prescribed volume) which are confined by a flat substrate on the bottom side, and by the graph of a Lipschitz function on the upper side. The upper interface between the two phases and the region above them corresponds to a free surface.

The imposition of a graph constraint on the admissible configurations is not new in the mathematical literature and appeared in particular in variational models for epitaxially strained elastic films, see \cite{BonCha02,ChaSol07,FonFusLeoMor07,DavPio19,CriFri20}; however, to the best of our knowledge this is the first instance where a similar constraint is enforced on a system with multiple phases, and this constitutes the main novelty of this paper.

We are interested in the description of optimal configurations minimizing an energy functional given by the sum of the surface measures of the different interfaces between the phases, possibly with different weights. We also include in the total energy a general volume-order term (allowing, for instance, for possible nonlocal interactions among the two phases); the results in this paper are valid under quite general assumptions on this term, whereas its explicit form would be crucial for the characterization of optimal or equilibrium configurations.

This paper is the first step in the rigorous investigation of the properties of the energy and of optimal configurations of the system. In particular, we discuss the lower semicontinuity properties of the energy, which permits to prove existence of minimizing configurations via relaxation in any dimension. Moreover, we establish several regularity properties of minimizers in dimension two. Further investigations on the fine structure of optimal configurations will be the subject of future work.

A possible application of the variational model that we introduce is the description of equilibrium configurations of thin films of diblock copolymers, see Section~\ref{subsection:thinfilms} for details.

\subsection{The model}
We now pass to an introductory description of the model and of the main results obtained in this paper. For the precise definitions and assumptions we refer to Section~\ref{sect:def}.

\begin{figure}
	\centering
	\includegraphics[scale=0.8]{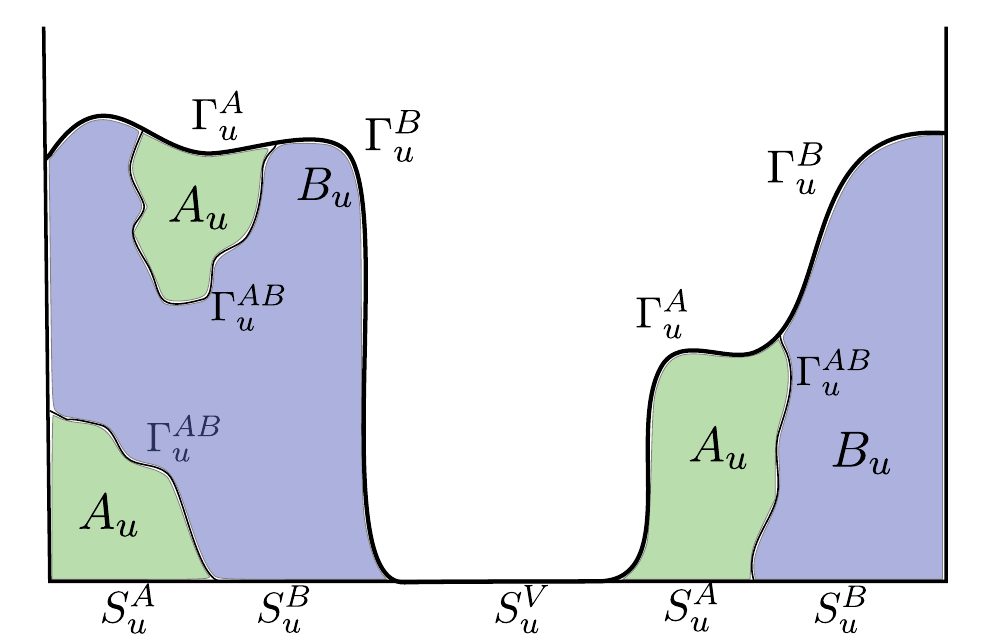}
	\caption{The different phases and interfaces of an admissible configuration.}
	\label{fig:admissibleconfigurations}
\end{figure}

We consider a configuration described by a phase variable $u$ defined in the upper half-space of $\R^n$, in general dimension $n\geq2$, taking values $+1$, $-1$, and $0$, representing the two phases $A_u=\{u=1\}$, $B_u=\{u=-1\}$, and the region above them $V_u=\{u=0\}$. For mathematical convenience we extend $u$ by a fixed value (say, $u=2$) also in the lower half-space. Having in mind the application to thin films of diblock copolymers, we will often use the terminology \emph{film} to denote the region occupied by the two phases $A_u\cup B_u$, \emph{substrate} to indicate the lower-half space, and \emph{void} to indicate the region $V_u$ above the film.

Admissible configurations are those for which the region $A_u\cup B_u$ is confined in the subgraph of a function $h_u$ over the flat substrate (see Figure~\ref{fig:admissibleconfigurations}). As customary in this kind of problems, to focus on the effect of the surface energy on the equilibrium configurations, we work with lateral periodic boundary conditions. We also impose the total volume of the film and the ratio bewteen the two constituent phases by means of two mass constraints.

We consider a sharp-interface model in which the short-range interaction energy $\g(u)$ of a configuration $u$ is assumed to be proportional to the surface measure of the interfaces between the different phases, with possibly different surface tensions. The interfaces involved are: $\Gamma_u^{AB}$ (between the two phases inside the film), $\Gamma^A_u$, $\Gamma^B_u$ (between each phase and the void), and, since also the contact between the film and the substrate costs surface energy, $S^A_u$, $S^B_u$ (between each phase and the substrate), $S_u^V$ (between the substrate and the void).

In addition to the interfacial energy $\g(u)$, we consider in the total energy a general volume-order term $\nl(u)$. For the results contained in this paper, the precise form of this term does not play a role, and the only property that we use is that $\nl(\cdot)$ is Lipschitz continuous with respect to the symmetric difference of sets, namely
\begin{equation} \label{intro:nl-lipschitz}
|\nl(u) - \nl(v)| \leq L_{\nl} \left(\, |A_u\triangle A_v|
	+ |B_u\triangle B_v|  \,\right)
\end{equation}
for some constant $L_{\nl}$.

Then the total energy $\f(u)$ of a regular configuration $u$, whose profile is given by a Lipschitz function $h_u$, writes as
\[
\begin{split}
\f(u) & \coloneqq \g(u) + \gamma\nl(u) \\
& \coloneqq \sigma_A\hn(\Gamma_u^A) + \sigma_B\hn(\Gamma_u^B) + \sigma_{AB}\hn(\Gamma^{AB}_u) \\
&\qquad + \sigma_{AS}\hn(S^A_u) + \sigma_{BS}\hn(S^B_u) + \sigma_S\hn(S^V_u) + \gamma\nl(u)
\end{split}
\]
(see Section~\ref{sect:def} for the precise definition of all the terms involved).

\subsection{Main results}
The paper is divided into two main parts, where we study several properties of the energy $\f$. In the first part we focus on its lower semicontinuity with respect to the $L^1$-topology, and we identify in Theorem~\ref{thm:relaxation} the lower semicontinuous envelope $\overline{\f}$, defined over a larger class of possibly irregular profiles.
In particular, the relaxation procedure allows to consider configurations whose free boundary is described by a function $h_u$ of bounded variation: it then might be unbounded and with jump discontinuities.

The functional $\fbar$ has the same form of the original functional $\f$, namely it is the sum of a surface energy contribution $\overline{\g}(u)$ and of the nonlocal interaction $\gamma\nl(u)$. Notice that the relaxation affects only the surface part of the energy, as the nonlocal term is continuous with respect to $L^1$-convergence. As might be expected, the new surface energy $\overline{\g}$ has relaxed surface tension coefficients, due to the possibility of reducing the energy by inserting a thin layer of a phase between two other phases (wetting). The non-standard aspect of this procedure is that, due to the additional constraints of the model (namely, the only admissible configurations are subgraphs), not all the possible infiltrations are allowed; this prevents us to apply directly the well-known results about the relaxation of surface energy of clusters in $\R^n$, see \cite{AmbBra90}.

Concerning the proof, whereas the liminf inequality follows by a standard argument adapted to our setting (see Proposition \ref{prop:lsc}), the construction of a recovery sequence requires extra care (see Proposition \ref{prop:recovery}). Indeed, first we approximate a non-regular profile by a Lipschitz one, thanks to a construction by Chambolle and Solci \cite{ChaSol07}; next, when one of the surface tension coefficients between two phases changes in the relaxation process, we need to approximate the corresponding interface by carefully inserting thin layers of the other phases, preserving both the graph constraint and the mass constraint.

Finally, the existence of a solution to the mass constrained minimization problem for the relaxed functional
\begin{equation}\label{eq:min_problem_intro}
\min \{ \fbar(u) \,:\, |A_u|+|B_u|=M,\, |A_u|=m \},
\end{equation}
where $0<m<M$, follows by a standard application of the direct method (see Theorem~\ref{thm:existence}).

\medskip
In the second part of the paper we turn our attention to the study of regularity properties of solutions to \eqref{eq:min_problem_intro}.
This is where the main mathematical challenges are, stemming from the fact that admissible competitors have to satisfy the additional condition of being subgraphs. Indeed, if no graph constraint is in force, then partial regularity of minimizing clusters could be obtained by a standard strategy, which would amount to first showing that volume-constrained minimizers are quasi-minimizers of the surface energy, and then to proving an elimination property (see \cite{Leo01}) which allows to reduce locally to the case of only two interfaces. Once this is done, partial regularity follows from classical results (see \cite{GonMasTam83}). In our case, though, we cannot apply directly those results, as they require to make \emph{arbitrary} perturbations, thus possibly exiting the restricted class of admissible configurations.
Therefore, we need to perform delicate geometric constructions, and to combine several ideas in order to prove regularity. 

We next summarize our main strategy.
In Lemma~\ref{lem:penalization} we remove the mass constraints by showing that every solution to \eqref{eq:min_problem_intro} is also a solution to a suitable penalized problem. The proof of this fact follows a rather standard contradiction argument, which amounts to show that if a minimizer of the penalized problem does not satisfy the volume constraint, then it is possible to modify it and reduce its energy - which would be a contradiction - provided that the constant in front of the penalization term is large enough. When there is just one mass constraint and the problem is in the whole space $\R^n$, this can be achieved by a suitable rescaling of the minimizer. A refined argument by Esposito and Fusco \cite{EspFus11} shows that the same can be done by a local perturbation of the set, which brings the mass of the perturbed set closer (but not necessarily equal) to the desired mass, reducing the energy at the same time. However the local variation constructed in \cite{EspFus11}  is \emph{radial}, and is not suitable in our case since the competitor that is constructed in this way might not satisfy the graph constraint. Instead, we perform a local rescaling in the \emph{vertical} direction, so that the perturbed configuration remains the subgraph of an admissible profile and can be used to contradict the minimality of the starting configuration. Another relevant difference is that in our case two mass constraints are in force; we can however avoid the use of the Implicit Function Theorem (used in arguments like that in \cite[Lemma~29.14]{Mag}) and deal with the two constraints one at a time.

The fact that minimizers solve a penalized minimum problem, together with the Lipschitz continuity of the nonlocal energy, immediately implies (see Proposition~\ref{prop:quasimin}) that every solution $u$ to \eqref{eq:min_problem_intro} is a quasi-minimizer of the surface energy $\overline{\g}$, in the sense that there exists $\Lambda>0$ such that
\begin{equation}\label{eq:quasi_minimality_intro}
\overline{\g}(u) \leq \overline{\g}(v) + \Lambda \bigl( |A_u\asymm A_v| + |B_u\asymm B_v| \bigr),
\end{equation}
for all admissible competitors $v$. Notice that in this formulation, admissible competitors do not have to obey the mass constraints, but they still have to satisfy the graph constraint, and thus the regularity of quasi-minimizers does not follow directly from classical results. We denote by $\mathcal{A}_{\Lambda,M}$ the class of quasi-minimizers satisfying the inequality \eqref{eq:quasi_minimality_intro} and with total mass $M$, see Definition~\ref{def:quasimin}.
By using \eqref{eq:quasi_minimality_intro} we then show that $h_u$ is bounded, see Proposition~\ref{prop:truncation}.

The next main result, which is proved in Subsection~\ref{subsection:regularity2} through a series of propositions, concerns the regularity of quasi-minimizers in dimension $n=2$. In view of the previous discussion, it applies in particular to any solution of the minimum problem \eqref{eq:min_problem_intro}.

\begin{theorem}[Partial regularity in dimension $n=2$] \label{thm:regularity}
Assume that $n=2$, and that the surface tension coefficients satisfy the strict triangle inequalities 
\begin{equation}\label{eq:coeff_triangle_inequality}
\sigma_{AB} < \sigma_A + \sigma_B, \quad\quad
\sigma_{A} < \sigma_B + \sigma_{AB}, \quad\quad
\sigma_{B} < \sigma_A + \sigma_{AB}.
\end{equation}
Let $u\in\mathcal{A}_{\Lambda,M}$ be a quasi-minimizer, according to Definition~\ref{def:quasimin}. Then the followings hold.
\begin{enumerate}
\item\label{item1-reg} \emph{(Infiltration)} There exists $\e_0>0$ (depending only on $M$, $\Lambda$, and the surface energy coefficients) such that, for any square $\q_r(z_0)$ centered at $z_0\in\R^n$ with side length $r\in(0,1)$, the following implications hold:
\[
|V_u\cap \q_r(z_0)| < \e_0 r^2 \quad\quad
\Rightarrow
\quad\quad
|V_u\cap \q_{\frac{r}{2}}(z_0)| = 0,
\]
and, if $\q_r(z_0)$ does not intersect the substrate,
\[
|(A_u\cup B_u) \cap \q_r(z_0)| < \e_0 r^2 \quad\quad
\Rightarrow
\quad\quad
|(A_u\cup B_u) \cap \q_{\frac{r}{2}}(z_0)| = 0.
\]

\item\label{item2-reg} \emph{(Lipschitz regularity of the graph)} There exists a finite set $\Sigma$, containing the jump points of $h_u$, such that $h_u$ is locally Lipschitz outside $\Sigma$. 

\item\label{item3-reg} \emph{(Singular set)} At the upper end of a jump point of $h_u$, the graph has a vertical tangent. At the points of $\Sigma$ that are not jump points of $h_u$, the left or the right derivative of $h_u$ is infinite. The graph of $h_u$ does not contain interior or exterior cusps.

\item\label{item4-reg} \emph{(Internal regularity of $\Gamma^{AB}_u$)} For every $\alpha\in(0,1/2)$ the interface $\partial A\cap\partial B$ is a locally a $C^{1,\alpha}$-curve in $\{(x,y)\in\R^2 : 0<y<h_u(x)\}$.

\item\label{item5-reg} \emph{($C^{1,\alpha}$-regularity of the graph)} If $x_0\notin\Sigma$ is such that $(x_0,h_u(x_0))\in\partial^*A\cup\partial^*B$, then $h_u$ is of class $C^{1,\alpha}$ in a neighbourhood of $x_0$, for every $\alpha\in(0,1/2)$.
\end{enumerate}
\end{theorem}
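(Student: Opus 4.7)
My plan is to prove the five items in the order stated, through a series of propositions, always exploiting the quasi-minimality inequality \eqref{eq:quasi_minimality_intro} against competitors engineered to respect the subgraph constraint. Since classical comparison sets (balls, rotated boundaries, free reflections) easily violate that constraint, every perturbation has to be built by \emph{vertical} modifications of the phases in thin vertical strips.

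For item \eqref{item1-reg} I would run a De Giorgi-type iteration. For the void case, fix $z_0$ and, for almost every $s\in (r/2,r)$, compare $u$ with the competitor obtained by raising $h_u$ inside $\q_s(z_0)$ so as to fill every vertical column of $\q_s(z_0)\cap V_u$ with the phase immediately below. This competitor is automatically admissible, and the induced surface-energy change is controlled, via the coarea formula applied on the vertical strip $\partial\q_s(z_0)$, by the $\hu$-measure of $V_u$ there. Combined with \eqref{eq:quasi_minimality_intro} this produces a De Giorgi-type differential inequality for $f(s)=|V_u\cap\q_s(z_0)|$, giving $f(r/2)=0$ once $f(r)\leq\e_0 r^2$ with $\e_0$ small. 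The film-side statement is symmetric: when $\q_r(z_0)$ is disjoint from the substrate, the analogous downward push converts film into void with the same quantitative control.

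For \eqref{item2-reg} and \eqref{item3-reg}, the infiltration estimate provides uniform density lower bounds for each phase and for the void; combined with the boundedness of $h_u$ (Proposition~\ref{prop:truncation}) and the finiteness of $\overline{\g}(u)$, this forces the set $\Sigma$ of jump points and of points with infinite one-sided derivative to be finite. Away from $\Sigma$, the density bounds trap the graph inside a cone of fixed aperture, yielding local Lipschitz regularity. Interior and exterior cusps are excluded by cutting the cusp with a short horizontal segment: the $\hn$-gain is linear in the cusp width $\ell$ while the mass change is quadratic, contradicting quasi-minimality for small $\ell$. The vertical-tangent statement at jumps comes from the same type of comparison applied to overhangs: any non-vertical tangent at the top of a jump could be shaved off to strictly decrease energy.

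For \eqref{item4-reg}, inside the open region $\{0<y<h_u(x)\}$ the graph constraint is inactive under small perturbations, so $A_u$ is a $\Lambda$-quasi-minimizer of the multi-phase perimeter with coefficients satisfying \eqref{eq:coeff_triangle_inequality}; the strict triangle inequalities exclude wetting layers, so near each point of $\partial A\cap\partial B$ only two phases appear and the classical Tamanini theory \cite{GonMasTam83} gives $C^{1,\alpha}$ regularity for $\alpha<1/2$. For \eqref{item5-reg}, at $x_0\notin\Sigma$ the local Lipschitz bound on $h_u$ lets me flatten the graph by a bi-Lipschitz change of coordinates, reducing the claim to boundary regularity of a quasi-minimizer against a Lipschitz obstacle and hence to $C^{1,\alpha}$ regularity of $h_u$ near $x_0$. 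The main obstacle throughout is precisely the graph constraint, which forbids the symmetric-difference competitors of De Giorgi/Tamanini theory and forces every step to be redone with the vertical-strip constructions above; a second delicate point is invoking the strict triangle inequalities exactly where two-phase regularity is claimed, in order to rule out wetting.
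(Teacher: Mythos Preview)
Your overall strategy is sensible, and item~(iv) and the void half of~(i) are close to the paper's approach. However, there is a genuine gap in your treatment of (ii)--(iii), and related weaknesses in the film half of~(i) and in~(v).

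For Lipschitz regularity, you claim that ``density bounds trap the graph inside a cone of fixed aperture''. This does not follow: uniform two-sided density estimates along $\Gamma_{h_u}$ (which is indeed what infiltration yields) do not imply any cone condition, and hence do not give local Lipschitz regularity of $h_u$; density bounds for sets of finite perimeter are far weaker than a Lipschitz graph condition. Likewise, finiteness of the energy and boundedness of $h_u$ do not by themselves force $J_{h_u}$ to be finite --- a one-variable BV function can have countably many jumps. The paper fills this gap with a different geometric tool, an \emph{interior ball condition} (Proposition~\ref{prop:innerball}): by comparing $u$ with the configuration obtained by removing the region between the graph and an arc of a disk tangent to it from below, one shows that any ball of radius $\rho_0<\min\{\sigma_A,\sigma_B\}/\Lambda$ contained in the subgraph can touch $\Gamma_{h_u}^\#$ at most once. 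This is strictly stronger than a density estimate and, via the argument of Chambolle--Larsen \cite{ChaLar03}, it does give that $\Sigma$ is finite and $h_u$ is locally Lipschitz off $\Sigma$. Your cusp-cutting idea is a fragment of this comparison (a chord instead of an arc), but it is not developed into the uniform ball condition that (ii) actually requires.

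Two smaller points. For the film half of~(i), your ``downward push'' is not an admissible competitor when $\Omega_{h_u}$ extends below $\q_r(z_0)$: lowering $h_u$ on those columns removes film outside the cube as well, over which you have no smallness control. The paper instead first proves infiltration in semi-infinite vertical strips (where the obvious filling competitor respects the graph constraint) and then shows that small volume in a cube forces small volume in the corresponding strip; alternatively the film infiltration follows from the interior ball condition. For~(v), ``boundary regularity of a quasi-minimizer against a Lipschitz obstacle'' after bi-Lipschitz flattening is not a result you can invoke directly, since the graph constraint persists under the change of variables. The paper's route is to prove a local elimination property for $A_u$ and $B_u$ separately at graph points (using the Lipschitz bound from~(ii) to confine the graph in a rectangle whose horizontal sides miss $\Gamma_{h_u}$); once only one film phase and the void remain locally, a vertical rearrangement argument as in \cite[Lemma~14.7]{G} upgrades the constrained quasi-minimality to classical quasi-minimality of the perimeter, and standard regularity applies.
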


Conditions \eqref{eq:coeff_triangle_inequality} are known to be needed in order to get regularity for minimizing clusters (see \cite{Leo01, Whi96}). Indeed, consider the simple case of a flat interface between the phase $A$ and the void $V$: if for instance one had $\sigma_{A}=\sigma_{B}+\sigma_{AB}$, then it would be energetically equivalent to insert a thin layer of the phase $B$ between $A$ and $V$, in such a way that these two phases do not touch anymore. In other words, the \emph{strict} triangular inequalities are natural conditions to prevent small infiltrations between pair of phases.

The elimination property is well-known in the case of minimal clusters (see \cite{Leo01}). The idea of the proof is to construct a suitable competitor by \emph{filling} the minority phase in $\mathcal{Q}_r(z_0)$ with one of the other phases. Again, in our case filling $A_u$ or $B_u$ by $V_u$ might lead to a configuration which violates the graph constraint. Therefore, the proof of the infiltration for $V_u$ (Proposition~\ref{prop:infiltrationV}) and for $A_u\cup B_u$ (Proposition~\ref{prop:infiltrationAB}) uses a two step strategy: first, we prove the elimination property in a semi-infinite strip, where it is possible to fill $A_u\cup B_u$ with $V_u$, without violating the graph constraint; then, we show that a minimal configuration having small volume percentage of the void (or of the subgraph) in a cube must necessarily have a small volume percentage of the same in the semi-infinite strip, so that it is possible to conclude by using the first step.

The proof of the Lipschitz regularity follows an idea by Chambolle and Larsen \cite{ChaLar03} (see also \cite{FonFusLeoMor07,FusMor12}): we show an \emph{interior ball condition} (see Proposition~\ref{prop:innerball}), namely that there exists a uniform radius $\rho_0>0$ such that, for each $z$ on the graph of $h_u$, it is possible to find a ball with radius $\rho_0$ tangent to the graph of $h_u$ only at the point $z$ and contained in the subgraph of $h_u$. This property implies (Proposition~\ref{prop:lipschitz}) that $h_u$ has only a finite number of jump points, and that $h_u$ is locally Lipschitz continuous outside a finite set (where the inner ball is tangent to the graph horizontally).

Since in two dimensions the graph $h_u$ is closed, for each point $z$ on the internal interface between the two phases it is possible to find a ball centered at $z$ that does not intersect the graph, nor the substrate. Therefore, since internal interfaces do not have any graph constraint to satisfy, their $C^{1,\alpha}$-regularity follows from classical results (see Remark~\ref{rem:interior_regularity}).

Finally, the proof of the $C^{1,\alpha}$ regularity of the graph (Proposition~\ref{prop:C1alpha_graph}) is also based on an elimination property for the two sets $A_u$, $B_u$ separately. To obtain this, we observe that thanks to the Lipschitz regularity of $h_u$, for every point $(x_0,h_u(x_0))\in\partial^*A\cup\partial^*B$ with $x_0\not\in\Sigma$ we can find a rectangle such that the graph of $h_u$ does not intersect its upper and lower sides. This property allows to perform a local perturbation which preserves the graph constraint.

\subsection{Application to thin films of diblock copolymers} \label{subsection:thinfilms}

We now discuss a possible application of the variational model considered here for the description of the morphology of optimal or equilibrium configurations of thin films of diblock copolymers \blu{under some additional assumptions that will be discussed later.}

Block copolymers are an important class of soft materials (see \cite{BatFre99}). They are composed by chemically bonded linear chains of monomers. The competition between the repulsion among different subchains and the entropy cost associated with chain stretching is the mechanism behind the extraordinary self-assembly property of block copolymers, that leads to the creation of fascinating patterns exhibiting interesting periodicity properties (see \cite{ThoAndHenHof88}).

When block copolymers are constrained in a thin film, the landscape of observed configurations can be significantly different from that of the bulk case, due to the influence of film surfaces and the interactions of the blocks with the interfaces.
It is indeed observed that in the vicinity of an external interface the microdomains tend to align parallel to that surface \cite{Fre87}. As noted in the physical literature, ``\emph{as film thickness decreases, a regime may be encountered where the constraining effects of both interfaces are felt throughout the film and a transition from the bulk, 3D morphology to a 2D thin film morphology may result}'' \cite{RadCarTho96}.

An important distinction must be made between \emph{unconfined films} supported by a solid, flat substrate, where one interface of the film is free, and \emph{confined films}, where the copolymers and constrained between two hard walls with a fixed thickness.
The behaviour in these cases is usually illustrated (see \cite{Mat97}) by considering symmetric diblock copolymers, where the preferred bulk configuration is lamellar: in this case, the copolymer tends to form multilayered structures of lamellae parallel to the interface, in which each period ($L$) consists of two monolayers. This induces a quantization of the film thickness, which is forced to be a multiple of the natural spacing of the lamellae $H\approx\frac{kL}{2}$, with $k$ even if the upper and lower surfaces have an affinity for the same component of the diblock copolymer, and $k$ odd if the two surfaces have opposite affinities. However, when the film thickness $H$ and the natural spacing $L$ are not commensurate, this causes compression of the chain of polymers, namely stress in the film, that in the unconfined case is released by locally modifying the thickness of the profile by forming terraces (see Figure~\ref{fig:formation}, top-right), islands and holes (see Figure~\ref{fig:formation}, bottom-left); in the confined case, the frustration is relieved by changing the orientation of the lamellae (see Figure~\ref{fig:formation}, bottom-right).

Besides lamellar configurations, other structures have been observed for asymmetric block copolymers, like for instance spherical \cite{YokMatKra00} or cylindrical \cite{VanVan95} mesophases, which show the same phenomena of thickness quantization or change in orientation of the microdomains. See also \cite{Dar07} for a review of the possible phases that have been observed and a discussion of their many applications, and \cite[Figure~4]{HuaZhuMan21} for an illustration of the phase diagram in the case of confined films, showing twenty different morphologies depending on the volume fraction and on the film thickness. The possibility of accessing a larger class of equilibrium configurations has been exploited for many applications (see \cite{Seg05}), ranging from litography to mass transport.
\blu{Patterns in thin films of block copolymers have been investigated numerically (see, for instance, \cite{HillMill17,LyaHorZveSev06,Mat97,Mat98,ParKayMun,Sta12}).}

\begin{figure}
	\centering
	\includegraphics[scale=0.7]{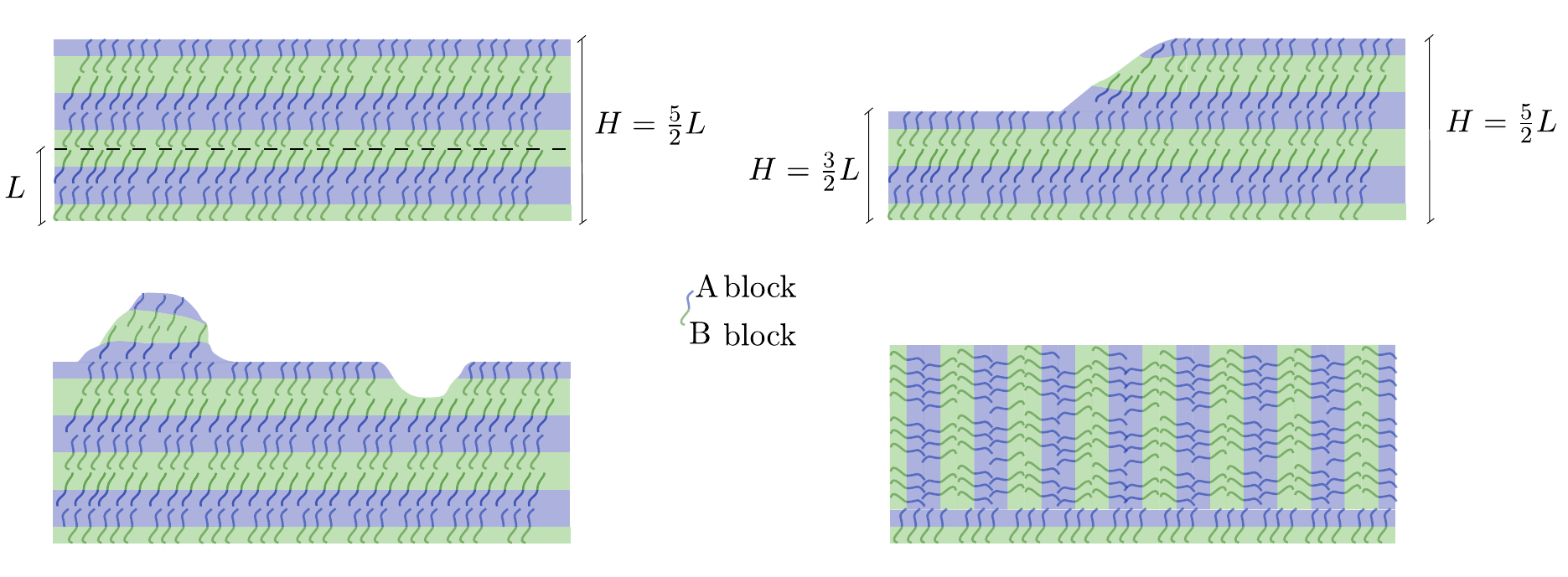}
	\caption{Schematic cartoon of three different ways in which a film of symmetric diblock copolymers can release the stretching stress caused by imposing an initial thickness $H$ (see top-left) that is not commensurate with the natural spacing of lamellae $L$.}
	\label{fig:formation}
\end{figure}

\medskip

Mathematical models aimed at describing the behaviour of block copolymers from physics and chemistry can be roughly divided into two categories: (self-consistent) mean fields models (see, for instance, \cite{MatBat96,MatSch94}) and density functional theory models.
A celebrated mean field model for block copolymers was derived by Ohta and Kawasaki in \cite{OhtaKaw86} for the case of diblock copolymers (two monomers) in the strong segregation regime by using several approximations (infinite temperature and thermodynamic limit). It has successfully been used to derive qualitative properties related to both the dynamics and the statics of diblock copolymers. In mathematical terms, the Ohta-Kawasaki is a phase-field model given by the sum of a Cahn-Hilliard-type functional (replaced by a perimeter term in the sharp-interface version) and a nonlocal interaction term.
\mb{
By using a notation similar to the one implemented above, such an energy can be written in the form
\begin{equation} \label{intro:OK}
\sigma_{AB}\hn(\Gamma^{AB}_u) + \gamma\nl(u),
\end{equation}}
where the first term models the short-range interaction between different monomers, related to the surface energy of the interfaces dividing the regions of high concentration of the two monomer species, while the second represents their long-range interaction. The emergence of highly nontrivial pattern configurations at a mesoscopic scale is precisely due to the competition between these two kinds of energies.

\mb{The model considered in this paper can be viewed as a variant of the Ohta-Kawasaki model suitable to describe thin films of diblock copolymers in the unconfined case. The two regions $A_u$, $B_u$ of an admissible configuration $u$ represent the two phases of the diblock copolymers, and are confined by a solid, flat substrate on the bottom side, while the upper surface is exposed.
As a volume-order term $\nl(u)$ in the total energy, we consider a long-range interaction responsible for the repulsion force between different monomers, and thus acting only on the two sets $A_u$ and $B_u$ - see Example~\ref{example:nlenergy} for the precise definition of $\nl(u)$. The region above the film (which could be void, air or a liquid solvent) is modeled as a homopolymer, in the framework of the density functional theory for blends of diblock copolymers with homopolymers derived by Choksi and Ren \cite{ChoRen05} (see also \cite{BonKnu16, GenPel08, GenPel09} for related studies in the mathematical literature), and it only interacts with the diblock copolymer via the surface energies.}

As it can be seen by looking \mb{at the Ohta-Kawasaki energy \eqref{intro:OK}}, surface effects with the exterior are usually neglected in models for block copolymers in the bulk, since they are of several orders of magnitude lower than the other effects considered.
When confined in thin films, though, the surface interactions of the two phases with the substrate and with the air (i.e., the additional terms in the energy $\f$ \mb{compared with \eqref{intro:OK}}) become important. This is how, at least heuristically, the change in the energy landscape is justified in the physics literature.
Under the additional assumption that the configurations of interest can be described by a graph over the substrate, the model consider in this paper could be of help in the study of such a class of equilibrium stable configurations of block copolymers confined in thin films.
We would like to thank the anonymous referees for pointing out that this latter additional assumption is not easily justified from the physical point of view. Indeed, despite the fact that in the physical literature authors refer to the \emph{thickness} of the film, this does not exclude the possibility of having a film with \emph{holes}, or arranging with \emph{tubes} that violate the graph constrain. We were not able to find any paper, either in the physical or in the experimental literature that clearly disregard such possibility.

Finally, we would like to point out that our model is not a dimension reduction model, like that investigated in \cite{DeSKohMueOtt}.

\subsection{Remarks}
We conclude this introduction with a few more remarks.
The extension to the case of more than two phases is relatively straightforward and the arguments presented here can be directly generalized, at the price of a more demanding notation and of a larger number of different cases to be taken into consideration. It could also be possible to extend our results to different kinds of boundary conditions, or if surface interactions with horizontal walls are presents.

The proofs of the results in this paper follow several well known arguments used to treat similar problems and most of the techniques are fairly standard. However, the implementation of such ideas in our context, where the graph constraint is in force, poses several additional challenges, mainly due to the need of adapting, and in some cases significantly modifying, the construction of suitable competitors. This is particularly relevant for the construction of a recovery sequence (Proposition~\ref{prop:recovery}), the local deformation map used in the proof of the penalization argument (Lemma~\ref{lem:penalization}), and the elimination-type properties (Proposition~\ref{prop:infiltrationV} and Proposition~\ref{prop:C1alpha_graph}).

The generalization to higher dimensions of the two-dimensional regularity theory for quasi-minimal partitions subject to a graph constraint, developed in Section~\ref{subsection:regularity2}, is not straightforward and would require new ideas: while we believe that the elimination property might be obtained by refined but similar arguments, the inner ball condition leading to the Lipschitz regularity of the graph is a purely two-dimensional strategy.

Future directions of investigation are the following: firstly, as discussed above, the extension of the regularity results to higher dimensions (in particular in the physical dimension three), and the investigation of finer regularity properties in two dimension; secondly, a description of optimal configurations by means of the Euler-Lagrange equations satisfied by a minimizer, which involve an interplay between the curvature and the nonlocal potential on the regular parts of the interfaces; thirdly, an analysis of the possible singularities (jump points, points where three different interfaces meet, and points where an interface meets the substrate), in particular by deriving rigorously Young's law for the triple points. Finally, an ambitious task would be to study specific configurations (for instance lamellar patterns with terrace formations) and investigate their stability properties, possibly by means of second variation arguments.

\medskip\noindent\textbf{Structure of the paper.}
The paper is organized as follows. In Section~\ref{sect:def} we introduce the main notation, the class of admissible configurations and the total energy of the system. In Section~\ref{sect:relax} we compute the relaxation of the energy (Theorem~\ref{thm:relaxation}) and we use this result to prove the existence of minimizing configurations (Theorem~\ref{thm:existence}). In Section~\ref{sect:regularity} we first show that solutions to the minimum problem \eqref{eq:min_problem_intro} are quasi-minimizers of the surface energy under a graph constraint (Subsection~\ref{subsection:penalization}), and then we prove Theorem~\ref{thm:regularity} on the regularity of quasi-minimizers in dimension two (Subsection~\ref{subsection:regularity2}).


\section{The model} \label{sect:def}


\subsection{Notation for functions of bounded variation and perimeters}
The profile of the film will be modeled by the (generalized) graph of a periodic function with finite total variation in $(0,L)^{n-1}$ ($n\geq2$), where $L>0$ is a fixed parameter, and its subgraph will represent the reference configuration of the film. We therefore firstly recall a few notions from the theory of BV-functions (see \cite{AFP}), in order to fix the notation used in the paper. Given $h\in L^1_{\loc}(\Omega)$, where $\Omega\subset\R^{m}$ is an open set ($m\geq1$), its total variation is defined as
\begin{equation*}
|Dh|(\Omega) \coloneqq \sup\biggl\{ \int_{\Omega}h\,\div\phi\de x \,:\, \phi\in C^\infty_{\mathrm c}(\Omega;\R^{m}),\, |\phi|\leq 1 \biggr\} \,,
\end{equation*}
and this quantity is finite if and only if the distributional derivative $Dh$ of $h$ is a bounded Radon measure on $\Omega$. We let $\BV(\Omega) \coloneqq \{ h\in L^1(\Omega) \,:\, |Dh|(\Omega)<\infty \}$. If $h\in\BV(\Omega)$, at each point $x\in\Omega$ the approximate upper and lower limits
\begin{equation} \label{h+-}
\begin{split}
h^+(x) &\coloneqq \inf\biggl\{ t\in\R \,:\, \limsup_{\rho\to0}\frac{ \leb^m( \{ h>t\}\cap B_\rho(x) )}{\omega_m\rho^m} = 0 \biggr\} \,,\\
h^-(x) &\coloneqq \sup\biggl\{ t\in\R \,:\, \limsup_{\rho\to0}\frac{ \leb^m( \{ h<t\}\cap B_\rho(x) )}{\omega_m\rho^m} = 0 \biggr\}
\end{split}
\end{equation}
are well-defined, where $\leb^m$ is the $m$-dimensional Lebesgue measure, $B_\rho(x)\subset\R^m$ is the ball centered at $x$ with radius $\rho$, and $\omega_m=\leb^m(B_1(0))$. The \emph{jump set} of $h$ is then defined as the set
\begin{equation} \label{jump}
J_h \coloneqq \{ x\in\Omega \,:\, h^-(x)<h^+(x)\}\,,
\end{equation}
and it is well-known that $J_h$ is a $(\mathcal{H}^{m-1},m-1)$ rectifiable set, with normal $\nu_h(x)$ at $\mathcal{H}^{m-1}$-a.e. point $x\in J_h$.

We also recall that a set $E\subset\Omega$ has finite perimeter in $\Omega$ if $|D\chi_E|(\Omega)<\infty$, where $\chi_E(x)=1$ if $x\in E$, $\chi_E(x)=0$ if $x\notin E$; the \emph{perimeter} of $E$ in $\Omega$ is then defined as
\begin{equation} \label{perimeter}
\per(E;\Omega) \coloneqq |D\chi_E|(\Omega) \,.
\end{equation}
We introduce the \emph{essential boundary} of $E$
\begin{equation} \label{essboundary}
\partial_eE \coloneqq \Omega\setminus (E^0\cup E^1),
\end{equation}
where, for $t\in[0,1]$, $E^t$ denotes the set of points where $E$ has Lebesgue density $t$.
Another relevant subset of the boundary of a set of finite perimeter is the \emph{reduced boundary} $\partial^*E$ (see \cite{AFP}). At every point of the reduced boundary the measure-theoretic outer normal $\nu_E$ is defined, the Lebesgue density of $E$ is equal to $1/2$, and it is well-known that $\partial_eE$ coincides with $\partial^*E$ up to a $\mathcal{H}^{m-1}$-negligible set. We finally recall that a \emph{Caccioppoli partition} of $\Omega$ is a finite partition $\{E_i\}_{i\in\{1,\ldots,N\}}$ of $\Omega$, $N\in\N$, such that $\sum_{i=1}^N\per(E_i;\Omega)<+\infty$. For a Caccioppoli partition $\{E_i\}_{i}$ , $\mathcal{H}^{m-1}$-a.e. point of $\Omega$ belongs to one of the sets $(E_i)^1$ or to one of the intersections $\partial^*E_i\cap\partial^*E_j$ ($i\neq j$).


\subsection{Admissible configurations}
We now describe the class of admissible configurations. Throughout the paper, we will denote by $x=(x',x_n)$ the generic point in $\R^n\equiv\R^{n-1}\times\R$, and by $\R^n_+\coloneqq \R^{n-1}\times[0,\infty)$. The canonical basis of $\R^{n}$ will be denoted by $(e_1,\ldots,e_{n})$, and the Lebesgue measure on $\R^n$ by $|\cdot|\coloneqq\leb^n(\cdot)$. Given $L>0$, we also set
\begin{equation} \label{QL}
Q_L\coloneqq [0,L)^{n-1} \subset\R^{n-1}, \qquad Q_L^+\coloneqq Q_L\times[0,+\infty).
\end{equation}

We assume that the substrate occupies the infinite region
\begin{equation} \label{substrate}
S\coloneqq\R^{n-1}\times(-\infty,0).
\end{equation}
We introduce the class of \emph{admissible profiles}
\begin{equation} \label{AP}
\begin{split}
\ap(Q_L) \coloneqq &\Bigl\{ h:\R^{n-1}\to[0,+\infty) \,:\, h\in\BV_{\loc}(\R^{n-1}), \, h \text{ is $Q_L$-periodic} \Bigr\} \,.
\end{split}
\end{equation}
The reference configuration of the film is represented by the subgraph of an admissible profile $h\in\ap(Q_L)$: we denote it and its periodic extension by
\begin{equation} \label{omega}
\begin{split}
\Omega_h &\coloneqq \Bigl\{ (x',x_n)\in Q_L\times\R \,:\, 0<x_n<h(x') \Bigr\}\,, \\
\Omega_h^\# &\coloneqq \Bigl\{ (x',x_n) \in \R^{n-1}\times\R \,:\, 0<x_n<h(x') \Bigr\} \,,
\end{split}
\end{equation}
respectively. Notice that, as $h$ has finite total variation, the set $\Omega_h$ has finite perimeter.
We also define, for $h\in\ap(Q_L)$, the \emph{free profile}
\begin{equation} \label{gamma}
\Gamma_h \coloneqq \Bigl\{ (x',x_n)\,:\,  x'\in Q_L,\, h^-(x')\leq x_n \leq h^+(x') \Bigr\} \,,
\end{equation}
and we denote by $\Gamma_h^\#$ its periodic extension.
Notice that if $0<x_n<h^-(x')$ then $x\in(\Omega_h^\#)^1$, while if $x_n>h^+(x')$ then $x\in(\Omega_h^\#)^0$; therefore $\partial_e(\Omega_h^\#\cup S)$ is a subset of $\Gamma_h^\#$ (and coincides with $\Gamma_h^\#$ up to a $\mathcal{H}^{n-1}$-negligible set).

The region $\Omega_h$ occupied by the film is \rosso{partitioned into} two disjoint sets of finite perimeter $A$, $B$ representing the two phases of the system. We identify these two phases with the level sets of a marker function $u:\Omega_h\to\{\pm1\}$ with bounded variation, so that $A=\{u=1\}$ and $B=\{u=-1\}$. As $\Omega_h$ is in general not an open set, it will be convenient to consider $u$ as a piecewise constant function defined in the full space $\R^n$, taking two additional values $u=0$ and $u=2$ in the region above the film and in the substrate, respectively.
This is made precise by the following definition.

\begin{definition}[Admissible configurations] \label{def:admissibleconfig}
Let $I\coloneqq\{\pm1,0,2\}$. The class $\ac$ of \emph{admissible configurations} is the space of functions $u:\R^n\to I$ satisfying the following properties:
\begin{enumerate}
	\item $u\in\BV_{\loc}(\R^n;I)$,
	\item $u(x'+Le_i,x_n)=u(x',x_n)$  for all $(x',x_n)\in\R^n$, $i=1,\ldots,n-1$,
	\item there exists $h_u\in\ap(Q_L)$ such that $\Omega_{h_u}^\#=\{u=1\}\cup\{u=-1\}$,
	\item $S=\{u=2\}$, where $S$ is the substrate defined in \eqref{substrate}
\end{enumerate}
(the previous identities have to be understood in the almost everywhere sense with respect to $\leb^n$).
The class of \emph{regular admissible configurations} is defined as
\begin{equation} \label{acreg}
\acreg \coloneqq \Bigl\{ u\in\ac \,:\, h_u \text{ is Lipschitz continuous} \Bigr\} .
\end{equation}
We consider the space $\ac$ endowed with the $L^1$-convergence: we say that a sequence $\{u_k\}_{k\in\N}\subset\ac$ converges in $\ac$ to $u\in\ac$ if $u_k\to u$ in $L^1(Q_L^+)$.
\end{definition}

Given an admissible configuration $u\in\ac$, we have a partition of the strip $Q_L^+$ into three sets of finite perimeter, which will be denoted by
\begin{equation} \label{ABV}
A_u\coloneqq\{ u= 1\}\cap Q_L^+, \qquad B_u\coloneqq\{ u=-1\}\cap Q_L^+, \qquad V_u\coloneqq\{u=0\}\cap Q_L^+,
\end{equation}
and as usual we will denote by $A_u^\#$, $B_u^\#$ and $V_u^\#$ their periodic extensions.
Notice that $A_u\cup B_u=\Omega_{h_u}$ and $\partial^*V_u^\#=\Gamma_{h_u}^\#$ (up to a $\hn$-negligible set).
In other words, the admissible configurations are just periodic partitions of the upper half-space into three sets of locally finite perimeter $A$, $B$, $V$, with the constraint that $A\cup B$ is the subgraph of a $\BV$-function. The jump set $J_u$ of $u$ coincides (up to a $\hn$-negligible set) with the union of their reduced boundaries:
\begin{equation} \label{interfaces0}
\hn\biggl( J_u\setminus \bigcup_{\substack{i,j\in I\\i\neq j}}\partial^*\{u=i\} \cap \partial^*\{u=j\} \biggr) = 0,
\end{equation}
with $(u^+(x),u^-(x)) = (i,j)$ (up to a permutation) for every $x\in\partial^*\{u=i\}\cap\partial^*\{u=j\}$.

As we want to consider different values of the surface tension for all the possible different interfaces between the phases, it is convenient to introduce the following notation (see Figure~\ref{fig:admissibleconfigurations}):
\begin{equation} \label{interfaces1}
\Gamma_u^A \coloneqq \Gamma_{h_u}\cap \partial^*A_u^\#\,, \qquad \Gamma_u^B \coloneqq \Gamma_{h_u}\cap \partial^*B_u^\#\,, \qquad \Gamma^{AB}_u \coloneqq \partial^*A_u^\#\cap\partial^*B_u^\#\cap Q_L^+\,,
\end{equation}
and
\begin{equation} \label{interfaces2}
S^A_u \coloneqq \partial^*A_u\cap\bigl(Q_L\times\{0\}\bigr), \quad S^B_u \coloneqq \partial^*B_u\cap\bigl(Q_L\times\{0\}\bigr), \quad S^V_u\coloneqq \Gamma_{h_u}\cap\bigl(Q_L\times\{0\}\bigr).
\end{equation}
The set $S^V_u$ represents the possible region in which the substrate is exposed. In view of \eqref{interfaces0}, the disjoint union of these interfaces coincides with the jump set $J_u$ of $u$ inside the periodicity strip:
\begin{equation} \label{interfaces3}
J_u\cap Q_L^+ = \Gamma_u^A\cup\Gamma_u^B\cup\Gamma^{AB}_u\cup S^A_u\cup S^B_u\cup S^V_u \cup N
\end{equation}
with $\hn(N)=0$.


\subsection{The energy of regular configurations}
We now introduce the energy associated with a regular configuration $u\in\acreg$. This energy will be extended to the whole space $\ac$ of admissible configurations in Section~\ref{sect:relax} via a relaxation procedure. \rosso{The total energy is the sum of a surface penalization of the interfaces between the phases, with possibly different surface tension coefficients, and a general volume-order, possibly nonlocal, energy $\nl(u)$. Here and the rest of the paper we will always assume that $\nl:\ac\to[0,+\infty)$ is a given function satisfying the following property: given a positive constant $M>0$, there exists $L_{\nl}\in(0,+\infty)$, depending on $M$, such that for all $u,v\in\ac$ with $|\Omega_{h_u}|,|\Omega_{h_v}|\leq M$ it holds
\begin{equation} \label{eq:ass-nl}
|\nl(u) - \nl(v)| \leq L_{\nl} \left(\, |A_u\triangle A_v| + |B_u\triangle B_v|  \,\right).
\end{equation}
Under the previous assumption, we can define the energy of a regular configuration as follows.}

\begin{definition}[Energy] \label{def:energy}
Given positive coefficients $\sigma_A, \sigma_B, \sigma_{AB}, \sigma_{AS}, \sigma_{BS}, \sigma_S,\gamma>0$, we define the total energy of a regular configuration $u\in\acreg$ as
\begin{equation} \label{energy}
\begin{split}
\f(u) &\coloneqq \sigma_A\hn(\Gamma_u^A) + \sigma_B\hn(\Gamma_u^B) + \sigma_{AB}\hn(\Gamma^{AB}_u) + \gamma\,\nl(u) \\
&\qquad + \sigma_{AS}\hn(S^A_u) + \sigma_{BS}\hn(S^B_u) + \sigma_S\hn(S^V_u) \,.
\end{split}
\end{equation}
\end{definition}

By introducing the surface energy density
\begin{equation} \label{density}
\Psi(i,j)\coloneqq
\begin{cases}
\sigma_A & \text{if } (i,j)=(1,0),\\
\sigma_B & \text{if } (i,j)=(-1,0),\\
\sigma_{AB} & \text{if } (i,j)=(1,-1),\\
\sigma_{AS} & \text{if } (i,j)=(1,2),\\
\sigma_{BS} & \text{if } (i,j)=(-1,2),\\
\sigma_{S} & \text{if } (i,j)=(0,2),
\end{cases}
\end{equation}
with $\Psi(i,j)=\Psi(j,i)$, we can write in a more compact notation an equivalent representation of the energy in terms of the jump set of the piecewise constant function $u$ (see \eqref{interfaces3}):
\begin{equation} \label{energy2}
\begin{split}
\f(u) = \int_{J_u\cap Q_L^+} \Psi(u^+,u^-)\de\hn + \gamma\,\nl(u).
\end{split}
\end{equation}

\rosso{
\begin{example}[Thin films of diblock copolymer] \label{example:nlenergy}
As discussed in the Introduction, a possible application of the variational model introduced above is in the description of thin films of diblock copolymers. In this case, the sets $A_u$ and $B_u$ associated with an admissible configuration $u\in\ac$ represent the two phases occupied by the diblock copolymer, and $V_u$ represents the void (or homopolymer) above the film.

Following \cite{ChoRen05} and modeling the phase $V_u$ as an homopolymer, one can introduce a nonlocal interaction energy $\nl(u)$ between the two phases $A_u$, $B_u$ as follows. For $u\in\ac$ we let $\bar{u}\coloneqq\int_{Q_L^+}u(x)\de x= |A_u|-|B_u|$, and we define
\begin{equation} \label{nlenergy}
\nl(u) \coloneqq \int_{Q_L^+} |\nabla \phi_u(x)|^2\de x \,,
\end{equation}
where the potential $\phi_u:Q^+_L\to\R$ associated to the configuration $u\in\ac$ is the solution to
\begin{equation*} \label{potential1}
-\Delta \phi_u = u-\bar{u} \quad\text{in }Q_L^+, \qquad\qquad \int_{Q_L^+}\phi_u(x)\de x=0,
\end{equation*}
with periodic boundary conditions on the lateral boundary $\partial Q_L\times(0,+\infty)$ and zero Neumann boundary condition at the interface $Q_L\times\{0\}$ with the substrate. By arguing as in \cite[Lemma~2.6]{AceFusMor13}, one can check that $\nl(u)$ obeys the assumption \eqref{eq:ass-nl}.
\end{example}}


\section{Relaxation and existence of minimizers} \label{sect:relax}

The goal of this section is to compute the lower semicontinuous envelope $\fbar$ of the functional $\f$ with respect to the convergence in $\ac$, under a volume constraint: for every $u\in\ac$
\begin{multline} \label{relaxation}
\fbar(u) \coloneqq \inf\,\Bigl\{ \liminf_{k\to\infty}\f(u_k)\,:\, u_k\in\acreg,\, |A_{u_k}|=|A_u|,\, |B_{u_k}|=|B_u|,\, u_k\to u \text{ in }\ac \Bigr\}.
\end{multline}
In the following theorem, which is proved in Subsections~\ref{subsection:liminf} and \ref{subsection:limsup}, we give a representation formula for the relaxed functional $\fbar$.

\begin{theorem}[Relaxation] \label{thm:relaxation}
Assume that $\sigma_{AB}\leq\sigma_A+\sigma_B$. Then the functional $\fbar$, defined in \eqref{relaxation}, is given by
\begin{equation} \label{relaxation2}
\fbar(u) = \int_{J_u\cap Q_L^+} \overline{\Psi}(u^+,u^-)\de\hn + \gamma\,\nl(u)
\end{equation}
for all $u\in\ac$, where
\begin{equation} \label{densityrelax1}
\overline\Psi(i,j)\coloneqq
\begin{cases}
\bar{\sigma}_A\coloneqq\min\{ \sigma_A, \sigma_B + \sigma_{AB} \} & \text{if } (i,j)=(1,0),\\
\bar{\sigma}_B\coloneqq\min\{ \sigma_B, \sigma_A + \sigma_{AB} \} & \text{if } (i,j)=(-1,0),\\
\sigma_{AB} & \text{if } (i,j)=(1,-1),\\
\min\{ \sigma_{AS}, \sigma_{BS} + \sigma_{AB} \} & \text{if } (i,j)=(1,2),\\
\min\{ \sigma_{BS}, \sigma_{AS} + \sigma_{AB} \} & \text{if } (i,j)=(-1,2),\\
\min\{ \sigma_{S}, \sigma_{AS}+\bar\sigma_A, \sigma_{BS}+\bar\sigma_B \} & \text{if } (i,j)=(0,2),
\end{cases}
\end{equation}
and $\overline{\Psi}(i,j)=\overline{\Psi}(j,i)$.
\end{theorem}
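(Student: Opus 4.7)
The plan is to prove the two inequalities separately. Since by assumption \eqref{eq:ass-nl} the nonlocal term $\gamma\nl(u)$ is continuous with respect to $L^1$-convergence, it contributes the same quantity on both sides and can essentially be ignored; the work is entirely on the surface part.

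For the liminf inequality, given $u_k\in\acreg$ with $u_k\to u$ in $\ac$ and matching masses, we want
\[
\int_{J_u\cap Q_L^+}\overline\Psi(u^+,u^-)\de\hn \leq \liminf_{k\to\infty}\int_{J_{u_k}\cap Q_L^+}\Psi(u_k^+,u_k^-)\de\hn.
\]
This falls within the Ambrosio--Braides lower semicontinuity theory for surface energies of partitions: it suffices to verify that $\overline\Psi$ is BV-elliptic, i.e.\ it satisfies the triangle inequalities $\overline\Psi(i,k)\leq\overline\Psi(i,j)+\overline\Psi(j,k)$ for every triple of distinct phases. A direct case-by-case check (using $\sigma_{AB}\leq\sigma_A+\sigma_B$ and the definitions in \eqref{densityrelax1}) confirms this, because $\overline\Psi$ is explicitly built as the infimum over all three-phase paths. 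Passing to the liminf then gives the desired bound, provided we observe that the $L^1$-limit $u$ automatically lies in $\ac$: the graph constraint $A_{u_k}\cup B_{u_k}=\Omega_{h_{u_k}}$ for periodic BV-profiles $h_{u_k}$ is closed under $L^1$-convergence, so $A_u\cup B_u$ is the subgraph of some $h_u\in\ap(Q_L)$.

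For the limsup inequality, we exhibit a recovery sequence $u_k\in\acreg$ with $u_k\to u$, $|A_{u_k}|=|A_u|$, $|B_{u_k}|=|B_u|$, and $\f(u_k)\to\overline\f(u)$. I would proceed in two stages. Stage one (profile regularization): approximate the BV-profile $h_u$ by Lipschitz profiles $h_k$ via the Chambolle--Solci construction \cite{ChaSol07}, which produces $h_k\to h_u$ in $L^1$ with $\hn(\Gamma_{h_k})\to\hn(\Gamma_{h_u})$ and reproduces the relaxation $\overline\Psi(0,2)=\min\{\sigma_S,\sigma_{AS}+\bar\sigma_A,\sigma_{BS}+\bar\sigma_B\}$ on the free surface; extend $u$ to these new subgraphs so that $A$ and $B$ change by an infinitesimal set. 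Stage two (wetting insertions): whenever $\overline\Psi(i,j)<\Psi(i,j)$ on a given interface, replace it by a thin layer of the optimal intermediate phase. For instance, if $\bar\sigma_A=\sigma_B+\sigma_{AB}<\sigma_A$, near $\Gamma^A_u$ we insert a slab of $B$ of vanishing thickness $\delta_k\to0$; if $\min\{\sigma_{AS},\sigma_{BS}+\sigma_{AB}\}=\sigma_{BS}+\sigma_{AB}$, we insert a thin horizontal layer of $B$ sitting on the substrate below $A$. These insertions reproduce the relaxed energy in the limit and can be performed so as to remain subgraphs of Lipschitz functions (using vertical slabs that do not create overhangs). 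Finally, since each insertion alters the masses by $O(\delta_k)\to0$, the mass constraints $|A_{u_k}|=|A_u|$, $|B_{u_k}|=|B_u|$ are restored by a small compensation (e.g.\ a lateral translation of the interface $\Gamma^{AB}$ by an infinitesimal amount), whose energetic cost is infinitesimal as well.

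The main obstacle is the recovery construction, precisely because of the graph constraint: standard cluster wetting (as in Ambrosio--Braides) allows arbitrary small infiltrations, but here every modification must still produce an admissible configuration, i.e.\ a subgraph of a Lipschitz function over the substrate. In particular, the composite relaxation $\overline\Psi(0,2)=\sigma_{AS}+\bar\sigma_A$ requires a two-scale insertion (a layer of $A$ above the substrate, itself possibly coated by a thinner layer of $B$ if $\bar\sigma_A<\sigma_A$), and these nested layers must be arranged as a genuine subgraph. Simultaneously, matching both mass constraints $|A_u|$ and $|B_u|$ while preserving the graph constraint is subtle; however, since each type of insertion can be performed independently and with thicknesses that are sent to zero, the compensation can be carried out one constraint at a time by local, volume-preserving adjustments whose surface cost vanishes.
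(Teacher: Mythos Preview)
Your liminf argument has a genuine gap. You claim that $\overline{\Psi}$ satisfies all the triangle inequalities and hence that the Ambrosio--Braides theory applies directly, but this is false. Consider for instance $\sigma_A=\sigma_B=\sigma_{AB}=1$, $\sigma_{AS}=\sigma_{BS}=10$, $\sigma_S=0.1$ (all the hypotheses of the theorem are met). Then $\overline{\Psi}(1,2)=\min\{10,11\}=10$, while $\overline{\Psi}(1,0)+\overline{\Psi}(0,2)=1+\min\{0.1,11,11\}=1.1$, so the triangle inequality through the phase $V$ fails badly. The reason is precisely that $\overline{\Psi}$ is \emph{not} the shortest-path metric over all phases: the definition of $\overline{\Psi}(1,2)$ deliberately excludes the path through $V$, because the graph constraint forbids a layer of void between $A$ and the substrate. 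So you cannot invoke BV-ellipticity of $\overline{\Psi}$ over arbitrary partitions.

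The paper circumvents this by a blow-up argument (Proposition~\ref{prop:lsc}) that treats each interface type separately. At a point of $S^A_u$ (and similarly $S^B_u$) one first modifies the blow-up sequence by filling the small $V$-component with $A$ or $B$; this is where the graph constraint is used, since it forces $\hn(S^V_k)\to0$ at such points. After this filling only the three phases $A,B,S$ remain, and the only triangle inequality needed is $\overline{\Psi}(1,2)\le\overline{\Psi}(1,-1)+\overline{\Psi}(-1,2)$, which does hold by construction. In short, the lower bound requires exploiting the constraint, not just checking metric properties of $\overline{\Psi}$.

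Your limsup sketch is in the right spirit and broadly matches the paper's construction (Chambolle--Solci regularization, then thin-layer insertions for each relaxed coefficient, then mass repair). Two inaccuracies: the Chambolle--Solci step by itself does not produce the relaxed coefficient $\overline{\Psi}(0,2)$---that requires a separate two-layer insertion on the exposed substrate (Step~4 in the paper); and the mass repair in the paper is done by a vertical rescaling followed by filling a small ball, not by translating $\Gamma^{AB}$, which would not obviously work without further assumptions on that interface.
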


\begin{remark}
From the proof of Theorem~\ref{thm:relaxation}, it also follows that the representation formula \eqref{relaxation2} continues to hold if we drop the mass constraints in the definition \eqref{relaxation} of $\fbar$.
\end{remark}

\begin{remark}
The assumption $\sigma_{AB}\leq\sigma_A+\sigma_B$ prevents the possibility of reducing the energy by inserting of a thin layer of void between the phases $A$ and $B$. \rosso{In the diblock copolymer application, this is} justified by the fact that the subchains of type $A$ and $B$ are chemically bonded together. In case the opposite inequality holds, the relaxed functional would have a different surface tension ($\sigma_A+\sigma_B$) only for the vertical interfaces of $\Gamma^{AB}$ connected to the graph.
\end{remark}

\begin{remark}
The choice of the $L^1$ topology is justified by the fact that, \rosso{in the application we have in mind,} we do not consider elastic effects, that would lead to cracks inside the copolymer phases. In case these effects have to be taken into account, a natural topology would be the Hausdorff convergence of the epigraph of the profile, as in \cite{BonCha02,FonFusLeoMor07}; the corresponding relaxed functional would contain additional terms accounting for vertical cracks, connected to the free profile of the film, inside the two phases.
\end{remark}

The existence of minimizers of the relaxed functional $\fbar$ follows by a standard application of the direct method of the Calculus of Variations. We fix two positive real numbers $M>0$ and $m\in(0,M)$, which represent the total volume of the film and the volume of the phase $A$, respectively. 

\begin{theorem}[Existence of minimizers] \label{thm:existence}
Under the assumptions of Theorem~\ref{thm:relaxation}, the constrained minimization problem
\begin{equation} \label{eq:min1}
\min \bigl\{ \fbar(u)  \,:\,  u\in\ac,\, |A_u|=m,\, |B_u|=M-m \bigr\}
\end{equation}
admits a solution. Furthermore, if $\bar{u}\in\ac$ is a solution of the above problem, then
\begin{equation} \label{eq:min2}
\fbar(\bar{u})=\inf \bigr\{ \f(u)  \,:\, u\in\acreg,\, |A_u|=m,\, |B_u|=M-m \bigr\}.
\end{equation}
\end{theorem}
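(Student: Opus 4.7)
The plan is a direct method argument, using the explicit representation of $\fbar$ given by Theorem~\ref{thm:relaxation} to extract compactness and lower semicontinuity. First, I would observe that the class of competitors in \eqref{eq:min1} is non-empty: a trivial configuration built from a constant profile with a single horizontal cut inside $\Omega_h$ gives some $u \in \acreg$ with the prescribed masses and finite energy, hence $\fbar \leq \f < +\infty$ at this competitor. Then I would take a minimizing sequence $\{u_k\} \subset \ac$ with $|A_{u_k}| = m$, $|B_{u_k}| = M-m$ and $\fbar(u_k) \to \inf$, so that by \eqref{relaxation2} the surface contribution is uniformly bounded.

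The key step is compactness. Using the positivity of the surface coefficients, the bound on $\fbar(u_k)$ controls $\hn(J_{u_k} \cap Q_L^+)$, hence in particular the perimeters of $A_{u_k}$ and $B_{u_k}$ in $Q_L^+$ and the measure of the free profile $\Gamma_{h_{u_k}} \cap Q_L^+ = \Gamma_{u_k}^A \cup \Gamma_{u_k}^B \cup S_{u_k}^V$. Since $\hn(\Gamma_{h_{u_k}}\cap Q_L^+) \geq |Dh_{u_k}|(Q_L)$ and $\|h_{u_k}\|_{L^1(Q_L)} = |\Omega_{h_{u_k}}| = M$, the sequence $\{h_{u_k}\}$ is bounded in $\BV(Q_L)$, so up to a subsequence $h_{u_k} \to h$ in $L^1(Q_L)$ for some $h \in \ap(Q_L)$. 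Simultaneously, $\{A_{u_k}\}$ and $\{B_{u_k}\}$ are sets of finite perimeter with uniformly bounded perimeter and uniformly bounded volume; by standard compactness (combined with a Markov-type tightness estimate in the vertical direction using $\int h_{u_k} = M$ to rule out mass escaping to infinity), further subsequences give $A_{u_k} \to A$, $B_{u_k} \to B$ in $L^1(Q_L^+)$. The mass constraints pass to the limit, yielding $|A|=m$ and $|B|=M-m$. Crucially, the inclusions $A_{u_k}, B_{u_k} \subset \Omega_{h_{u_k}}$ pass to $A \cup B \subset \Omega_h$, and since $|A \cup B| = M = |\Omega_h|$, we have in fact $A \cup B = \Omega_h$ a.e.; consequently the natural limiting configuration $u$ (with $u=1$ on $A$, $u=-1$ on $B$, $u=0$ on $Q_L^+ \setminus \Omega_h$, $u=2$ on $S$) lies in $\ac$ with $h_u = h$, and $u_k \to u$ in $\ac$.

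Once the limit is identified, lower semicontinuity of $\fbar$ with respect to $L^1$ convergence in $\ac$ follows from the explicit formula \eqref{relaxation2}: the surface integrand $\overline{\Psi}$ satisfies the strict triangle inequality at each pair of values (as a minimum of such quantities), so the surface part is lsc along Caccioppoli partitions by the Ambrosio--Braides theory \cite{AmbBra90}, while the nonlocal term is continuous in $L^1$ by \eqref{eq:ass-nl}. Therefore $\fbar(u) \leq \liminf_k \fbar(u_k) = \inf$, proving that $u$ solves \eqref{eq:min1}. Finally, for \eqref{eq:min2}, the constant recovery sequence shows $\fbar(v) \leq \f(v)$ for every $v \in \acreg$ with the prescribed masses, hence $\fbar(\bar u) \leq \inf\{\f(v) : v \in \acreg,\, |A_v|=m,\, |B_v|=M-m\}$; conversely, the defining relaxation formula \eqref{relaxation} supplies a recovery sequence $v_k \in \acreg$ with $|A_{v_k}|=m$, $|B_{v_k}|=M-m$ and $\f(v_k) \to \fbar(\bar u)$, giving the opposite inequality. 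The main technical obstacle, and the reason the graph constraint matters, is verifying that the partition-level compactness is compatible with the subgraph structure in the limit: this is exactly where both mass constraints are used, via the identity $|A|+|B|=|\Omega_h|$.
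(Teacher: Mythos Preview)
Your compactness argument and the derivation of \eqref{eq:min2} are fine, and the overall architecture is exactly what the paper intends. The gap is in the lower semicontinuity step: your claim that ``the surface integrand $\overline{\Psi}$ satisfies the strict triangle inequality at each pair of values (as a minimum of such quantities), so the surface part is lsc along Caccioppoli partitions by the Ambrosio--Braides theory'' is false, and the paper explicitly warns against this. The relaxed coefficients $\overline{\Psi}$ do \emph{not} satisfy all triangle inequalities. For a concrete failure, take $\sigma_{AS}=\sigma_{BS}$ small compared to $\sigma_{AB}$: then
\[
\overline{\Psi}(1,2)+\overline{\Psi}(-1,2)
=\min\{\sigma_{AS},\sigma_{BS}+\sigma_{AB}\}+\min\{\sigma_{BS},\sigma_{AS}+\sigma_{AB}\}
=\sigma_{AS}+\sigma_{BS}<\sigma_{AB}=\overline{\Psi}(1,-1),
\]
so the triple $(A,S,B)$ violates the triangle inequality. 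The point is that the substrate phase can never infiltrate between $A$ and $B$ because of the graph constraint, so this particular inequality is irrelevant for admissible sequences --- but that is precisely information the Ambrosio--Braides theorem cannot see.

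There are two correct ways to repair this. The cleanest, given the structure of the paper, is to observe that the blow-up argument in Proposition~\ref{prop:lsc} nowhere uses that the approximating sequence lies in $\acreg$: it only uses $u_j\in\ac$, so that in each of the six cases the graph constraint rules out the forbidden infiltrations (e.g.\ in Case~4 the phase $V$ is explicitly removed before applying Lemma~\ref{lemma:lsc3}). Hence the same proof yields $\fbar(u)\leq\liminf_k\fbar(u_k)$ for any sequence $u_k\to u$ in $\ac$. Alternatively, since your minimizing sequence has \emph{fixed} masses $|A_{u_k}|=m$, $|B_{u_k}|=M-m$, you can diagonalize over the recovery sequences from Proposition~\ref{prop:recovery}: for each $u_k$ pick $v_{k,j}\in\acreg$ with the same masses, $v_{k,j}\to u_k$ and $\f(v_{k,j})\to\fbar(u_k)$; a diagonal sequence $w_k\in\acreg$ then satisfies $w_k\to u$, $|A_{w_k}|=m$, $|B_{w_k}|=M-m$, and $\liminf_k\f(w_k)\leq\liminf_k\fbar(u_k)$, whence $\fbar(u)\leq\liminf_k\fbar(u_k)$ directly from the definition \eqref{relaxation}.
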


The remaining part of this section is devoted to the proof of Theorem~\ref{thm:relaxation}.
Since by \rosso{assumption \eqref{eq:ass-nl}} the term $\nl$ in the energy is continuous with respect to the convergence in $\ac$, it is sufficient to compute the relaxation of the surface energy. This is proved, as usual, in two steps: denoting by $\fc$ the right-hand side of \eqref{relaxation2}, in the first step (Proposition~\ref{prop:lsc}) it is shown that the energy $\fc(u)$ is smaller than the liminf of the energies of every sequence approximating $u$; in the second step (Proposition~\ref{prop:recovery}), we prove the sharpness of the lower bound, constructing a recovery sequence made of regular configurations.


\subsection{Lower semicontinuity} \label{subsection:liminf}
The lower semicontinuity of the interface part of the energy \eqref{relaxation2} follows essentially from the same type of arguments as in \cite{AmbBra90}. It is indeed well-known (see also \cite{Whi96}) that, for an isotropic surface energy defined on Caccioppoli partitions of a domain $\Omega$, where each interface has a cost proportional to its area, the validity of the triangle inequalities between the surface tensions is a necessary and sufficient condition for the lower semicontinuity of the functional. However, in our case we do not deal with generic Caccioppoli partitions, but we have a geometric restriction on the admissible configurations; this is reflected in the fact that the surface tension coefficients $\overline{\Psi}(i,j)$ do not satisfy all the possible triangle inequalities, but only those corresponding to actual configurations of the system. For this reason we cannot directly deduce the lower semicontinuity from \cite{AmbBra90}, but the proof is based on the same type of arguments and we will only sketch the main ideas. The main tool is the following lower semicontinuity lemma, whose proof follows easily by adapting the ideas in \cite[Proposition~3.1]{Mor97}.

\begin{lemma} \label{lemma:lsc3}
Let $F^1,F^2\subset B_1$ be disjoint sets of finite perimeter with $F^1\cup F^2 = B_1$, and let $m>2$.
Suppose that, for $i,j\in\{1,\ldots,m\}$, $\lambda_{ij}=\lambda_{ji}$ are nonnegative coefficients such that
\begin{equation} \label{triangle}
\lambda_{12} \leq \lambda_{1i_1}+\lambda_{i_1i_2} + \ldots + \lambda_{i_{j-1}i_j} + \lambda_{i_j2} \qquad\text{for all }i_1,\ldots i_j \in\{3,\ldots,m\}\text{ distinct}.
\end{equation}
For every $k\in\N$ let $(F^1_k,F^2_k,\ldots,F^m_k)$ be a Caccioppoli partition of $B_1$ into $m$ sets, such that $F^1_k\to F^1$, $F^2_k\to F^2$, and $F^i_k\to\emptyset$ in $L^1(B_1)$, $i=3,\ldots,m$, as $k\to\infty$. Then
\begin{equation*}
\lambda_{12}\hn(\partial^*F^1\cap\partial^*F^2) \leq \liminf_{k\to\infty}\sum_{\substack{i,j=1\\i<j}}^m \lambda_{ij}\hn(\partial^*F^i_k\cap\partial^*F^j_k) \,.
\end{equation*}
\end{lemma}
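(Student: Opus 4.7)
The plan is to adapt the blow-up/slicing argument of Morgan \cite{Mor97}. \emph{First}, I would encode the interfacial energies as Radon measures
\[
\mu_k \coloneqq \sum_{1\leq i<j\leq m}\lambda_{ij}\,\hn\mres(\partial^*F^i_k\cap\partial^*F^j_k)
\]
on $B_1$ and, along a subsequence realising the liminf (whose total masses $\mu_k(B_1)$ are then bounded), pass to a weak-$*$ limit $\mu$. Writing $\nu\coloneqq\hn\mres(\partial^*F^1\cap\partial^*F^2)$, it then suffices to prove that the Radon-Nikodym derivative $d\mu/d\nu$ is at least $\lambda_{12}$ at $\nu$-almost every point; combining this with $\mu(B_1)\leq\liminf_k\mu_k(B_1)$ (valid since $B_1$ is open) and with Besicovitch differentiation yields the claim.

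\emph{Second}, I fix a typical point $x_0\in\partial^*F^1\cap\partial^*F^2$ admitting a measure-theoretic normal $\nu=\nu(x_0)$, at which the blow-ups of $F^1,F^2$ converge to the complementary half-balls $H^{\pm}\coloneqq\{y\cdot\nu\gtrless 0\}\cap B_1$, and select radii $\rho_h\downarrow 0$ with $\mu(\partial B_{\rho_h}(x_0))=0$ along which the density $(d\mu/d\nu)(x_0)$ is recovered as $\lim_h\mu(B_{\rho_h}(x_0))/(\omega_{n-1}\rho_h^{n-1})$. Rescaling each configuration $(F^i_k)$ by $\rho_h$ about $x_0$ and a diagonal extraction in $k$ then produce partitions $(G^1_h,\ldots,G^m_h)$ of $B_1$ such that $G^1_h\to H^+$, $G^2_h\to H^-$, $G^i_h\to\emptyset$ in $L^1(B_1)$ for $i\geq 3$, and whose rescaled total surface energies on $B_1$ have limit inferior equal to $\omega_{n-1}\,(d\mu/d\nu)(x_0)$.

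\emph{Third}, assume $\nu=e_n$ without loss of generality. Standard BV slicing yields, for $\hn$-a.e.\ $y'$ in the flat disc $D\coloneqq\{y_n=0\}\cap B_1$, that the vertical segment $\ell_{y'}$ meets each $G^i_h$ in a 1-D set of finite perimeter, and Fubini reproduces the $(n-1)$-dimensional interface areas by integration of the $\hz$-masses of the 1-D interfaces. On a set of $y'\in D$ of almost full measure, for $h$ large the trace of $G^1_h$ (resp.\ $G^2_h$) fills a neighbourhood of the top (resp.\ bottom) endpoint of $\ell_{y'}$, while $|G^i_h\cap\ell_{y'}|\to 0$ for $i\geq 3$. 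Hence the sliced partition is a finite walk among the $m$ phases from phase $1$ to phase $2$, whose $\lambda$-weighted cost is at least the shortest-simple-path distance from $1$ to $2$ in the complete graph on $\{1,\ldots,m\}$ with edge weights $\lambda_{ij}$; by hypothesis \eqref{triangle} this distance is $\geq\lambda_{12}$, and the non-negativity of the weights ensures that the shortest walk cost coincides with the shortest simple-path cost. Integrating the 1-D bound in $y'$ over $D$ (of $\hn$-measure $\omega_{n-1}$) and taking $h\to\infty$ gives $\omega_{n-1}\,(d\mu/d\nu)(x_0)\geq\omega_{n-1}\,\lambda_{12}$, as required.

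\emph{Main obstacle.} The most delicate step is the slicing argument of Step~3: one must verify that on $\hn$-almost every transversal line $\ell_{y'}$ the rescaled traces really do connect phase $1$ to phase $2$, which in turn requires that the blow-up of Step~2 be carried out along radii yielding uniform enough control of the $L^1$-errors in $y'$; this is the role of the diagonal extraction in $k$, combined with the choice of $\rho_h$ avoiding atoms of $\mu$. The combinatorial piece—extracting from an arbitrary walk a simple chain of distinct intermediate phases in order to apply \eqref{triangle}—is then automatic from the non-negativity of the weights, which is precisely why the hypothesis is imposed only over distinct intermediate indices.
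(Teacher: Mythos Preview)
Your proposal is correct and follows exactly the approach the paper indicates, namely the blow-up and one-dimensional slicing argument of \cite[Proposition~3.1]{Mor97}; the paper itself does not spell out a proof but simply refers to this source. One small remark: in Step~3 the slicing does not \emph{reproduce} the $(n-1)$-dimensional interface areas but only bounds them from below (the coarea formula yields $\int|\nu\cdot e_n|\,\mathrm{d}\hn$ rather than $\hn$), which is precisely the inequality you need.
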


\begin{proposition} \label{prop:lsc}
Denote by $\fc$ the right-hand side of \eqref{relaxation2}. For every $u\in\ac$ and for every sequence $\{u_j\}_{j\in\N}\subset\acreg$ such that $u_j\to u$ in $\ac$ there holds
\begin{equation} \label{lsc}
\fc(u) \leq\liminf_{j\to\infty}\f(u_j).
\end{equation}
\end{proposition}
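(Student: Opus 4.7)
The plan is to prove \eqref{lsc} by a standard measure-theoretic blow-up argument in the spirit of \cite{AmbBra90}, with the graph constraint entering only in the verification of the relevant triangle inequalities.

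First I would use the continuity of $\gamma\nl$ with respect to $L^1$-convergence (guaranteed by \eqref{eq:ass-nl}) to reduce to the lower semicontinuity of the surface part. Passing to a subsequence (not relabelled) that attains the liminf, I would associate to each $u_j$ the positive Radon measure on $Q_L^+$
\[
\mu_j \coloneqq \Psi(u_j^+,u_j^-)\,\hn \mres (J_{u_j}\cap Q_L^+),
\]
which has uniformly bounded mass, and extract a further subsequence with $\mu_j \wstar \mu$ for some finite positive Radon measure $\mu$. By the Radon-Nikodym decomposition of $\mu$ with respect to $\hn\mres J_u$ together with Besicovitch differentiation, the inequality $\int_{J_u\cap Q_L^+}\overline\Psi(u^+,u^-)\,\de\hn\leq\mu(Q_L^+)$ reduces to the density estimate
\begin{equation}\label{densityestimate}
\liminf_{r\to 0^+}\frac{\mu(\overline{B_r(x_0)})}{\omega_{n-1}\, r^{n-1}}\geq \overline\Psi(u^+(x_0),u^-(x_0)) \qquad \text{for $\hn$-a.e.\ }x_0\in J_u\cap Q_L^+.
\end{equation}

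Next I would establish \eqref{densityestimate} by a blow-up. Fix $x_0\in J_u\cap Q_L^+$ at which $J_u$ admits a tangent hyperplane with normal $\nu(x_0)$ and $u$ has approximate limits $u^\pm(x_0)$ on either side, and set $H^\pm\coloneqq\{y\in B_1 : y\cdot\nu(x_0)\gtrless 0\}$, $v_0\coloneqq u^+(x_0)\chi_{H^+}+u^-(x_0)\chi_{H^-}$, and $v_j^r(y)\coloneqq u_j(x_0+ry)$ on $B_1$. Since $v_j^r\to u(x_0+r\,\cdot\,)$ in $L^1(B_1)$ as $j\to\infty$ and $u(x_0+r\,\cdot\,)\to v_0$ in $L^1(B_1)$ as $r\to 0$, a standard diagonal extraction produces $r_k\to 0^+$ (with $\mu(\partial B_{r_k}(x_0))=0$) and $j_k\to\infty$ for which $v_k\coloneqq v_{j_k}^{r_k}\to v_0$ in $L^1(B_1)$ and $\mu_{j_k}(\overline{B_{r_k}(x_0)})\to\mu(\overline{B_{r_k}(x_0)})$. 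The scaling identity
\[
\frac{\mu_{j_k}(\overline{B_{r_k}(x_0)})}{r_k^{n-1}}=\int_{J_{v_k}\cap\overline{B_1}}\Psi(v_k^+,v_k^-)\,\de\hn,
\]
combined with Lemma~\ref{lemma:lsc3} applied to the Caccioppoli partition $\{\{v_k=i\}\}_{i\in I}$ of $B_1$ with dominant phases $H^\pm$ (carrying the values $u^\pm(x_0)$), yields \eqref{densityestimate}, provided that for every path of intermediate phases that can arise in the blow-up the coefficient $\overline\Psi(u^+(x_0),u^-(x_0))$ is bounded by the corresponding sum of $\Psi$-coefficients.

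The final ingredient is a short case analysis of these triangle inequalities, and here the graph constraint plays a role. At interior points $x_0\in Q_L\times(0,+\infty)$ the substrate phase is uniformly far from $x_0$, so only intermediate phases in $\{-1,0,1\}$ occur, and each required inequality is immediate from \eqref{densityrelax1}, using the hypothesis $\sigma_{AB}\leq\sigma_A+\sigma_B$ for the $(1,-1)$ jump. At substrate points with jump $(i,2)$, $i\in\{\pm 1\}$, one has $h_u(x_0')>0$, and by choosing $r_k\to 0$ slowly enough relative to the $L^1$-convergence $h_{u_j}\to h_u$ on $Q_L$ one can force the rescaled subgraph of $u_{j_k}$ to cover $B_1(0)$ from above; the void phase $0$ is then absent in the blow-up, only the path through the opposite phase of $\{\pm 1\}$ is available, and $\overline\Psi(i,2)\leq\sigma_{AB}+\sigma_{BS}$ (or its symmetric) is part of the definition \eqref{densityrelax1}. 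At substrate points with jump $(0,2)$ (necessarily $h_u(x_0')=0$) all four phases may enter the blow-up, and a direct verification shows that $\overline\Psi(0,2)=\min\{\sigma_S,\sigma_{AS}+\bar\sigma_A,\sigma_{BS}+\bar\sigma_B\}$ is dominated by every admissible combination of original tensions along a path from $0$ to $2$ through $\{-1,1\}$. I expect the principal obstacle to be precisely this diagonal extraction at substrate points of type $(i,2)$, $i\in\{\pm 1\}$, where the blow-up scale must be tuned carefully against the $L^1$-convergence of the profiles $h_{u_j}$ in order to suppress the void phase and thereby restrict to admissible intermediate paths; once this is settled, the rest of the proof is a routine adaptation of the lsc technique of \cite{AmbBra90}.
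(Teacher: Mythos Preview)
Your overall architecture---reduction to the surface term, blow-up, and invocation of Lemma~\ref{lemma:lsc3} with a case analysis on the type of jump---is exactly the paper's approach. The gap is in your treatment of substrate points with jump $(i,2)$, $i\in\{\pm1\}$.

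You propose to tune the diagonal so that the rescaled subgraph of $u_{j_k}$ covers $B_1$ from above and the void phase is \emph{absent} in the blow-up. This cannot be arranged from $L^1$-convergence of the profiles alone. Take $h_u\equiv 1$ near $x_0'$ and let $h_{u_j}$ coincide with $h_u$ except for a Lipschitz dip to $0$ on an interval of length $1/j$ centred at $x_0'$; then $h_{u_j}\to h_u$ in $L^1$, yet for every choice of $r_k>0$ and $j_k$ the void touches the substrate at $x_0$ and is present in $B_{r_k}(x_0)$. What your diagonal \emph{can} achieve is $|\{v_k=0\}|\to 0$ in $L^1(B_1)$---but that is already automatic from $V_u$ having density zero at $x_0$, and it does not let you drop the void from the partition. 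Lemma~\ref{lemma:lsc3} then demands the triangle inequality along \emph{every} path through the vanishing phases, in particular
\[
\overline{\Psi}(1,2)\;\leq\;\Psi(1,0)+\Psi(0,2)\;=\;\sigma_A+\sigma_S,
\]
and this is not guaranteed by \eqref{densityrelax1} (e.g.\ $\sigma_{AS}=\sigma_{BS}$ large, $\sigma_A,\sigma_S,\sigma_{AB}$ small). So the ``only the path through the opposite phase is available'' step fails.

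The paper closes this gap differently: it does not try to suppress the void, but \emph{modifies} the rescaled competitor $v_k$ by filling $\{v_k=0\}$ with the phase $A$ or $B$ according to whether $\hn(\Gamma^B_k)\leq\hn(\Gamma^A_k)$ or not. The hypothesis $\sigma_{AB}\leq\sigma_A+\sigma_B$ ensures that this filling does not increase the surface energy, except for an error proportional to $\hn(S^V_k)$ which tends to zero. The modified sequence $\tilde v_k$ is then a genuine three-phase partition $(A,B,S)$ converging to the same half-space configuration, and for it Lemma~\ref{lemma:lsc3} only requires $\overline{\Psi}(1,2)\leq\Psi(1,-1)+\Psi(-1,2)=\sigma_{AB}+\sigma_{BS}$, which holds by definition. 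You should replace the diagonal-extraction step in the $(i,2)$ case by this filling construction.
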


\begin{proof}
As already observed, it is sufficient to consider the surface part of the energy, as $\nl(u)$ is continuous with respect to the convergence in $\ac$. Without loss of generality we can assume that the sequence $\f(u_j)$ is bounded and that the measures $\mu_j\coloneqq\Psi(u_j^+,u_j^-)\hn\mres J_{u_j}$ locally weakly* converge in $\R^n$ to a positive Radon measure $\mu$. We need to show that $\mu\geq\overline{\Psi}(u^+,u^-)\hn\mres J_u$. By \cite[Theorem~2.56]{AFP} it is sufficient to show that
\begin{equation} \label{prooflsc1}
\limsup_{\rho\to0^+}\frac{\mu(B_\rho(x))}{\omega_{n-1}\rho^{n-1}} \geq \overline{\Psi}(u^+(x),u^-(x)) \qquad\text{for }\hn\text{-a.e. }x\in J_u.
\end{equation}
This can be proved by a standard blow-up argument: for fixed $x\in J_u$, for a suitable sequence $\rho_k\to0^+$ and for a suitable subsequence, we have that the rescaled functions $v_k(y)\coloneqq u_{j_k}(x+\rho_k y)$ converge in $L^1(B_1)$ as $k\to\infty$ to the function
\begin{equation*}
w(y)\coloneqq
\begin{cases}
u^+(x) & \text{in } \{y\in B_1 : y\cdot\nu_u(x)>0 \}, \\
u^-(x) & \text{in } \{y\in B_1 : y\cdot\nu_u(x)<0 \},
\end{cases}
\end{equation*}
and $\limsup_{\rho\to0^+}\frac{\mu(B_\rho(x))}{\omega_{n-1}\rho^{n-1}} \geq \liminf_{k\to\infty} \frac{1}{\omega_{n-1}}\int_{B_1\cap J_{v_k}} \Psi(v_k^+,v_k^-)\de\hn$, so that the claim \eqref{prooflsc1} will follow once we prove that
\begin{equation} \label{prooflsc2}
\liminf_{k\to\infty} \int_{B_1\cap J_{v_k}} \Psi(v_k^+,v_k^-)\de\hn \geq \omega_{n-1}\overline{\Psi}(w^+,w^-) .
\end{equation}
In view of \eqref{interfaces3}, in order to show \eqref{prooflsc1} we now have to distinguish among six possible cases, depending on which interface contains the point $x$.

\smallskip\noindent\textit{Case 1: $x\in\Gamma^A_u$.} In this case, in the blow-up limit we have that half of the ball $B_1$ is filled with the pure phase $A_u$, and the other half ball is filled with the phase $V_u$; that is, up to a permutation $w^+=1$, $w^-=0$. Notice that $x_n>0$ and therefore for $k$ large enough the ball $B_{\rho_k}(x)$ does not intersect the substrate and can contain only the phases $A$, $B$, and $V$; hence, the rescaled functions $v_k$ can only take the values $\{\pm1,0\}$ in $B_1$, that is, $v_k^\pm\in\{\pm1,0\}$. Since by definition of $\overline{\Psi}$ the triangle inequality $\overline{\Psi}(1,0)\leq\overline{\Psi}(1,-1)+\overline{\Psi}(-1,0)$ holds and $\overline{\Psi}\leq\Psi$, the claim \eqref{prooflsc2} follows from Lemma~\ref{lemma:lsc3}, applied to $F^1\coloneqq\{w=1\}$, $F^2\coloneqq\{w=0\}$, $F^1_k\coloneqq\{v_k=1\}\to F^1$, $F^2_k\coloneqq\{v_k=0\}\to F^2$, $F^3_k\coloneqq\{v_k=-1\}\to\emptyset$.

\smallskip\noindent\textit{Case 2: $x\in\Gamma^B_u$.} This is analogous to the previous case.

\smallskip\noindent\textit{Case 3: $x\in\Gamma^{AB}_u$.} In this case $w^+=1$, $w^-=-1$, and since $B_{\rho_k}(x)$ does not intersect the substrate for $k$ large enough, we have $v_k^\pm\in\{\pm1,0\}$. Then \eqref{prooflsc2} follows again by Lemma~\ref{lemma:lsc3} in view of the triangle inequality $\overline{\Psi}(1,-1)\leq\overline{\Psi}(1,0)+\overline{\Psi}(-1,0)$, which holds by definition of $\overline{\Psi}$ and by the assumption $\sigma_{AB}\leq\sigma_A+\sigma_B$.

\smallskip\noindent\textit{Case 4: $x\in S^A_u$.} In this case $w^+=1$, $w^-=2$. In principle, all the four phases can be present in a neighbourhood of the point $x$; however, by the geometric constraint the limit interface between the phase $A$ and the substrate $S$ cannot be approximated by the boundary of $V$. Therefore, in order to apply Lemma~\ref{lemma:lsc3}, we first need to get rid of the possible infiltration of the phase $V$.

We denote by $A_k\coloneqq\{v_k=1\}$, $B_k\coloneqq\{v_k=-1\}$, $V_k\coloneqq\{v_k=0\}$ the phases of $v_k$ in the upper half ball $B_1^+$, and the corresponding interfaces by
\begin{equation*}
\Gamma_k^A \coloneqq \partial^*A_k\cap\partial^*V_k\cap B_1\,, \quad \Gamma_k^B \coloneqq \partial^*B_k\cap\partial^*V_k\cap B_1\,, \quad \Gamma^{AB}_k \coloneqq\partial^*A_k\cap\partial^*B_k\cap B_1,
\end{equation*}
\begin{equation*}
S^A_k \coloneqq \partial^*A_k\cap\partial S\cap B_1, \quad S^B_k \coloneqq \partial^*B_k\cap S\cap B_1, \quad S^V_k\coloneqq \partial^*V_k\cap S\cap B_1.
\end{equation*}
Then we modify $v_k$ by ``filling'' the region $V_k$ with either $A_k$ or $B_k$, according to the following rule:
\begin{equation*}
\tilde{v}_k(y)\coloneqq
\begin{cases}
v_k(y) & \text{if } y\in B_1\setminus V_k, \\
1 & \text{if } y\in V_k \text{ and } \hn(\Gamma^B_k)\leq\hn(\Gamma^A_k), \\
-1 & \text{if } y\in V_k \text{ and } \hn(\Gamma^A_k)<\hn(\Gamma^B_k).
\end{cases}
\end{equation*}
Notice that $\tilde{v}_k\to w$ in $L^1(B_1)$, and that the partition of the unit ball determined by $\tilde{v}_k$ does not contain the phase $V$. Therefore, using the inequality $\Psi(1,0)+\Psi(-1,0)\geq\Psi(-1,1)$,
\begin{align*}
\int_{B_1\cap J_{v_k}} \Psi(v_k^+,v_k^-)\de\hn
& = \Psi(1,0)\hn(\Gamma^A_k) + \Psi(-1,0)\hn(\Gamma^B_k) + \Psi(-1,1)\hn(\Gamma^{AB}_k) \\
& \qquad + \Psi(1,2)\hn(S^A_k) + \Psi(-1,2)\hn(S^B_k) + \Psi(0,2)\hn(S^{V}_k) \\
& \geq \Psi(-1,1)\min\bigl\{\hn(\Gamma^A_k),\hn(\Gamma^B_k)\bigr\} + \Psi(-1,1)\hn(\Gamma^{AB}_k) \\
& \qquad + \Psi(1,2)\hn(S^A_k) + \Psi(-1,2)\hn(S^B_k) + \Psi(0,2)\hn(S^{V}_k) \\
& = \int_{B_1\cap J_{\tilde{v}_k}} \Psi(\tilde{v}_k^+,\tilde{v}_k^-)\de\hn + \Psi(0,2)\hn(S^V_k) \\
& \qquad -\max\{\Psi(1,2),\Psi(-1,2)\} \hn(S^V_k).
\end{align*}
By observing that $\hn(S^V_k)\to0$ as $k\to\infty$, from the previous inequality we obtain
\begin{equation*}
\liminf_{k\to\infty} \int_{B_1\cap J_{v_k}} \Psi(v_k^+,v_k^-)\de\hn
\geq \liminf_{k\to\infty} \int_{B_1\cap J_{\tilde{v}_k}} \Psi(\tilde{v}_k^+,\tilde{v}_k^-)\de\hn.
\end{equation*}
To deduce \eqref{prooflsc2} we can now apply Lemma~\ref{lemma:lsc3} to the partition of $B_1$ determined by $\tilde{v}_k$, which contains only the three phases $A$, $B$, $S$ and that converges to the configuration where the upper half-ball is filled by $A$, and the lower half-ball is filled by $S$. Therefore to apply Lemma~\ref{lemma:lsc3} one only needs to check the triangle inequality $\overline{\Psi}(1,2)\leq\overline{\Psi}(1,-1)+\overline{\Psi}(-1,2)$, which holds by definition of $\overline{\Psi}$.

\smallskip\noindent\textit{Case 5: $x\in S^B_u$.} This is analogous to Case~4, with the roles of phases $A$ and $B$ exchanged.

\smallskip\noindent\textit{Case 6: $x\in S^V_u$.} In this case $w^+=0$, $w^-=2$, and all the four phases can be present in a neighbourhood of the point $x$. We deduce \eqref{prooflsc2} by applying once more Lemma~\ref{lemma:lsc3}, since that all the possible triangle inequalities \eqref{triangle} hold for $\lambda_{12}=\overline{\Psi}(0,2)$ in view of the definition of $\overline{\Psi}$.
\end{proof}


\subsection{Recovery sequence}\label{subsection:limsup}

The goal of this section is to prove the following result, which combined with Proposition~\ref{prop:lsc} completes the proof of Theorem~\ref{thm:relaxation}.

\begin{proposition} \label{prop:recovery}
Denote by $\fc$ the right-hand side of \eqref{relaxation2}. For every $u\in\ac$ there exists a sequence $\{u_j\}_{j\in\N}\subset\acreg$ such that $u_j\to u$ in $\ac$, $|A_{u_j}|=|A_u|$, $|B_{u_j}|=|B_u|$, and
\begin{equation} \label{recovery}
\fc(u) =\lim_{j\to\infty}\f(u_j).
\end{equation}
\end{proposition}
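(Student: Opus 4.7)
The plan is to build the recovery sequence in two stages: first a Lipschitz regularization of the free profile, then a thin-layer insertion that realizes the relaxed surface tensions; a small mass-correction step closes the argument. Throughout I only need to handle the surface energy $\g$, since by \eqref{eq:ass-nl} the term $\nl$ is $L^1$-continuous and mass-preserving $L^1$-perturbations only change it by $o(1)$.

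\emph{Stage one (profile regularization).} For $u\in\ac$ with profile $h_u\in\ap(Q_L)$, the construction of Chambolle--Solci \cite{ChaSol07} produces Lipschitz $Q_L$-periodic profiles $h_j$ with $\Omega_{h_j}\to\Omega_{h_u}$ in $L^1$ and perimeter of $\Omega_{h_j}^\#$ in $Q_L^+$ converging to $\hn(\Gamma_{h_u})+\hn(S^V_u)$, the vertical walls coming from jumps of $h_u$ being approximated by very steep portions of $h_j$. I lift $h_j$ to $\tilde u_j\in\acreg$ by keeping the $A/B$ labels of $u$ on $\Omega_{h_j}\cap\Omega_{h_u}$, extending them vertically to fill $\Omega_{h_j}\setminus\Omega_{h_u}$, and relabelling as void the excess $\Omega_{h_u}\setminus\Omega_{h_j}$. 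A direct bookkeeping then gives $\tilde u_j\to u$ in $\ac$ and $\g(\tilde u_j)\to \int_{J_u\cap Q_L^+}\Psi(u^+,u^-)\de\hn$, that is, the \emph{non-relaxed} surface energy of $u$.

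\emph{Stage two (layer insertion for relaxed tensions).} On each $\tilde u_j$ I now insert thin layers of vanishing thickness $\delta_j>0$ at the interfaces where $\overline\Psi<\Psi$, as follows. On $\Gamma^A_{\tilde u_j}$, when $\bar\sigma_A=\sigma_B+\sigma_{AB}$, I lower the $A$-void boundary by $\delta_j$ and fill the gap with a layer of $B$; the case $\bar\sigma_B<\sigma_B$ on $\Gamma^B_{\tilde u_j}$ is symmetric. On $S^A_{\tilde u_j}$, when the minimum is $\sigma_{BS}+\sigma_{AB}$, I lift $A$ off the substrate by $\delta_j$ and insert a layer of $B$ in between (the profile is unchanged); analogously on $S^B_{\tilde u_j}$. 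On $S^V_{\tilde u_j}$, depending on which term achieves $\overline\Psi(0,2)$, I either do nothing, or raise $h$ by $\delta_j$ and fill with a single thin layer of $A$ or $B$, or raise $h$ by $2\delta_j$ and stack two layers (for instance $A$ above the substrate, capped by a layer of $B$). Each operation is local near the corresponding interface, preserves the class $\acreg$, perturbs $|A|$ and $|B|$ by $O(\delta_j L^{n-1})$, and converts the cost on that piece of interface from $\Psi$ times its area to $\overline\Psi$ times the same area, up to an $O(\delta_j)$ error coming from the transversal faces of the inserted layers.

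\emph{Stage three (mass correction and diagonalization).} To enforce $|A_{u_j}|=|A_u|$ and $|B_{u_j}|=|B_u|$ exactly, I act on two disjoint subregions of $Q_L$ far from the inserted layers: on each one I raise or lower the profile by $O(\delta_j)$ and assign the added or removed volume to one phase, so that the two masses can be tuned independently; the extra surface cost is $O(\delta_j)$. A diagonalization as $j\to\infty$ and $\delta_j\to 0$ yields $u_j\in\acreg$ with the required mass constraints, $u_j\to u$ in $\ac$, and $\limsup_j\f(u_j)\le\fc(u)$, which together with Proposition~\ref{prop:lsc} gives \eqref{recovery}. The main obstacle is Stage two, especially the $S^V$ case when the minimizer in $\overline\Psi(0,2)$ is of double-reduction type $\sigma_{AS}+\sigma_B+\sigma_{AB}$ (or the symmetric one), forcing the simultaneous insertion of two stacked layers while raising $h$ by $2\delta_j$ and preserving the graph constraint; equally delicate is the patching of the separate constructions near points where three interfaces meet (on the substrate, or along $\overline{\Gamma^A_{\tilde u_j}}\cap\overline{\Gamma^B_{\tilde u_j}}$), so as to remain in $\acreg$ without double-counting the transition contributions.
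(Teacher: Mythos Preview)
Your three-stage outline matches the paper's architecture (Lipschitz regularization of the profile via Chambolle--Solci, thin-layer insertion to pass from $\Psi$ to $\overline\Psi$, then mass repair), and Stage one is essentially the paper's Step~1. The substantive difference is in how the layers are inserted.

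You propose \emph{local} insertions: carve a $\delta_j$-slab only under $\Gamma^A_{\tilde u_j}$ and fill with $B$, etc. This is where the gap lies. The set $P_A\coloneqq\pi_{x'}(\Gamma^A_{\tilde u_j})\subset Q_L$ on which you act is only $\leb^{n-1}$-measurable; it need not have finite perimeter in $\R^{n-1}$, so the lateral boundary of the inserted slab $\{x'\in P_A,\ h_j(x')-\delta_j<x_n<h_j(x')\}$ is not $O(\delta_j)$ in general, and the resulting configuration may even fail to be in $\ac$. You correctly flag the ``patching near triple points'' as the main obstacle, but you do not resolve it; as written, the $O(\delta_j)$ claim for transversal faces is unjustified.

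The paper sidesteps this entirely by making every layer insertion \emph{global} and exploiting the mutual exclusivity built into the triangle inequalities. For the free profile (their Step~3) at most one of $\bar\sigma_A<\sigma_A$, $\bar\sigma_B<\sigma_B$ can occur; say the second. They then rescale the \emph{whole} profile to $(1+\delta_j)h_{v_j}$ and fill the global gap $\{h_{v_j}<x_n<(1+\delta_j)h_{v_j}\}$ with the single phase $A$. This simultaneously leaves $\Gamma^A$ with cost $\sigma_A=\bar\sigma_A$ and converts $\Gamma^B$ to cost $\sigma_{AB}+\sigma_A=\bar\sigma_B$, with no lateral faces and no patching. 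The substrate steps (their Steps~2 and~4) are handled the same way: a single horizontal slab $\{0<x_n<s_j\}\cap\Omega_h$ (resp.\ two stacked slabs over the exposed region $\{h=0\}$) filled with one phase, chosen according to which term attains the minimum in $\overline\Psi$; again no transversal boundary appears. Their mass repair (Step~5) is also more explicit than yours: a global vertical rescaling fixes $|\Omega_h|$, then a single small ball in the interior of one phase fixes $|A|$.

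In short: your plan is correct in spirit, but the local-insertion route creates the very patching problem you identify and leave open. The paper's key trick is that each relaxed coefficient requires only one global layer of a single phase (chosen case by case), which eliminates transversal contributions altogether.
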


\begin{proof}
Fix $u\in\ac$ and let $h_u\in\ap(Q_L)$ be the corresponding admissible profile. The proof is divided into several steps (see Figure~\ref{fig:modifications} for the modifications performed in Step~2, 3, and 4).

\medskip\noindent\textit{Step 1: approximation of $h_u$ with a regular profile.}
In this step we construct a sequence $\widetilde{u}_j\in\ac$ such that $\widetilde{u}_j\to u$ in $\ac$ and $\fc(\widetilde{u}_j)\to\fc(u)$, with the additional property that the corresponding profiles $h_{\widetilde{u}_j}$ are smooth. By a diagonal argument this will allow us, in the following steps, to work under the assumption that the limiting profile is smooth, and to construct a recovery sequence only in this case.

For each $j\in\N$ it is possible to find a  $Q_L$-periodic function $f_j \in C^\infty(\R^{n-1})$ such that
\begin{equation}\label{eq:approx1}
\|f_j-h_u\|_{L^1(Q_L)}+\hn(\Gamma_{h_u} \cap \Omega_{f_j})\leq\frac{1}{j}\,,
\end{equation}
\begin{equation}\label{eq:approx2}
\left| \int_{Q_L} \sqrt{1+|\nabla f_j(x')|^2} \de x' - \hn(\Gamma_{h_u})  \right|\leq\frac{1}{j}\,,
\end{equation}
and
\begin{equation}\label{eq:zerosetofhj}
\left|\, \hn(\{f_j=0\}) - \hn(\{h_u=0\}) \,\right|\leq\frac{1}{j}\,.
\end{equation}
The proof of the first two statements is contained in Step~1 of the proof of \cite[Proposition~4.1]{ChaSol07}, while the last statement is proved in \cite[Remark~4.4]{ChaSol07}.
Define the function $\widetilde{u}_j: \R^n
\to\{0,-1,1,2\}$ as
\begin{equation}\label{eq:wideuj}
\arraycolsep=1.8pt\def\arraystretch{1.4}
\widetilde{u}_j(x)\coloneqq\left\{
\begin{array}{lll}
u(x) & & \text{ if } x\in \Omega_{f_j}^\#\cap\Omega_{h_u}^\#,\\
1 & & \text{ if } x\in \Omega_{f_j}^\#\setminus\Omega_{h_u}^\#,\\
0 & & \text{ if } x\in\R^n_+\setminus\Omega_{f_j}^\#,\\
2 & & \text{ if } x\in S.
\end{array}
\right.
\end{equation}
This  modification amounts to fill the (small) region in $\Omega_{f_j}\setminus\Omega_{h_u}$ by the phase $A$, and to remove the possible parts of the phases $A$ and $B$ outside $\Omega_{f_j}$ by replacing them with $V$. Notice that $\widetilde{u}_j\in\acreg$ with $f_j=h_{\widetilde{u}_j}$ and
\begin{equation}\label{eq:conv_uj_1}
\| \widetilde{u}_j - u \|_{L^1(Q_L\times\R)} \leq \frac{2}{j}\,.
\end{equation}

First, we show that
\begin{equation}\label{eq:convhninterna}
\hn(\Gamma^{AB}_{\widetilde{u}_j})\to \hn(\Gamma^{AB}_u).
\end{equation}
Define the Radon measures $\mu^A\coloneqq D\chi_{A_u^\#}$, $\mu^B\coloneqq D\chi_{B_u^\#}$, and, for $j\in\N$, define $\mu^A_j\coloneqq D\chi_{A_{\widetilde{u}_j}^\#}$ and $\mu^B_j\coloneqq D\chi_{B_{\widetilde{u}_j}^\#}$. Then \eqref{eq:approx2} and \eqref{eq:zerosetofhj} yield
\begin{equation} \label{proof:recovery_step1_1}
\lim_{j\to\infty}|\mu^A_j + \mu^B_j|(Q_L^+) = \lim_{j\to\infty}|D\chi_{\Omega_{f_j}^\#}|(Q_L^+) = |D\chi_{\Omega_{h_u}^\#}|(Q_L^+) = |\mu^A + \mu^B|(Q_L^+),
\end{equation}
and, since $A_{\widetilde{u}_j}\to A_u$, $B_{\widetilde{u}_j}\to B_u$, using also the periodicity,
\begin{equation} \label{proof:recovery_step1_2}
|\mu^A|(Q_L^+)\leq \liminf_{j\to\infty}|\mu_j^A|(Q_L^+),\quad\quad\quad
|\mu^B|(Q_L^+)\leq \liminf_{j\to\infty}|\mu_j^B|(Q_L^+).
\end{equation}
Combining the previous estimates we obtain \eqref{eq:convhninterna} from
\begin{align*}
\hn(\Gamma^{AB}_u)
& = \frac{|\mu^A|(Q_L^+) + |\mu^B|(Q_L^+) - |\mu^A+\mu^B|(Q_L^+)}{2} \\
&\leq \liminf_{j\to\infty}\frac{|\mu_j^A|(Q_L^+) + |\mu_j^B|(Q_L^+) - |\mu_j^A+\mu_j^B|(Q_L^+)}{2} 
=\liminf_{j\to\infty}\hn(\Gamma^{AB}_{\widetilde{u}_j})\\
&\leq \liminf_{j\to\infty} \biggl(\hn(\Gamma^{AB}_{u}) + \hn(\Gamma_{h_u} \cap \Omega_{f_j}) \biggr) \leq\hn(\Gamma^{AB}_{u}),
\end{align*}
where last step follows by \eqref{eq:approx1}.
Next, we claim that
\begin{equation}\label{eq:convGammaA}
\hn(\Gamma^A_{\widetilde{u}_j})\to \hn(\Gamma^A_u),\quad\quad
\hn(\Gamma^B_{\widetilde{u}_j})\to \hn(\Gamma^B_u),
\end{equation}
and that
\begin{equation}\label{eq:convSubs}
\hn(S^A_{\widetilde{u}_j})\to \hn(S^A_u),\quad\quad
\hn(S^B_{\widetilde{u}_j})\to \hn(S^B_u).
\end{equation}
Using \eqref{eq:convhninterna}, \eqref{proof:recovery_step1_1}, and \eqref{proof:recovery_step1_2}, we have
\begin{align*}
|\mu^A|(Q_L^+) + |\mu^B|(Q_L^+)
& \leq\liminf_{j\to\infty} \, \bigl[ \, |\mu^A_j|(Q_L^+) + |\mu^B_j|(Q_L^+) \, \bigr] \\
&= \liminf_{j\to\infty} \, \bigl[ \, |\mu^A_j+\mu^B_j|(Q_L^+) + 2\hn(\Gamma^{AB}_{\widetilde{u}_j}) \, \bigr] \\
& = |\mu^A+\mu^B|(Q_L^+) + 2\hn(\Gamma^{AB}_u)
 = |\mu^A|(Q_L^+) + |\mu^B|(Q_L^+),
\end{align*}
hence
\begin{equation}\label{eq:convper}
|\mu^A_j|(Q_L^+) \to |\mu^A|(Q_L^+),
\quad\quad
|\mu^B_j|(Q_L^+)\to|\mu^B|(Q_L^+).
\end{equation}
Denote now, for $\e>0$, $Q^\e \coloneqq Q_L\times(\e,+\infty)$, and notice that for $\leb^1$-almost every $\e>0$ we have $\hn(J_u\cap\{x_n=\e\})=0$. For all such $\e$, thanks to \eqref{proof:recovery_step1_1} and to \eqref{eq:convper} we obtain
\begin{equation*}
|\mu^A_j|(Q^\e) \to |\mu^A|(Q^\e),
\qquad
|\mu^B_j|(Q^\e) \to |\mu^B|(Q^\e),
\qquad
|\mu^A_j+\mu^B_j|(Q^\e)\to |\mu^A+\mu^B|(Q^\e),
\end{equation*}
and in turn, arguing as in the proof of \eqref{eq:convhninterna},
$\hn(\Gamma^{AB}_{\widetilde{u}_j}\cap Q^\e) \to \hn(\Gamma^{AB}_u\cap Q^\e).$
Then for almost every $\e>0$
\begin{align*}
\hn(\Gamma^A_{\widetilde{u}_j}\cap Q^\e)
& = |\mu^A_j|(Q^\e) - \hn(\Gamma^{AB}_{\widetilde{u}_j}\cap Q^\e) \\
& \to |\mu^A|(Q^\e) - \hn(\Gamma^{AB}_u\cap Q^\e)
= \hn(\Gamma^A_u\cap Q^\e),
\end{align*}
and similarly $\hn(\Gamma^B_{\widetilde{u}_j}\cap Q^\e)\to\hn(\Gamma^B_u\cap Q^\e)$. From these two convergences \eqref{eq:convGammaA} follows: indeed, if \eqref{eq:convGammaA} fails then for some $\eta>0$ we would have (using the fact that $\hn(\Gamma^A_{\widetilde{u}_j}\cup\Gamma^B_{\widetilde{u}_j})\to\hn(\Gamma^A_u\cup\Gamma^B_u)$)
$$
\limsup_{j\to\infty}\hn(\Gamma^A_{\widetilde{u}_j})\geq\hn(\Gamma^A_u)+\eta, \qquad \liminf_{j\to\infty}\hn(\Gamma^B_{\widetilde{u}_j})\leq \hn(\Gamma^B_u)-\eta
$$
(or the symmetric inequalities with $A$ and $B$ exchanged). This yields
$$
\liminf_{j\to\infty}\hn(\Gamma^B_{\widetilde{u}_j}\setminus Q^\e)\leq \hn(\Gamma^B_u\setminus Q^\e)-\eta \qquad\text{for every $\e>0$,}
$$
which is a contradiction since $\hn(\Gamma^B_u\setminus Q^\e)\to0$ as $\e\to0$. This proves \eqref{eq:convGammaA}.
Finally, by writing
\begin{align*}
|\mu^A_j|(Q_L^+) & = \hn(\Gamma^A_{\widetilde{u}_j})
+\hn(\Gamma^{AB}_{\widetilde{u}_j}) + \hn(S^A_{\widetilde{u}_j}), \\
|\mu^A|(Q_L^+) & = \hn(\Gamma^A_u)
+\hn(\Gamma^{AB}_u) + \hn(S^A_u)
\end{align*}
(and similarly for $B$), we conclude that also \eqref{eq:convSubs} holds by using \eqref{eq:convhninterna}, \eqref{eq:convGammaA}, and \eqref{eq:convper}.

Thanks to \eqref{eq:conv_uj_1}, \eqref{eq:convhninterna}, \eqref{eq:convGammaA}, and \eqref{eq:convSubs} we obtain $\fc(\widetilde{u}_j)\to\fc(u)$, as desired.

\begin{figure}
\begin{center}
\includegraphics[scale=1]{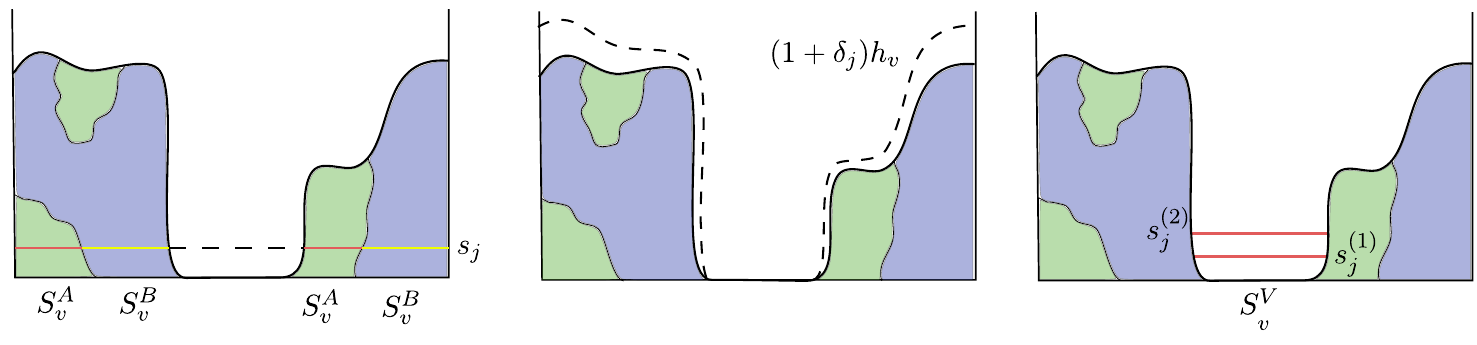}
\caption{The modifications we perform in Step~2 (left), 3 (center), and 4 (right).}
\label{fig:modifications}
\end{center}
\end{figure}

\medskip\noindent\textit{Step 2: the non-exposed substrate.}
Assume $v\in\acreg$. We construct a sequence $\{v_j\}_{j\in\N}\subset\acreg$ such that
\begin{equation}\label{eq:conv_uj_2}
\lim_{j\to\infty}\| v_j - v \|_{L^1(Q_L^+)}=0 ,
\end{equation}
that allows to recover the relaxed coefficients $\overline{\Psi}(1,2)$ and $\overline{\Psi}(-1,2)$ with the non-exposed substrate in the limit energy, in the sense that
\begin{equation} \label{eq:conv_energy_2b}
\f(v_j)\to \f(v) + \bigl( \overline{\Psi}(1,2)-\Psi(1,2) \bigr) \hn(S_v^A) + \bigl( \overline{\Psi}(-1,2)-\Psi(-1,2) \bigr) \hn(S_v^B).
\end{equation}
In the case where $\sigma_{AS}\leq\sigma_{BS}+\sigma_{AB}$ and $\sigma_{BS}\leq\sigma_{AS}+\sigma_{AB}$, the relaxed surface tensions $\overline{\Psi}(1,2)$ and $\overline{\Psi}(-1,2)$ coincide with the original ones $\Psi(1,2)$ and $\Psi(-1,2)$; in this case there is nothing to do, and we just take $v_j\coloneqq v$ for each $j\in\N$. Assume instead
\[
\sigma_{AS}\leq\sigma_{BS}+\sigma_{AB}\quad\quad\text{and}\quad\quad\sigma_{AS}+\sigma_{AB}<\sigma_{BS}\,.
\]
The only other possible case is $\sigma_{BS}+\sigma_{AB}<\sigma_{AS}$ and $\sigma_{BS}\leq\sigma_{AS}+\sigma_{AB}$, that can be treated similarly. We need to build a sequence $\{v_j\}_{j\in\N}$ satisfying \eqref{eq:conv_uj_2} and \eqref{eq:conv_energy_2b}, which in this case becomes
\begin{equation}\label{eq:conv_energy_2}
\f(v_j)\to \f(v) + (\sigma_{AS} + \sigma_{AB} - \sigma_{BS}) \hn(S_v^B).
\end{equation}

By standard results on traces of $\BV$-functions (for instance, combining equation (2.8) and Theorem~2.11 in \cite{G}), it is possible to find a sequence $\{s_j\}_{j\in\N}$ with $s_j\to0^+$ as $j\to\infty$ such that
\begin{equation}\label{eq:conv_AS-BS_1}
\hn(A_v^{(1)}\cap\{x_n=s_j\})\to \hn(S^A_v), \quad \hn(B_v^{(1)}\cap\{x_n=s_j\})\to \hn(S^B_v),
\end{equation}
and since $\hn(J_v\cap\{x_n=t\})=0$ for $\leb^1$-a.e.\ $t$ we can also assume
\begin{equation}\label{eq:convstep2-2}
\hn(J_v\cap\{x_n=s_j\}) = 0.
\end{equation}
Also note that, since $\hn(J_v)<\infty$,
\begin{equation} \label{eq:convstep2-3}
\hn(J_v\cap\{0<x_n<s_j\})\to 0.
\end{equation}
Define the function $v_j:\R^n\to\{0,-1,1,2\}$ as
\begin{equation}\label{eq:vj}
v_j(x',x_n)\coloneqq\left\{
\begin{array}{ll}
1 & \text{if } (x',x_n)\in\Omega_{h_v}^\# \text{ and } 0<x_n<s_j,\\
v(x',x_n) & \text{otherwise,}
\end{array}
\right.
\end{equation}
which satisfies $v_j\in\acreg$ for each $j\in\N$ (since $h_{v_j}=h_v$) and $\| v_j - v \|_{L^1(Q_L^+)} \leq 2 s_j \leb^{n-1}(Q_L)$,
which gives \eqref{eq:conv_uj_2}. This sequence allows to adjust the surface tensions for the substrate: namely, we have by \eqref{eq:convstep2-2}
\begin{align*}
\f(v_j) - \f(v)
& = (\sigma_A-\sigma_B)\hn(\Gamma^B_v\cap\{0<x_n<s_j\}) - \sigma_{AB}\hn(\Gamma^{AB}_v\cap\{0<x_n<s_j\}) \\
& \qquad + \sigma_{AB}\hn(B_v^{(1)}\cap\{x_n=s_j\}) + (\sigma_{AS}-\sigma_{BS})\hn(S^B_v) \\
& \qquad + \gamma \, (\nl(v_j) - \nl(v)).
\end{align*}
By passing to the limit as $j\to\infty$, the first two terms on the right-hand side vanish thanks to \eqref{eq:convstep2-3}, the third term tends to $\sigma_{AB}\hn(S^B_v)$ by \eqref{eq:conv_AS-BS_1}, and the last term tends to zero by \eqref{eq:conv_uj_2}. Hence \eqref{eq:conv_energy_2} follows.

\medskip\noindent\textit{Step 3: the graph.}
Let $v\in\acreg$ and $\{v_j\}_{j\in\N}\subset\acreg$ be the sequence constructed in the previous step, satisfying \eqref{eq:conv_uj_2} and \eqref{eq:conv_energy_2b}. We want to modify the sequence in such a way to recover the relaxed surface tensions $\overline{\Psi}(1,0)$ and $\overline{\Psi}(-1,0)$ between the two phases $A$, $B$ and the void $V$: more precisely, we want to construct another sequence $\{w_j\}_{j\in\N}\subset\acreg$ such that
\begin{equation}\label{eq:conv_uj_3}
\lim_{j\to\infty}\| w_j - v_j \|_{L^1(Q_L^+)}=0,
\end{equation}
and
\begin{multline}\label{eq:conv_energy_3b}
\lim_{j\to\infty} \big| \f(w_j) - \f(v_j) - \bigl( \overline{\Psi}(1,0)-\Psi(1,0)\bigr) \hn(\Gamma_v^A) \\
- \bigl( \overline{\Psi}(-1,0)-\Psi(-1,0)\bigr) \hn(\Gamma_v^B) \big| =0.
\end{multline}

In the case where $\sigma_{A}\leq\sigma_{B}+\sigma_{AB}$ and $\sigma_{B}\leq\sigma_{A}+\sigma_{AB}$, the relaxed surface tensions $\overline{\Psi}(1,0)$ and $\overline{\Psi}(-1,0)$ coincide with the original ones $\Psi(1,0)$ and $\Psi(-1,0)$; in this case there is nothing to do, and we just take $w_j\coloneqq v_j$ for each $j\in\N$. Assume instead
\[
\sigma_A\leq\sigma_B+\sigma_{AB}\,,\quad\quad \sigma_A+\sigma_{AB}<\sigma_B\,.
\]
The only other possible case is $\sigma_B+\sigma_{AB}<\sigma_A$ and $\sigma_B\leq\sigma_A+\sigma_{AB}$, that can be treated similarly. In this case the condition \eqref{eq:conv_energy_3b} becomes
\begin{equation}\label{eq:conv_energy_3}
\lim_{j\to\infty} \left| \f(w_j) - \f(v_j) - (\sigma_A+\sigma_{AB} - \sigma_B) \hn(\Gamma_v^B) \right| =0.
\end{equation}

Let $\delta_j\to0^+$ and define, for each $j\in\N$, the function $w_j:\R^n\to\{0,-1,1,2\}$ by
\begin{equation}\label{eq:wj}
w_j(x',x_n)\coloneqq
\begin{cases}
v_j(x',x_n) & \text{ if } (x',x_n)\in\Omega^\#_{h_{v_j}},\\[1ex]
1 & \text{ if } h_{v_j}(x')<x_n<(1+\delta_j)h_{v_j}(x'), \\[1ex]
0 & \text{ if } x_n\geq(1+\delta_j)h_{v_j}(x').
\end{cases}
\end{equation}
Note that $h_{w_j}=(1+\delta_j)h_{v_j}=(1+\delta_j)h_v$ (recalling that $h_{v_j}=h_v$ for all $j$, by the construction in Step~2), therefore $w_j\in\acreg$ and
\[
\| w_j - v_j \|_{L^1(Q_L^+)} \leq \int_{Q_L}\int_{h_{v}(x')}^{(1+\delta_j)h_{v}(x')} |1-v_j(x',x_n)|\de x_n\de x'
\leq 2\delta_j \int_{Q_L}h_{v}(x')\de x'
\]
which yields \eqref{eq:conv_uj_3}. Moreover by a Taylor expansion
\begin{align*}
\int_{Q_L\setminus S^V_{v}}\sqrt{1 + |(1+\delta_j)\nabla h_{v}(x')|^2}\de x'
&= \hn(\Gamma^A_{v_j}) + \hn(\Gamma^B_{v_j}) + o(1),
\end{align*}
therefore
\begin{align*}
\f(w_j) - \f(v_j)
& = \sigma_A \bigl( \hn(\Gamma^A_{v_j}) + \hn(\Gamma^B_{v_j}) \bigr) + o(1) \\
& \qquad + (\sigma_{AB}-\sigma_B)\hn(\Gamma^{B}_{v_j}) - \sigma_A\hn(\Gamma^A_{v_j}) + \gamma \, ( \nl(w_j) - \nl(v_j) ) .
\end{align*}
We get \eqref{eq:conv_energy_3} by using \eqref{eq:conv_uj_3} and recalling that, by the construction in Step~2,  we have $\hn(\Gamma^A_{v_j})\to\hn(\Gamma^A_{v})$ and $\hn(\Gamma^B_{v_j})\to\hn(\Gamma^B_{v})$.

\medskip\noindent\textit{Step 4: the exposed substrate.}
Let $v\in\acreg$ and $\{w_j\}_{j\in\N}\subset\acreg$ be the sequence constructed in the previous step. We want to modify again the sequence in such a way to recover the relaxed surface tension $\overline{\Psi}(0,2)$ of the exposed substrate, that is the interface between the substrate $S$ and the void $V$: more precisely, we want to construct another sequence $\{z_j\}_{j\in\N}\subset\acreg$ such that
\begin{equation}\label{eq:conv_uj_4}
\lim_{j\to\infty}\| z_j - w_j \|_{L^1(Q_L^+)}=0,
\end{equation}
and
\begin{equation}\label{eq:conv_energy_4b}
\lim_{j\to\infty} \big| \f(z_j) - \f(w_j) - \bigl( \overline{\Psi}(0,2)-\Psi(0,2) \bigr) \hn(S_v^V) \big| =0.
\end{equation}

In the case where
$\sigma_S\leq\min\{ \sigma_{AS}+\bar{\sigma}_A,
\sigma_{BS}+\bar{\sigma}_B \}$
there is nothing to do since $\overline{\Psi}(0,2)=\Psi(0,2)$, and thus we define $z_j\coloneqq w_j$ for all $j\in\N$.
Assume that
\[
\sigma_{AS}+\bar{\sigma}_A\leq\min\{ \sigma_S, \sigma_{BS}+\bar{\sigma}_B \}
\qquad\text{and}\qquad
\bar{\sigma}_A=\sigma_{AB}+\sigma_B.
\]
In this case \eqref{eq:conv_energy_4b} becomes
\begin{equation}\label{eq:conv_energy_4}
\lim_{j\to\infty} \left| \f(z_j) - \f(w_j)
	- (\sigma_{AS} + \sigma_{AB} + \sigma_B - \sigma_S) \hn(S_v^V) \right| =0.
\end{equation}
Note that the other possible cases can be treated similarly (and even more easily).

We fix two sequences $s^{(1)}_j, s^{(2)}_j\in(0,1)$, for $j\in\N$, with $s^{(1)}_j < s^{(2)}_j$ and $s^{(1)}_j, s^{(2)}_j\to0$ as $j\to\infty$, such that, by setting
$L_s\coloneqq V_{w_j}\cap \{x_n=s\},$
we have
\begin{equation}\label{eq:convstep4-2}
\hn(L_{s^{(1)}_j}) \to \hn(S^V_v),\quad\quad\quad
\hn(L_{s^{(2)}_j}) \to \hn(S^V_v),
\end{equation}
and
\begin{equation}\label{eq:convstep4-3}
\hn(\Gamma_{w_j}\cap\{0<x_n<s_j^{(2)}\}) \to 0.
\end{equation}
The existence of such sequences can be proved similarly to \eqref{eq:conv_AS-BS_1}, using also the convergence $\hn(S^V_{w_j})\to\hn(S^V_v)$ in view of the construction of $w_j$ in the previous step.
We define the function $z_j:Q_L\times\R\to\{0,-1,1,2\}$ (extended by periodicity to $\R^n$) by
\begin{equation}\label{eq:widevj}
z_j(x',x_n)\coloneqq
\begin{cases}
w_j(x',x_n) & \text{ if } (x',x_n)\in\Omega_{h_{w_j}} \cup S,\\[1ex]
1  & \text{ if } (x',x_n)\in V_{w_j} \text{ and }0<x_n<s_j^{(1)},\\[1ex]
-1 & \text{ if } (x',x_n)\in V_{w_j} \text{ and }s_j^{(1)}<x_n<s_j^{(2)},\\[1ex]
0 & \text{ else.}
\end{cases}
\end{equation}
Since $h_{z_j}=\max\{h_{w_j},s_j^{(2)}\}$ we have $z_j\in\acreg$, and also $\| w_j - z_j \|_{L^1(Q_L^+)} \leq s_j^{(2)} \leb^{n-1}(Q_L)$, which yields \eqref{eq:conv_uj_4}. Moreover
\begin{equation} \label{eq:convstep4-4}
\begin{split}
\f(z_j) - \f(w_j) 
& = \sigma_B \hn(L_{s_j^{(2)}}) + \sigma_{AB} \hn(L_{s_j^{(1)}}) + (\sigma_{AS}-\sigma_S)\hn(S^V_{w_j}) \\
&\qquad + \gamma \, (\nl(z_j) - \nl(w_j)) + R_j,
\end{split}
\end{equation}
where
\begin{align*}
R_j
&\coloneqq - \hn(\Gamma^A_{w_j}\cap\{0<x_n<s_j^{(1)}\}) + (\sigma_{AB}-\sigma_B)\hn(\Gamma^B_{w_j}\cap\{0<x_n<s_j^{(1)}\}) \\
&\qquad + (\sigma_{AB}-\sigma_A)\hn(\Gamma^A_{w_j}\cap\{s_j^{(1)}<x_n<s_j^{(2)}\}) - \hn(\Gamma^B_{w_j}\cap\{s_j^{(1)}<x_n<s_j^{(2)}\}).
\end{align*}
Notice that $R_j\to0$ thanks to \eqref{eq:convstep4-3}.
We then obtain \eqref{eq:conv_energy_4} by passing to the limit in \eqref{eq:convstep4-4}, using \eqref{eq:conv_uj_4}, \eqref{eq:convstep4-2}, and the fact that $\hn(S^V_{w_j})\to\hn(S^V_v)$.

\medskip\noindent\textit{Step 5: the mass constraint.}
By combining the constructions in the previous steps and using a diagonal argument, we have that given $u\in\ac$, there exists a sequence $\{z_j\}_{j\in\N}\subset\acreg$ such that
\begin{equation}\label{eq:good_vj_1}
\lim_{j\to\infty}\| z_j - u \|_{L^1(Q_L^+)}=0,
\qquad
\lim_{j\to\infty} \f(z_j) = \fc(u)
\end{equation}
(see in particular \eqref{eq:conv_uj_2}, \eqref{eq:conv_uj_3}, \eqref{eq:conv_uj_4} for the convergence of the functions, and \eqref{eq:conv_energy_2b}, \eqref{eq:conv_energy_3b}, \eqref{eq:conv_energy_4b} for the convergence of the energies). In order to obtain the recovery sequence, we need to restore the mass constraint: denoting by $|\Omega_{h_u}|=M$, $|A_u|=m$, we modify the sequence $\{z_j\}_{z\in\N}$ and we construct a new sequence $\{u_j\}_{j\in\N}\subset\acreg$ such that
\begin{equation}\label{proof:recovery_step5_1}
\lim_{j\to\infty}\| u_j - z_j \|_{L^1(Q_L^+)}=0,
\qquad
\lim_{j\to\infty} | \f(u_j) - \f(z_j)  | =0,
\end{equation}
and
\begin{equation}\label{proof:recovery_step5_2}
|A_{u_j}|=m, \quad |B_{u_j}|=M-m.
\end{equation}

We first adjust the volume of $\Omega_{h_{z_j}}$ by a vertical rescaling: namely, we take $\lambda_j\coloneqq\frac{M}{|\Omega_{h_{z_j}}|}$ (notice that $\lambda_j\to1$ as $j\to\infty$) and we let $h_j\coloneqq\lambda_j h_{z_j}$, so that $|\Omega_{h_j}|=M$. We now need to adjust the volume of $A_{z_j}$ and $B_{z_j}$. Let
$$
\widetilde{A}_j\coloneqq\bigl\{(x',\lambda_j x_n) \,:\, (x',x_n)\in A_{z_j}\bigr\},
\qquad
\widetilde{B}_j\coloneqq\bigl\{(x',\lambda_j x_n) \,:\, (x',x_n)\in B_{z_j}\bigr\}
$$
be the sets obtained by rescaling vertically $A_{z_j}$ and $B_{z_j}$ by the factor $\lambda_j$; notice that $\widetilde{A}_j\cup\widetilde{B}_j=\Omega_{h_j}$ and therefore $|\widetilde{A}_j|+|\widetilde{B}_j|=M$. We also remark that, as $\lambda_j\to1$ and $A_{z_j}\to A_u$, $B_{z_j}\to B_u$ in $L^1$, we have
$|\widetilde{A}_j|\to m$, $|\widetilde{B}_j|\to M-m$ as $j\to\infty$.

Suppose to fix the ideas that $|\widetilde{A}_{j}|<m$ (we proceed similarly in the other case). Let $\bar{x}\in\Omega_{h_u}$ be a point of density one for $B_u$. Since $\widetilde{B}_{j}\to B_u$ in $L^1$, we have
$$
\lim_{r\to0^+}\lim_{j\to\infty}\frac{|\widetilde{B}_{j}\cap B_r(\bar{x})|}{|B_r|} = 1.
$$
Hence, it is possible to find $r_0>0$ and $j_0\in\N$ such that
$$
\frac34 \leq \frac{|\widetilde{B}_{j}\cap B_{r_0}(\bar{x})|}{|B_{r_0}|} \leq 1 \qquad\text{for all }j\geq j_0.
$$
Therefore, for every $j\geq j_0$ (for a possibly larger $j_0$) it is possible to find $r_j\in(0,r_0)$ such that $|\widetilde{B}_{j}\cap B_{r_j}(\bar{x})|=m-|\widetilde{A}_{j}|>0$, since this quantity tends to zero as $j\to\infty$. We eventually define
$$
u_j(x',x_n)\coloneqq
\begin{cases}
1 & \text{if }(x',x_n)\in B_{r_j}(\bar{x})\cap\widetilde{B}_j, \\[1ex]
z_j(x', x_n/\lambda_j) & \text{if } (x',x_n)\in \Omega_{h_j}\setminus (B_{r_j}(\bar{x})\cap\widetilde{B}_j), \\[1ex]
0 & \text{if } (x',x_n)\in Q_L^+\setminus\Omega_{h_j}.
\end{cases}
$$
We then have $h_{u_j}=h_j=\lambda_j h_{z_j}$, so that $u_j\in\acreg$ and $|\Omega_{h_{u_j}}|=M$. Moreover, $A_{u_j}=\widetilde{A}_{j}\cup (B_{r_j}(\bar{x})\cap\widetilde{B}_j)$, hence $|A_{u_j}|=|\widetilde{A}_j|+|\widetilde{B}_j\cap B_{r_j}(\bar{x})|=m$. Thus \eqref{proof:recovery_step5_2} are satisfied. Finally, also the convergences \eqref{proof:recovery_step5_1} hold, since $\lambda_j\to1$ and $r_j\to0$.
\end{proof}


\section{Regularity of minimizers} \label{sect:regularity}

In this section we will study the regularity of solutions to the minimum problem
\begin{equation}\label{eq:min_pb}
\min \bigl\{ \fbar(u)  \,:\,  u\in\ac,\, |A_u|=m,\, |B_u|=M-m \bigr\},
\end{equation}
whose existence has been established in Theorem~\ref{thm:existence}.

The strategy to prove the regularity of minimizers relies, as it is common in these kind of problems, on the regularity theory for area quasi-minimizing clusters (see \cite[Part~IV]{Mag} and the references therein). Indeed, we will firstly show in Subsection~\ref{subsection:penalization} via a penalization technique that it is possible to remove the volume constraint in \eqref{eq:min_pb} by adding a suitable volume penalization to the functional. Furthermore, \rosso{the term $\nl(u)$ in the energy behaves as a volume-order term thanks to assumption \eqref{eq:ass-nl}.} In view of these two properties, it follows that the partition of $\R^n$ given by $(A_u,B_u,V_u,S)$, for a solution $u$ of \eqref{eq:min_pb}, is a quasi-minimizer cluster for the surface energy
\begin{equation} \label{eq:surfaceen}
\g(u) \coloneqq \int_{J_u\cap Q_L^+}\overline{\Psi}(u^+,u^-)\de\hn, \qquad u\in\ac.
\end{equation}
The precise definition of quasi-minimality in our context is given in Definition~\ref{def:quasimin}  below.

Next, in Subsection~\ref{subsection:regularity2} we exploit the quasi-minimality property to obtain the regularity of minimizers in two dimensions stated in Theorem~\ref{thm:regularity}. Technical difficulties arise from two fronts: on the one hand, we can only compare with clusters that satisfy the constraint of being the subgraph of a function of bounded variation, a fact that poses a severe restriction on the class of competitors. On the other hand, the interfaces between the phases of the cluster are weighted by different surface tension coefficients.
The challenges that arise from these two features prevent us to rely on the standard theory quasi-minimizing clusters, and requires \textit{ad hoc} modifications of the classical proofs. For this reason we develop a regularity theory only in dimension $n=2$, since the general dimensional case requires more refined arguments. We also remark that the regularity properties are obtained under the assumption that the surface tension coefficients satisfy a \emph{strict} triangle inequality (see \eqref{ass:stricttriangle}).

\subsection{Penalization and quasi-minimality} \label{subsection:penalization}

In this section we show that, in any dimension $n\geq 2$, every solution to the minimum problem \eqref{eq:min_pb} is a quasi-minimizer for the surface energy (Proposition~\ref{prop:quasimin}), in the sense of the following definition.

\begin{definition}[Quasi-minimizer] \label{def:quasimin}
We say that $u\in\ac$ is a \emph{quasi-minimizer} for the surface energy $\g$, defined in \eqref{eq:surfaceen}, if there exists $\Lambda>0$ such that for every admissible configuration $v\in\ac$ one has
\begin{equation} \label{eq:quasimin}
\g(u) \leq \g(v) + \Lambda \bigl( |A_u\asymm A_v| + |B_u\asymm B_v| \bigr).
\end{equation}
We denote, for $\Lambda>0$ and $M>0$, by $\mathcal{A}_{\Lambda,M}$ the class of all configurations $u\in\ac$ such that $u$ is a quasi-minimizer for $\g$ with quasi-minimality constant $\Lambda$, and $|\Omega_{h_u}|\leq M$.
\end{definition}

As a first step we remove the mass constraint in \eqref{eq:min_pb} by considering a suitable penalized minimum problem, see \eqref{eq:min_pb_pen_1}. The main idea of the proof is discussed in the Introduction.

\begin{lemma}[Penalization]\label{lem:penalization}
Let $0<m<M<\infty$. Then there exists $\Lambda>0$ such that every solution to the constrained minimum problem \eqref{eq:min_pb} is also a solution to the penalized problem
\begin{equation}\label{eq:min_pb_pen_1}
\min \Bigl\{ \fbar(u) + \Lambda \Bigl( \big| |A_u|-m \big| + \big| |\Omega_{h_u}|- M \big| \Bigr) \,:\,  u\in\ac \Bigr\}.
\end{equation}
\end{lemma}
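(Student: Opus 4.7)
The plan is to argue by contradiction. Assume the statement fails: there exist $\Lambda_k\to+\infty$ and minimizers $u_k\in\ac$ of \eqref{eq:min_pb_pen_1} with $\Lambda=\Lambda_k$ for which
\[
P_k\coloneqq \bigl||A_{u_k}|-m\bigr| + \bigl||\Omega_{h_{u_k}}|-M\bigr| > 0.
\]
Comparing with a fixed solution $u^\star$ of \eqref{eq:min_pb} gives $\fbar(u_k)+\Lambda_k P_k\leq \fbar(u^\star)$, hence $\fbar(u_k)$ is uniformly bounded and $P_k\to0$. Standard $\BV$ compactness and lower semicontinuity of $\fbar$ then yield, along a subsequence, $u_k\to\bar u$ in $L^1(Q_L^+)$ with $\bar u$ itself a solution of \eqref{eq:min_pb}, so in particular $|A_{\bar u}|=m>0$ and $|B_{\bar u}|=M-m>0$. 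To reach a contradiction it suffices, for $k$ large, to construct a competitor $\tilde u_k\in\ac$ with $|A_{\tilde u_k}|=m$, $|\Omega_{h_{\tilde u_k}}|=M$, and satisfying the \emph{linear} energy bound $\fbar(\tilde u_k)\leq \fbar(u_k)+CP_k$ for some $C$ independent of $k$: chained with $\fbar(u^\star)\leq\fbar(\tilde u_k)$ and $\fbar(u_k)+\Lambda_k P_k\leq\fbar(u^\star)$, this forces $\Lambda_k\leq C$, contradicting $\Lambda_k\to\infty$.

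Following the strategy announced in the Introduction, the two constraints are restored one at a time by means of near-identity diffeomorphisms that preserve the graph constraint. First, to adjust $|\Omega_h|$ to $M$, we compose $u_k$ with a \emph{local vertical rescaling}
\[
\Phi_\sigma(x',x_n) \coloneqq \bigl(x',(1+\sigma\,\eta(x'-\bar x^V))\,x_n\bigr),
\]
where $\eta\in C^\infty_c(B_r(0);[0,1])$ with $\eta(0)=1$, and $\bar x^V$ is a point with $h_{\bar u}(\bar x^V)>0$. This fixes the substrate and maps subgraphs to subgraphs, with new profile $(1+\sigma\eta(\cdot-\bar x^V))h_{u_k}$, so $u_k^{(1)}\coloneqq u_k\circ\Phi_\sigma^{-1}\in\ac$. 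The identity
\[
|\Omega_{h_{u_k^{(1)}}}|-|\Omega_{h_{u_k}}| = \sigma\int\eta(x'-\bar x^V)\,h_{u_k}(x')\,dx',
\]
together with the fact that the integral on the right converges to a positive limit, lets us choose $\sigma=\sigma_k$ with $|\sigma_k|\leq CP_k$ such that $|\Omega_{h_{u_k^{(1)}}}|=M$ exactly. Here the vertical direction of $\Phi_\sigma$ is crucial: any radial perturbation as in \cite{EspFus11} or \cite[Lemma~29.14]{Mag} would in general violate the graph constraint.

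Next, $|A|$ is adjusted via a local diffeomorphism $\Psi_\tau$ compactly supported inside the interior of the film, away from both the substrate and the graph $\Gamma_{h_{u_k^{(1)}}}$. Such a $\Psi_\tau$ can be chosen since $|A_{\bar u}|,|B_{\bar u}|>0$ provides small balls compactly contained in $\{0<x_n<h_{\bar u}(x')\}$ that, for $k$ large, stay inside $\Omega_{h_{u_k^{(1)}}}$. Being supported strictly inside the film, $\Psi_\tau$ leaves both $h_{u_k^{(1)}}$ and $|\Omega_h|$ unchanged; and choosing it as in the classical Esposito--Fusco construction (a near-identity map transferring volume between a density-one point of $A_{\bar u}$ and a density-one point of $B_{\bar u}$ inside the chosen ball), $|A|$ varies linearly in $\tau$ with non-zero derivative in the limit $k\to\infty$. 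A suitable $\tau=\tau_k$ with $|\tau_k|\leq CP_k$ restores $|A|=m$, yielding $\tilde u_k$ with both masses equal to the prescribed values.

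The main obstacle is this linear (rather than sublinear) control of the energy increment. A naive ball-swap (converting $B$ into $A$ on a small ball of radius $\rho$) would give an $O(\rho^{n-1})$ perimeter cost for an $O(\rho^n)$ volume change, i.e.\ total cost $\sim P_k^{(n-1)/n}$; the resulting inequality $\Lambda_k P_k\leq C P_k^{(n-1)/n}$ only yields $\Lambda_k\leq C P_k^{-1/n}$, which is not a contradiction with $\Lambda_k\to\infty$. The near-identity diffeomorphisms $\Phi_\sigma,\Psi_\tau$ bypass this obstacle: the push-forward of any finite-perimeter set under such a map changes perimeter by a multiplicative factor $1+O(|\sigma|+|\tau|)$, yielding $\g(\tilde u_k)-\g(u_k)\leq C(|\sigma_k|+|\tau_k|)\,\g(u_k)\leq C' P_k$ by the uniform bound on $\g(u_k)$. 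The Lipschitz control \eqref{eq:ass-nl} and the $L^1$-estimate $\|\tilde u_k-u_k\|_{L^1}\leq C(|\sigma_k|+|\tau_k|)$ give the matching bound for the nonlocal term, closing the contradiction.
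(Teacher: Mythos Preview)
Your overall framework is correct and matches the paper's: take penalized minimizers $u_k$ for $\Lambda_k\to\infty$, extract a limit $\bar u$ solving the constrained problem, and construct competitors that lower the penalized energy. Your Step~1 diffeomorphism $\Phi_\sigma(x',x_n)=(x',(1+\sigma\eta(x'))x_n)$ is a clean alternative to the paper's cylinder-based map for adjusting $|\Omega_h|$ while preserving the subgraph structure, and your observation that a near-identity diffeomorphism gives a \emph{linear} (in $|\sigma|$) perimeter change---avoiding the sublinear ball-swap pitfall---is exactly the right point.

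There is, however, a genuine gap in Step~2. A diffeomorphism $\Psi_\tau$ supported compactly inside the open film $\{0<x_n<h_{\bar u}\}$ can change $|A|$ only if its support meets $\partial^*A$, which inside the film equals $\Gamma^{AB}_{\bar u}$. But nothing guarantees that $\Gamma^{AB}_{\bar u}$ meets the interior of the film: since $h_{\bar u}$ may vanish on a set of positive measure, $\Omega_{h_{\bar u}}^\#$ can be disconnected, and $A_{\bar u}$, $B_{\bar u}$ may occupy disjoint components with $\mathcal H^{n-1}(\Gamma^{AB}_{\bar u})=0$. In that situation any $\Psi_\tau$ supported in a ball lying entirely in $A^{(1)}$ or $B^{(1)}$ leaves $|A|$ unchanged, and your ``transfer between a density-one point of $A$ and a density-one point of $B$'' has no mechanism to move mass. (If instead your ball is chosen to contain both kinds of density-one points, then by connectedness it already contains interior $A/B$ interface---precisely the hypothesis that may fail.)

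The paper confronts exactly this obstruction: when no suitable interior $A/B$ interface point is available (Step~7, Case~2, and analogously Step~6, Case~2), it couples two vertical perturbations localized near points $x_1\in\partial^*A\cap\partial^*V$ and $x_2\in\partial^*B\cap\partial^*V$ on the free surface, choosing the two amplitudes $\sigma_j^1,\sigma_j^2$ so that the first-order changes in $|\Omega_h|$ cancel while the change in $|A|$ does not. Such points always exist once $|A|,|B|>0$. Your argument can be repaired along the same lines: replace the single interior $\Psi_\tau$ by two vertical rescalings of your $\Phi_\sigma$-type at columns $\bar x^A,\bar x^B$ whose vertical slices are predominantly in $A$ and $B$ respectively, and solve the resulting $2\times 2$ linear system for the two amplitudes.
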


\begin{proof}
Let $u\in\ac$ be a minimizer for \eqref{eq:min_pb}, consider a sequence $\{\lambda_j\}_{j\in\N}$ with $\lambda_j\to\infty$ as $j\to\infty$, and $u_j\in\ac$ solving the minimum problem
\begin{equation}\label{eq:min_pb_pen_2}
\min \Bigl\{ \mathscr{H}_{\lambda_j}(v) \coloneqq \fbar(v) + \lambda_j \Bigl( \big| |A_v|-m \big| + \big| |\Omega_{h_v}|- M \big| \Bigr) \,:\,  v\in\ac \Bigr\},
\end{equation}
whose existence can be shown arguing as in the proof of Theorem~\ref{thm:existence}.
We will show that, for $j$ large enough, we have
\begin{equation}\label{eq:mass_matches}
|A_{u_j}|=m,\qquad |\Omega_{h_{u_j}}|=M,
\end{equation}
which will imply that $u$ itself is a solution to \eqref{eq:min_pb_pen_2} for $j$ large, as desired.

To prove \eqref{eq:mass_matches} we argue by contradiction and we show that, if at least one of the equalities in \eqref{eq:mass_matches} is not satisfied, then for $j$ large enough it is possible to construct by a local variation a configuration $\widetilde{u}_j\in\ac$ such that $\mathscr{H}_{\lambda_j}(\widetilde{u}_j) < \mathscr{H}_{\lambda_j}(u_j)$.
The construction of the local variation exploits the same diffeomorphism for both of the mass constraints, applied at different points. In the first part of the proof (Steps~1--4) we thus present the construction of the general diffeomorphism and the corresponding estimates for the change of volume, perimeter and nonlocal energy under this perturbation. To simplify the notation, in the rest of the proof we will write $A_j$, $B_j$, and $\Omega_j$ in place of $A_{u_j}$, $B_{u_j}$, and $\Omega_{h_{u_j}}$ respectively.

\medskip\noindent\textit{Step 1: Definition of the diffeomorphism.}
We denote by $B'_r \coloneqq \{ x'\in\R^{n-1} : |x'|< r \}$ the $(n-1)$-dimensional ball centered at the origin with radius $r>0$, and define for $z\in\R^n$
\[
C^+(z,r) \coloneqq z + \left( B'_r \times (0,r) \right),\qquad
C^-(z,r) \coloneqq z + \left( B'_r \times (-r,0) \right),
\]
and
\[
C(z,r)\coloneqq C^+(z,r)\cup C^-(z,r)\cup \bigl( z + B_r'\times\{0\}\bigr).
\]

We next assume that $z=0$ and we define a family of local perturbations in $C(0,r)$. Precisely, for $|\sigma|<r$ we define the map $\Phi_\sigma:\R^n\to\R^n$ by
\begin{equation}\label{eq:Psi}
\Phi_\sigma(x',x_n)\coloneqq
\begin{cases}
\left(x', x_n + \sigma \bigl( 1-\frac{|x'|}{r} \bigr)\bigl(\frac{x_n}{r} - 1 \bigr) \right)  & \text{if } x\in C^+(0,r),\\[2ex]
\left(x', x_n - \sigma \bigl( 1-\frac{|x'|}{r} \bigr)\bigl(\frac{x_n}{r} + 1 \bigr)   \right) & \text{if } x\in C^-(0,r),\\[2ex]
(x',x_n) & \text{if }x\in\R^n\setminus C(0,r).
\end{cases}
\end{equation}
The function $\Phi_\sigma$ is a vertical rescaling with horizontal and vertical cut-off functions.
The role of the parameter $\sigma$ can be seen from Figure~\ref{fig:diffeo}.
Notice that for $|\sigma|<r$ the function $\Phi_\sigma$ is a bi-Lipschitz map and that
$\Phi_\sigma (C(0,s)) = C(0,s)$. Moreover, it holds
\begin{equation}\label{eq:Jacobian_Phi}
D\Phi(x',x_n) = 
\left(
\begin{tabular}{c|c}
$\mathrm{Id}_{n-1}$ & 0 \\
\hline
$v_\sigma(x',x_n)$ & $1+a_\sigma(x',x_n)$
\end{tabular}
\right),
\end{equation}
where $\mathrm{Id}_{n-1}$ is the $(n-1)\times(n-1)$ identity matrix,
\begin{equation}\label{eq:asigma}
a_\sigma(x',x_n) \coloneqq
\begin{cases}
\bigl( 1-\frac{|x'|}{r} \bigr)\frac{\sigma}{r} & \text{if } (x',x_n)\in  C^+(0,r),\\[2ex]
-\bigl( 1-\frac{|x'|}{r} \bigr)\frac{\sigma}{r} & \text{if } (x',x_n)\in  C^-(0,r),
\end{cases}
\end{equation}
and 
\begin{equation}\label{eq:vsigma}
v_\sigma(x',x_n) \coloneqq
\begin{cases}
-\frac{\sigma}{r}\bigl(\frac{x_n}{r}-1\bigr)\frac{x'}{|x'|} & \text{if } (x',x_n)\in  C^+(0,r),\\[2ex]
\frac{\sigma}{r}\bigl(\frac{x_n}{r}+1\bigr)\frac{x'}{|x'|} & \text{if } (x',x_n)\in  C^-(0,r).
\end{cases}
\end{equation}
When we will perform a perturbation localized in a cylinder centered at a point $z\in\R^n$, we will consider the map $x\mapsto z+\Phi_\sigma(x-z)$.

\begin{figure}
\begin{center}
\includegraphics[scale=0.9]{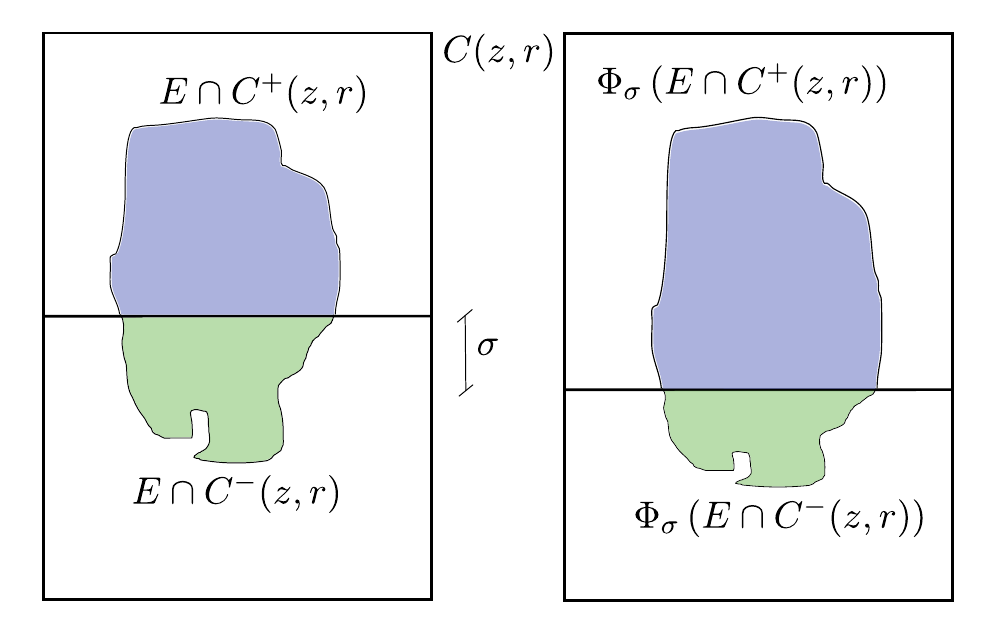}
\caption{The effect of the map $\Phi_\sigma$, for $\sigma>0$, on a set $E$: it stretches the set on $C^+(z,r)$ and it compresses it on $C^-(z,r)$.}
\label{fig:diffeo}
\end{center}
\end{figure}

\medskip\noindent\textit{Step 2: Estimate of the change in volume.}
Let $E\subset C(0,r)$ be a measurable set. We first estimate the maximal change of volume $\left||\Phi_\sigma(E)| - |E|\right|$: by using \eqref{eq:Jacobian_Phi} and \eqref{eq:asigma} we get
\begin{equation}\label{eq:step2_volume_2}
\begin{split}
\Bigl|\, |\Phi_\sigma(E)| - |E| \,\Bigr|
&\leq \frac{|\sigma|}{r}|E\cap C(0,r)|.
\end{split}
\end{equation}

Next, we prove more refined estimates on the change of volume of a set $E$ in the upper and lower cylinders $C^+(0,r)$, $C^-(0,r)$. We first consider the case $\sigma>0$. In this case, the followings hold:
\begin{itemize}
\item[(i)] For every $\varepsilon>0$ and $\sigma\in(0,r)$, if $|E\cap C^+(0,r)|<\varepsilon  r^n$ then
\begin{equation}\label{eq:step2_est_volume_2}
0\leq |\Phi_\sigma(E\cap C^+(0,r))| - |E\cap C^+(0,r))| \leq  U(\varepsilon) |\sigma| r^{n-1},
\end{equation}
where
\[
 U(\varepsilon)\coloneqq \left[\, 1 - \frac{n-1}{n}\Bigl( \frac{\varepsilon}{\omega_{n-1}} \Bigr)^{\frac{1}{n-1}} \,\right] \e.
\]
\item[(ii)] For every $\mu\in(0,\omega_{n-1})$ and $\sigma\in(0,r)$, if $|E\cap C^-(0,r)| > \mu r^n$, then
\begin{equation}\label{eq:step2_est_volume_3}
|\Phi_\sigma(E\cap C^-(0,r))| - |E\cap C^-(0,r))| \leq - L(\mu) |\sigma| r^{n-1}< 0,
\end{equation}
where
\[
L(\mu)\coloneqq \left[\, \frac{1}{n}
	-\left( 1-\frac{\mu}{\omega_{n-1}} \right)
	+\frac{n-1}{n}\left( 1-\frac{\mu}{\omega_{n-1}} \right)^{\frac{n}{n-1}} \,\right]\omega_{n-1}.
\]
\end{itemize}
To prove \eqref{eq:step2_est_volume_2} we notice that by \eqref{eq:Jacobian_Phi} and \eqref{eq:asigma}, and since $\sigma>0$,
\[
|\Phi_\sigma(E\cap C^+(0,r))| - |E\cap C^+(0,r)| = \frac{\sigma}{r}\int_{E\cap C^+(0,r)}\left( 1-\frac{|x'|}{r} \right)\de x \leq \frac{\sigma}{r} \int_{F_\varepsilon} \left( 1-\frac{|x'|}{r} \right)\de x ,
\]
where $F_\varepsilon\coloneqq B'_{r_\varepsilon}\times (0,r)$ and $r_\varepsilon\coloneqq r(\varepsilon/\omega_{n-1})^{\frac{1}{n-1}}$. Similarly we obtain \eqref{eq:step2_est_volume_3} by comparison with $G_{\mu}\coloneqq C^-(0,r)\setminus (B'_{s_{\mu}}\times (-r,0))$, $s_{\mu}\coloneqq r(1-\mu/\omega_{n-1})^{\frac{1}{n-1}}$.

In \eqref{eq:step2_est_volume_2}--\eqref{eq:step2_est_volume_3} we have written $|\sigma|$ in place of $\sigma$ to stress the fact that the same estimates hold also in the case $\sigma<0$ up to exchanging the roles of $C^+(0,r)$ and $C^-(0,r)$, as can be easily checked. Notice that $U(\varepsilon)\to0$ as $\varepsilon\to0$, and that $L(\mu)$ is strictly positive, and more precisely $L(\mu)\in(0,\frac{\omega_{n-1}}{n})$ for every choice of $\mu\in(0,\omega_{n-1})$, with $L(\mu)\to0$ as $\mu\to0$, $L(\mu)\to\frac{\omega_{n-1}}{n}$ as $\mu\to\omega_{n-1}$.

\medskip\noindent\textit{Step 3: Estimate of the change in perimeter.}
Given a countably $\mathcal{H}^{n-1}$-rectifiable set $\Sigma\subset\R^{n}$, by the generalized area formula (see \cite[Theorem~2.91]{AFP}) we have that
\begin{equation}\label{eq:area_formula}
\mathcal{H}^{n-1}(\Phi_\sigma(\Sigma)) - \mathcal{H}^{n-1}(\Sigma)
	=  \int_\Sigma \left( J_{n-1} \mathrm{d}_x^\Sigma\Phi_\sigma - 1 \right) \de\hn(x),
\end{equation}
where $\mathrm{d}_x^\Sigma\Phi_{\sigma}:\pi^\Sigma_x\to\R^n$ denotes the tangential differential of $\Phi_{\sigma}$ at $x\in\Sigma$ along the approximate tangent space $\pi_x^\Sigma$ to $\Sigma$, and the area factor $J_{n-1} \mathrm{d}_x^\Sigma\Phi_\sigma$ is defined as (see \cite[Definition~2.68]{AFP})
\begin{equation}\label{eq:Jacobian}
J_{n-1} \mathrm{d}_x^\Sigma\Phi_\sigma \coloneqq 
\sqrt{\mathrm{det} \bigl( (\mathrm{d}_x^\Sigma\Phi_{\sigma})^* \circ \mathrm{d}_x^\Sigma\Phi_{\sigma} \bigr)}
\end{equation}
(here $(\mathrm{d}_x^\Sigma\Phi_{\sigma})^*$ is the adjoint of the linear map $\mathrm{d}_x^\Sigma\Phi_{\sigma}$).
In order to estimate \eqref{eq:Jacobian}, fix $x\in\Sigma$ and let $\tau_1,\dots,\tau_{n-1}$ be an orthonormal basis for the approximate tangent space $\pi^\Sigma_x$.
By using \eqref{eq:Jacobian_Phi}, \eqref{eq:asigma}, and \eqref{eq:vsigma}, for all $i,j\in\{1,\dots,n-1\}$ we have
\begin{align*}
\bigl( (\mathrm{d}_x^\Sigma\Phi_{\sigma})^* \circ \mathrm{d}_x^\Sigma\Phi_{\sigma} \bigr)_{ij} 
& =	\tau_i\cdot\tau_j + \tau_i^n (\tau_j\cdot w_\sigma(x))+ \tau_j^n (\tau_i\cdot w_\sigma(x)) + (\tau_i\cdot w_\sigma(x))(\tau_j\cdot w_\sigma(x)),
\end{align*}
where $\Phi_{\sigma}=(\Phi_{\sigma}^1,\ldots,\Phi_{\sigma}^n)$ and $\tau_i=(\tau_i^1,\ldots,\tau_i^n)$ denote the components with respect to the canonical base of $\R^n$, and $w_\sigma(x)\coloneqq(v_\sigma(x),a_\sigma(x))$.
By using the fact that $|w_\sigma(x)|\leq \sqrt{2}|\sigma|/r$ and the general formula $\mathrm{det}(I+t A) = 1 + t\,\mathrm{trace}(A) + O(t^2)$ as $t\to0$, we get
\[
\mathrm{det} \bigl( (\mathrm{d}_x^\Sigma\Phi_{\sigma})^* \circ \mathrm{d}_x^\Sigma\Phi_{\sigma} \bigr)
	= 1 + 2\sum_{i=1}^{n-1} \tau_i^n (\tau_i\cdot w_\sigma(x))
		+ O\Bigl(\Bigl(\frac{\sigma}{r}\Bigr)^2\Bigr),
\]
where $\big|O\bigl(\bigl(\frac{\sigma}{r}\bigr)^2\bigr)\big| \leq C\bigl(\frac{\sigma}{r}\bigr)^2$ for a constant $C>0$ independent of $\sigma$, $r$, and of $x\in\Sigma$. Therefore by \eqref{eq:area_formula} we find for $|\sigma|$ sufficiently small
\[
\left| J_{n-1} \mathrm{d}_x^\Sigma\Phi_\sigma - 1 \right|
	=\left| \sqrt{\mathrm{det} \bigl( (\mathrm{d}_x^\Sigma\Phi_{\sigma})^* \circ \mathrm{d}_x^\Sigma\Phi_{\sigma} \bigr)} - 1 \right|
\leq c_0\frac{|\sigma|}{r},
\]
where $c_0>0$ is a dimensional constant. This, together with \eqref{eq:area_formula}, yields
\begin{equation}\label{eq:estimate_coarea}
\left|\, \mathcal{H}^{n-1}(\Phi_\sigma(\Sigma))
	- \mathcal{H}^{n-1}(\Sigma) \,\right|
	\leq c_0\frac{|\sigma|}{r}\mathcal{H}^{n-1}(\Sigma).
\end{equation}

\medskip\noindent\textit{Step 4: Estimate of the change of the term $\nl(\cdot)$.}
Finally, we estimate the change \rosso{in the term $\nl(\cdot)$ of the energy. We note that, by assumption \eqref{eq:ass-nl},} it is enough to get an estimate on $|\Phi_\sigma(E)\asymm E|$ for a general set $E$ with finite perimeter.
By the same computation as in \cite[Proposition~2.7]{AceFusMor13}, writing $\Phi_{\sigma}^{-1}(x',x_n)=(x',x_n+\phi_\sigma(x_n))$ with $|\phi_\sigma(x_n)|\leq |\sigma|$, for $f\in C^1(\R^n)$ we have
\begin{equation}\label{eq:estimate_smooth_NL}
\int_{C(0,r)} |f - f\circ \Phi_\sigma^{-1}|\de x\leq |\sigma|\int_{C(0,r)} |\nabla f(x)|\de x.
\end{equation}
Let now $E\subset\R^n$ be a set with finite perimeter and let $\{f_k\}_{k\in\N}$ be a sequence of smooth functions such that $f_k\to \chi_E$ in $L^1$ and $\|\nabla f_k\|_{L^1}\to \per(E)$. Then also $f_k\circ \Phi_\sigma^{-1}\to \chi_E\circ\Phi_\sigma^{-1}$ in $L^1$. Therefore applying \eqref{eq:estimate_smooth_NL} to the function $f_k$ and passing to the limit as $k\to\infty$ yields
\begin{equation}\label{eq:estimate_NL}
|\Phi_\sigma(E)\asymm E| = \int_{C(0,r)} \big| \chi_E-\chi_E\circ \Phi_{\sigma}^{-1} \big| \de x \leq |\sigma|\per(E).
\end{equation}

\medskip\noindent\textit{Step 5: General strategy.}
We can now go back to the main argument of the proof and show that any solution $u_j$ of the penalized problem \eqref{eq:min_pb_pen_2} satisfies the mass constraints \eqref{eq:mass_matches}, for $j$ large enough.
The idea of the proof is to assume by contradiction that one of the mass constraints in \eqref{eq:mass_matches} is not satisfied, and to construct a perturbation of $u_j$ in a cylinder $C(z,r)$ by means of the maps $\Phi_{\sigma_j}$. More precisely, we will choose a point $z\in\R^n$, a radius $r>0$ and scaling coefficients $\sigma_j\in(-r,r)$ and define
\begin{equation} \label{eq:penaliz_perturb}
\widetilde{u}_j (x) \coloneqq u_j (z + \Phi_{\sigma_j}^{-1}(x-z)).
\end{equation}
This is a local perturbation inside $C(z,r)$ (the center and the radius will be chosen in such a way that the cylinder does not intersect the substrate $S$) such that the phases of the new configuration $\widetilde{u}_j$ are given by
\[
\widetilde{A}_j = z + \Phi_{\sigma_j}(A_j-z),\qquad
\widetilde{B}_j = z + \Phi_{\sigma_j}(B_j-z),\qquad
\widetilde{\Omega}_j = z + \Phi_{\sigma_j}(\Omega_j-z).
\]
Thanks to \eqref{eq:estimate_coarea}, \eqref{eq:estimate_NL}, and \rosso{\eqref{eq:ass-nl}}, we get the estimate
\begin{equation} \label{eq:goal_penalization2}
\begin{split}
\mathscr{H}_{\lambda_j}(\widetilde{u}_j) - \mathscr{H}_{\lambda_j}(u_j)
& \leq \Bigl(\frac{\tilde{c}_0}{r}+\gamma L_{\nl}\Bigr)\bigl(\per(A_j) + \per(B_j)\bigr)|\sigma_j| \\
& \quad + \lambda_j \Bigl( \bigl| |\widetilde{A}_j|-m \bigr| + \bigl| |\widetilde{\Omega}_j|- M \bigr|
	- \bigl| |A_{j}|-m \bigr| - \bigl| |\Omega_{j}|- M \bigr| \Bigr),
\end{split}
\end{equation}
where $\tilde{c}_0$ depends on the constant $c_0$ in \eqref{eq:estimate_coarea} and on the surface tension coefficients.
The goal would be then to show that, if at least one of the volume constraints is not satisfied, then it is possible to choose $z$, $r$ and $\sigma_j$ so that 
\begin{equation}\label{eq:goal_penalization}
\bigl| |\widetilde{A}_j|-m \bigr| + \bigl| |\widetilde{\Omega}_j|- M \bigr|
	- \bigl| |A_{j}|-m \bigr| - \bigl| |\Omega_{j}|- M \bigr| \leq -C |\sigma_j|r^{n-1},
\end{equation}
for some $C>0$ independent of $j$. As $\lambda_j\to\infty$, the combination of \eqref{eq:goal_penalization2} and \eqref{eq:goal_penalization} shows that $\mathscr{H}_{\lambda_j}(\widetilde{u}_j)<\mathscr{H}_{\lambda_j}(u_j)$ for $j$ large enough, which is a contradiction with the minimality of $u_j$ in \eqref{eq:min_pb_pen_2}.

In the next two steps we will implement the previous strategy. We first observe that, by using $u$ as a competitor in the minimum problem \eqref{eq:min_pb_pen_2} and since $\fbar(u)<\infty$, we obtain the bounds
\begin{equation}\label{eq:upper_bound_volume_perimeter}
\sup_{j\in\N} \Bigl( \per(A_{j}) + \lambda_j\big| |A_{j}|-m \big| \Bigr) < \infty,\quad
\sup_{j\in\N} \Bigl( \per(B_{j}) + \lambda_j \big| |B_{j}|-(M-m) \big| \Bigr) < \infty .
\end{equation}
Thus, up to a subsequence (not relabeled), we get that $A_{j}\to A$ and $B_{j}\to B$ in $L^1$, with $|A|=m$, $|B|=M-m$ since $\lambda_j\to\infty$. We also have $\Omega_j\to\Omega\coloneqq A\cup B$. Notice that $\Omega$ is still the subgraph of an admissible profile.

In the following, given a point $x_0\in\R^n$, $r>0$, and a direction $\nu = (\nu',\nu_n)\in\mathbb{S}^{n-1}$ with $\nu_n\neq 0$, we define
\begin{equation} \label{eq:y_r}
y_r\coloneqq x_0 + \left( r\,\cos\left(\arctan\frac{|\nu_n|}{|\nu'|} \right) \right) \nu
\end{equation}
and we consider the corresponding cylinder $C(y_r,r)$. The choice of the point $y_r$ guarantees, since $\nu_n\neq0$, that there exists a constant $c_\nu>0$, independent of $r$, such that if $\nu_n>0$
\begin{equation}\label{eq:cnu}
C^+(y_r,r)\cap \{ (x-x_0)\cdot \nu < 0 \} = \emptyset, \qquad
\big| C^-(y_r,r)\cap \{ (x-x_0)\cdot \nu < 0 \} \big|= 2c_\nu r^n,
\end{equation}
while if $\nu_n<0$
\begin{equation}\label{eq:cnu_1}
C^-(y_r,r)\cap \{ (x-x_0)\cdot \nu < 0 \} = \emptyset, \qquad
\big| C^+(y_r,r)\cap \{ (x-x_0)\cdot \nu < 0 \} \big|= 2c_\nu r^n.
\end{equation}
Notice that the strict positivity of $c_\nu$ is a consequence of the fact that $\nu_n\neq0$.

\medskip\noindent\textit{Step 6: Fixing the total volume.}
Assume by contradiction that $|\Omega_j|\neq M$ for infinitely many $j$. We will consider for simplicity the case $|\Omega_j|>M$ for all $j$, as the other case can be treated by a similar argument.

\smallskip\textbf{Case 1.} Assume that there exists $x_0\in\partial^*\Omega\cap\partial^*B$ such that $\nu_{\Omega}(x_0)\cdot e_n > 0$ (where $\nu_{\Omega}$ denotes the exterior normal). We consider the point $y_r$ and the constant $c_\nu$ defined in \eqref{eq:y_r} and \eqref{eq:cnu} respectively, for $\nu=\nu_{\Omega}(x_0)$ and $r>0$ to be chosen later.

De Giorgi's structure theorem for sets of finite perimeter (\cite[Theorem~3.59]{AFP}) together with \eqref{eq:cnu} ensures that
\[
\lim_{r\to0} \frac{|\Omega\cap C^+(y_r,r)|}{r^n} = \lim_{r\to0} \frac{|A\cap C(y_r,r)|}{r^n} =0,\quad\quad\quad
\lim_{r\to0} \frac{|\Omega\cap C^-(y_r,r)|}{r^n} = 2c_\nu.
\]
Therefore, for every $\e>0$, the fact that $\chi_{\Omega_j}\to\chi_\Omega$, $\chi_{A_j}\to\chi_A$ and $\chi_{B_j}\to\chi_B$ in $L^1$ yields the existence of $r\in(0,1)$ and $j_0\in\N$ such that for all $j\geq j_0$ the following holds:
\begin{equation}\label{eq:stime_Omegaj_step5_case1}
|\Omega_j\cap C^+(y_r,r)|< \e r^n,\quad\quad\quad
|\Omega_j\cap C^-(y_r,r)| > c_\nu r^n,
\end{equation}
\begin{equation}\label{eq:stime_Aj_step5_case1}
|A_j\cap C(y_r,r)|< \e r^n.
\end{equation}
Moreover, for $r$ small enough we can also guarantee that the cylinder $C(y_r,r)$ is contained in the upper half-space and does not intersect the substrate.
We then choose $\sigma_j>0$ and consider the perturbation defined in \eqref{eq:penaliz_perturb} centered at the point $z=y_r$. In view of \eqref{eq:stime_Omegaj_step5_case1}, by using \eqref{eq:step2_est_volume_2} and \eqref{eq:step2_est_volume_3}, we get 
\[
|\widetilde{\Omega}_j| - |\Omega_j| \leq - \bigl( L(c_\nu)-U(\e) \bigr) \sigma_j  r^{n-1}.
\]
On the other hand, by \eqref{eq:step2_volume_2} and \eqref{eq:stime_Aj_step5_case1},
\[
\big| |\widetilde{A}_j|-|A_j|\big| \leq \frac{\sigma_j}{r}|A_j\cap C(y_r,r)| \leq \e \sigma_j r^{n-1}.
\]
Therefore, noting that we can assume $M<|\widetilde{\Omega}_j|<|\Omega_j|$ (it is sufficient to choose $\sigma_j$ and $\e$ small enough), we find
\begin{align*}
\big| |\widetilde{\Omega}_j| - M \bigr|
	- \bigl| |\Omega_j| - M \bigr|
	+ \bigl| |\widetilde{A}_j|-m \bigr|
	- \bigl| |A_j|-m \bigr|
& \leq |\widetilde{\Omega}_j| - |\Omega_j|
	+\bigl| |\widetilde{A}_j| - |A_j| \bigr| \\
& \leq -\bigl(L(c_\nu) - U(\e) - \e \bigr) \sigma_j r^{n-1}.
\end{align*}
By choosing $\e$ sufficiently small, we can ensure that $U(\e)+\e<L(c_\nu)$, which yields \eqref{eq:goal_penalization} and leads to the desired contradiction in this case.

\smallskip\textbf{Case 2.}
If the assumption of the previous case does not hold, we can find a point $x_1\in\partial^*\Omega\cap\partial^*A$ such that $\nu_{\Omega}(x_1)\cdot e_n> 0$. Since $0<m<M$, it is possible to find a second point $x_2\in\partial^* A\cap\partial^* B$ such that $\nu_A(x_2)\cdot e_n < 0$.

We will consider the composition of two perturbations of the form \eqref{eq:penaliz_perturb} localized in two disjoint cylinders $C(y_r^1,r)$ and $C(y_r^2,r)$, where (see also \eqref{eq:y_r})
\begin{align*}
y^1_r & \coloneqq x_1 + \left[ r\,\cos\left(\arctan\frac{|(\nu_{\Omega}(x_1))_n|}{|(\nu_{\Omega}(x_1))'|} \right)\right]
	\nu_{\Omega}(x_1),\\
y^2_r & \coloneqq x_2 + \left[ r\,\cos\left(\arctan\frac{|(\nu_{A}(x_2))_n|}{|(\nu_{A}(x_2))'|} \right)\right]
	\nu_{A}(x_2).
\end{align*}
Let
\[
E^1_r \coloneqq  \{ (x-x_1)\cdot \nu_{\Omega}(x_1) <0 \}  \cap C(y^1_r,r),\quad\quad
E^2_r \coloneqq  \{ (x-x_2)\cdot \nu_{A}(x_2) <0 \} \cap C(y^2_r,r).
\]
Note that $E^1_r\subset C^-(y^1_r,r)$ and that $E^2_r\subset C^+(y^2_r,r)$. We let $\mu\coloneqq c_{\nu_{\Omega}(x_1)}$, where the constant $c_\nu$, for a vector $\nu$, is defined in \eqref{eq:cnu}.
As in the previous case, fixed $\e>0$, we can find $r>0$ and $j_0\in\N$ such that for all $j\geq j_0$ we have
\begin{equation}\label{eq:penaliz_step6a}
|\Omega_j\cap C^+(y^1_r,r)|< \e r^n, \qquad 
|\Omega_j\cap C^-(y^1_r,r)| > \mu r^n,
\end{equation}
\begin{equation}\label{eq:penaliz_step6b}
|V_j\cap C(y^2_r,r)|<\e r^n,
\end{equation}
and
\begin{equation}\label{eq:penaliz_step6c}
|\left( A_j\cap C(y^1_r,r)\right) \triangle E^1_r| < \e r^n,\quad\quad
|\left( A_j\cap C(y^2_r,r)\right) \triangle E^2_r| < \e r^n.
\end{equation}
By reducing the value of $r>0$ we can further assume that the two cylinders $C(y^1_r,r)$ and $C(y^2_r,r)$ are disjoint and do not intersect the substrate.
For a fixed sequence $\{\sigma_j^1\}_{j\in\N}\subset(0,r)$, we define a second sequence $\{\sigma_j^2\}_{j\in\N}$ as
\begin{equation}\label{eq:sigma1_sigma2}
\sigma^2_j \coloneqq \alpha\sigma^1_j, \qquad\text{where }\alpha\coloneqq \frac{\int_{E^1_r-y_r^1} \bigl( 1-\frac{|x'|}{r} \bigr)\de x}{\int_{E^2_r-y_r^2} \bigl( 1-\frac{|x'|}{r} \bigr)\de x}
\end{equation}
for each $j\in\N$. Notice that $\alpha$ is independent of $r$ by scale invariance.
Then, we consider the configuration $\widetilde{u}_j$ obtained by applying to $u_j$ the composition of the two perturbations $y_r^1+\Phi_{\sigma^1_j}(\cdot-y_r^1)$ and $y_r^2+\Phi_{\sigma^2_j}(\cdot-y_r^2)$. We denote the sets of the new partition determined by $\widetilde{u}_j$ by $\widetilde{A}_j$, $\widetilde{B}_j$, $\widetilde{\Omega}_j=\widetilde{A}_j\cup\widetilde{B}_j$, $\widetilde{V_j}$.

We first consider the variation of the volume of $\Omega_j$. By \eqref{eq:penaliz_step6a}, and since $\sigma^1_j>0$, we can apply \eqref{eq:step2_est_volume_2} and \eqref{eq:step2_est_volume_3} and obtain
\[
|\widetilde{\Omega}_j\cap C(y_r^1,r)| - |\Omega_j\cap C(y_r^1,r)| \leq -\bigl(L(\mu)-U(\e)\bigr)\sigma_j^1 r^{n-1}.
\] 
On the other hand, by \eqref{eq:penaliz_step6b} and using \eqref{eq:step2_volume_2} we find
\begin{equation*}
\begin{split}
|\widetilde{\Omega}_j\cap C(y_r^2,r)| - |\Omega_j\cap C(y_r^2,r)|
& = |V_j\cap C(y_r^2,r)| - |\widetilde{V}_j\cap C(y_r^2,r)| \\
& \leq \frac{\sigma_j^2}{r}|V_j\cap C(y_r^2,r)| \leq \e \sigma_j^2 r^{n-1}.
\end{split}
\end{equation*}
By combining the two estimates and recalling \eqref{eq:sigma1_sigma2} it follows that
\begin{equation} \label{eq:penaliz_step6d}
|\widetilde{\Omega}_j| - |\Omega_j| \leq - \Bigl( L(\mu) - U(\e) - \alpha\e \Bigr) \sigma_j^1 r^{n-1}.
\end{equation}

Next, we look at the variation of the volume of $A_j$. We have for $i=1,2$
\begin{align}\label{eq:est_Aj_cylinder}
|\widetilde{A}_j\cap C(y_r^i,r)| - |A_j\cap C(y_r^i,r)| = |\Phi_{\sigma^i_j}(E^i_r-y_r^i)| - |E^i_r| + R_j^i,
\end{align}
where thanks to \eqref{eq:penaliz_step6c} and \eqref{eq:step2_volume_2}
\begin{equation}\label{eq:reminder}
|R_j^i| \leq \big| |\Phi_{\sigma^i_j}((A_j\cap C(y_r^i,r))\asymm E_r^i-y_r^i)|  - |(A_j\cap C(y_r^i,r))\asymm E_r^i| \big| \leq \e\sigma_j^i r^{n-1}.
\end{equation}
Also notice that the choice of $\sigma_j^2$ in \eqref{eq:sigma1_sigma2} guarantees exactly that
\begin{multline}\label{eq:eliminate}
\Bigl( |\Phi_{\sigma^1_j}(E^1_r-y_r^1)| - |E^1_r| \Bigr) + \Bigl( |\Phi_{\sigma^2_j}(E^2_r-y_r^2)| - |E^2_r| \Bigr) \\
=  -\frac{\sigma^1_j}{r}\int_{E^1_r-y_r^1}\Bigl(1-\frac{|x'|}{r}\Bigr)\de x + \frac{\sigma_j^2}{r}\int_{E^2_r-y_r^2}\Bigl(1-\frac{|x'|}{r}\Bigr)\de x = 0.
\end{multline}
Therefore, from \eqref{eq:est_Aj_cylinder}, \eqref{eq:reminder}, and \eqref{eq:eliminate}, we get
\begin{equation}\label{eq:penaliz_step6e}
\begin{split}
\big| |\widetilde{A}_j| - |A_j| \big|
& = \big| |\widetilde{A}_j\cap C(y_r^1,r)| - |A_j\cap C(y_r^1,r)| + |\widetilde{A}_j\cap C(y_r^2,r)| - |A_j\cap C(y_r^2,r)|\big| \\
& \leq \big| |\Phi_{\sigma^1_j}(E^1_r-y_r^1)| - |E^1_r| + |\Phi_{\sigma^2_j}(E^2_r-y_r^2)| - |E^2_r| \big| + |R_j^1| + |R_j^2| \\
& \leq \e (\sigma_j^1+\sigma_j^2) r^{n-1} = \e\bigl(1+\alpha\bigr)\sigma_j^1 r^{n-1}.
\end{split}
\end{equation}

We can now conclude as follows. Similarly to \eqref{eq:goal_penalization2} we find
\begin{align}\label{eq:penaliz_step6e-bis}
\mathscr{H}_{\lambda_j}(\widetilde{u}_j) - \mathscr{H}_{\lambda_j}(u_j)
& \leq \Bigl(\frac{\tilde{c}_0}{r}+\gamma L_{\nl}\Bigr)\bigl(\per(A_j) + \per(B_j)\bigr)(\sigma_j^1+\sigma_j^2) \nonumber\\
& \qquad + \lambda_j \Bigl( \bigl| |\widetilde{A}_j|-m \bigr| + \bigl| |\widetilde{\Omega}_j|- M \bigr| - \bigl| |A_{j}|-m \bigr| - \bigl| |\Omega_{j}|- M \bigr| \Bigr) \nonumber\\
& \leq C(1+\alpha)\sigma_j^1
	+ \lambda_j \Bigl( \bigl| |\widetilde{A}_j|- |A_{j}| \bigr|
	+ |\widetilde{\Omega}_j| - |\Omega_{j}| \Bigr) \nonumber\\
& \leq \biggl[ C(1+\alpha) - \lambda_j\Bigl( L(\mu) - U(\e) - \alpha\e - (1+\alpha)\e \Bigr)r^{n-1} \biggr] \sigma_j^1
\end{align}
where we used \eqref{eq:upper_bound_volume_perimeter} in the second inequality, and \eqref{eq:penaliz_step6d}, \eqref{eq:penaliz_step6e} in the last one. We can therefore choose $\e>0$ small enough so that the constant multiplying $\lambda_j$ is strictly negative; as $\lambda_j\to+\infty$, this provides the desired contradiction with the minimality of $u_j$.

\medskip\noindent\textit{Step 7: Fixing the volume of each phase.}
In this step we conclude the proof by showing that $|A_j|=m$ for $j$ large. Thanks to the previous step, we can assume that $|\Omega_j|=M$ for all $j\in\N$. Suppose by contradiction that $A_j\neq m$ for infinitely many $j$. We consider for simplicity only the case $|A_j|>m$ for all $j$, as the other case can be treated with similar computations.

\smallskip\textbf{Case 1.} Assume that there exists $x_0\in\partial^* A\cap\partial^* B$ such that $\nu_A(x_0)\cdot e_n \neq 0$. We assume to fix the ideas to be in the case $\nu_A(x_0)\cdot e_n>0$; in the other case, it is sufficient to exchange the roles of the upper and lower cylinders in the computations below. We consider, for $r>0$ to be chosen, the point $y_r$ and the constant $c_\nu$ defined in \eqref{eq:y_r} and \eqref{eq:cnu} respectively, corresponding to $\nu=\nu_A(x_0)$.

Fix $\varepsilon>0$. By De Giorgi's structure theorem and the convergence $\chi_{A_j}\to\chi_A$, there exist $r>0$ and $j_0\in\N$ such that for all $j\geq j_0$ it holds
\begin{equation}\label{eq:stime_Aj_step7}
|A_j\cap C^+(y_r,r)|<\varepsilon r^n,\quad\quad\quad
|A_j\cap C^-(y_r,r)| > c_\nu r^n,
\end{equation}
and
\begin{equation}\label{eq:stime_Vj_step7}
|V_j\cap C(y_r,r)| < \varepsilon r^n.
\end{equation}
Moreover, for $r$ small enough we can also guarantee that the cylinder $C(y_r,r)$ is contained in the upper half-space and does not intersect the substrate.
We then choose $\sigma_j>0$ and consider the perturbation defined in \eqref{eq:penaliz_perturb} centered at the point $z=y_r$.
From \eqref{eq:stime_Aj_step7}, \eqref{eq:step2_est_volume_2}, and \eqref{eq:step2_est_volume_3}, we have
\[
|\widetilde{A}_j|- |A_j|\leq -(L(c_\nu) - U(\varepsilon))\sigma_j r^{n-1}.
\]
Moreover, from \eqref{eq:stime_Vj_step7} and \eqref{eq:step2_volume_2} we can estimate
\[
\bigl| |\widetilde{V}_j\cap C(y_r,r)|
	- |V_j\cap C(y_r,r)| \bigr| \leq \varepsilon \sigma_j r^{n-1}.
\]
Thus, using the fact that $|\Omega_j|=M$ for all $j\in\N$, and that $m<|\widetilde{A}_j|<|A_j|$ (by choosing $\sigma_j$ and $\varepsilon$ small enough), we obtain
\begin{align*}
\bigl| |\widetilde{A}_j|-m \bigr| + \bigl| |\widetilde{\Omega}_j|- M \bigr|
	& - \bigl| |A_{j}|-m \bigr| - \bigl| |\Omega_{j}|- M \bigr| 
 = \bigl| |\widetilde{\Omega}_j| - |\Omega_j| \bigr| + |\widetilde{A}_j| - |A_j| \\
& = \bigl| |\widetilde{V}_j\cap C(y_r,r)| - |V_j\cap C(y_r,r)| \bigr| + |\widetilde{A}_j| - |A_j| \\
&\leq - ( L(c_\nu) - U(\varepsilon) - \varepsilon )\sigma_j r^{n-1}.
\end{align*}
Therefore, by choosing $\varepsilon>0$ small enough we get \eqref{eq:goal_penalization}, as desired.

\smallskip\textbf{Case 2.}
Finally, assume that $\nu_A(x)\cdot e_n = 0$ for all $x\in\partial^* A\cap\partial^* B$. The construction in this case is similar to the one in Step~6, Case~2. Since $0<m<M$, we get that there exist $x_1\in\partial^* A\cap\partial^*V$ and $x_2\in\partial^* B\cap\partial^*V$. We let, for $r>0$, $y_r^1$ and $y_r^2$ be the points defined by \eqref{eq:y_r} corresponding to the choice of $\nu_{\Omega}(x_1)$ and $\nu_{\Omega}(x_2)$, respectively, and
\[
E^1_r \coloneqq  \{ (x-x_1)\cdot \nu_{\Omega}(x_1)<0 \}  \cap C(y^1_r,r),\quad\quad
E^2_r \coloneqq  \{ (x-x_2)\cdot \nu_{\Omega}(x_2)<0 \}  \cap C(y^2_r,r).
\]
We let $\mu\coloneqq c_{\nu_{\Omega}(x_1)}>0$, where the constant $c_\nu$, for a vector $\nu$, is defined in \eqref{eq:cnu}.
As in the previous cases, for fixed $\e>0$, we can find $r>0$ and $j_0\in\N$ such that for all $j\geq j_0$ we have
\begin{equation}\label{eq:stime_x1_step7}
|\left( \Omega_j\cap C(y^1_r,r) \right) \triangle E^1_r |<\varepsilon r^n,\quad\quad
|A_j\cap C^-(y^1_r,r)| > \mu r^n,\quad\quad
|A_j\cap C^+(y^1_r,r)|<\varepsilon r^n
\end{equation}
and
\begin{equation}\label{eq:stime_x2_step7}
|\left( \Omega_j\cap C(y^2_r,r) \right) \triangle E^2_r |<\varepsilon r^n,\quad\quad
|A_j\cap C(y^2_r,r)|<\varepsilon r^n.
\end{equation}
By reducing the value of $r>0$ we can further assume that the two cylinders $C(y_r^1,r)$ and $C(y_r^2,r)$ are disjoint and do not intersect the substrate. For a fixed sequence $\{\sigma_j^1\}_{j\in\N}\subset(0,r)$, we define a second sequence $\{\sigma_j^2\}_{j\in\N}$ as
\begin{equation}\label{eq:sigma1_sigma2b}
\sigma^2_j \coloneqq -\alpha\sigma^1_j, \qquad\text{where }\alpha\coloneqq \frac{\int_{E^1_r-y_r^1} \bigl( 1-\frac{|x'|}{r} \bigr)\de x}{\int_{E^2_r-y_r^2} \bigl( 1-\frac{|x'|}{r} \bigr)\de x}
\end{equation}
for each $j\in\N$. Notice that $\sigma_j^2<0$ and that $\alpha$ is independent of $r$ by scale invariance.
Then, we consider the configuration $\widetilde{u}_j$ obtained by applying to $u_j$ the composition of the perturbations $y_r^1+\Phi_{\sigma^1_j}(\cdot-y_r^1)$, $y_r^2+\Phi_{\sigma^2_j}(\cdot-y_r^2)$. We denote the sets of the new partition determined by $\widetilde{u}_j$ by $\widetilde{A}_j$, $\widetilde{B}_j$, $\widetilde{\Omega}_j=\widetilde{A}_j\cup\widetilde{B}_j$, $\widetilde{V_j}$.

We first consider the variation of the volume of $A_j$. By using the last two inequalities in \eqref{eq:stime_x1_step7} and the last inequality in \eqref{eq:stime_x2_step7}, together with \eqref{eq:step2_est_volume_2}, \eqref{eq:step2_est_volume_3}, \eqref{eq:step2_volume_2}, we get
\begin{equation}\label{eq:step7_A}
|\widetilde{A}_j| - |A_j| \leq
	-\bigl( L(\mu) - U(\e) \bigr) \sigma_j^1 r^{n-1}
	+  \e |\sigma_j^2| r^{n-1}.
\end{equation}
The choice \eqref{eq:sigma1_sigma2b} guarantees that
\begin{multline*}
\Bigl( |\Phi_{\sigma^1_j}(E^1_r-y_r^1)| - |E^1_r| \Bigr) + \Bigl( |\Phi_{\sigma^2_j}(E^2_r-y_r^2)| - |E^2_r| \Bigr) \\
=  -\frac{\sigma^1_j}{r}\int_{E^1_r-y_r^1}\Bigl(1-\frac{|x'|}{r}\Bigr)\de x - \frac{\sigma_j^2}{r}\int_{E^2_r-y_r^2}\Bigl(1-\frac{|x'|}{r}\Bigr)\de x = 0,
\end{multline*}
therefore by arguing as in \eqref{eq:penaliz_step6e} we find
\begin{equation}\label{eq:step7_Omega}
\big| |\widetilde{\Omega}_j| - |\Omega_j| \big|
 \leq \e (\sigma_j^1+|\sigma_j^2|) r^{n-1} = \e\bigl(1+\alpha\bigr)\sigma_j^1 r^{n-1},
\end{equation}
and in turn, similarly to \eqref{eq:penaliz_step6e-bis} (using \eqref{eq:upper_bound_volume_perimeter} \eqref{eq:step7_A}, and \eqref{eq:step7_Omega})
\begin{align*}
\mathscr{H}_{\lambda_j}(\widetilde{u}_j) - \mathscr{H}_{\lambda_j}(u_j)
 \leq \biggl[ C(1+\alpha) - \lambda_j\Bigl( L(\mu) - U(\e) - \alpha\e - (1+\alpha)\e \Bigr)r^{n-1} \biggr] \sigma_j^1.
\end{align*}
We can therefore choose $\e>0$ small enough so that the constant multiplying $\lambda_j$ is strictly negative; as $\lambda_j\to+\infty$, this provides the desired contradiction with the minimality of $u_j$.
\end{proof}

The result of Lemma~\ref{lem:penalization} together with the \rosso{Lipschitz continuity assumption \eqref{eq:ass-nl} of the term $\nl(\cdot)$} easily yields the quasi-minimality of solutions to \eqref{eq:min_pb}.

\begin{proposition} \label{prop:quasimin}
Let $u$ be a solution to the minimum problem \eqref{eq:min_pb}. Then $u$ is a quasi-minimizer for the surface energy $\g$, according to Definition~\ref{def:quasimin}.
\end{proposition}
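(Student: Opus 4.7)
The plan is to combine the penalization result of Lemma~\ref{lem:penalization} with the Lipschitz property \eqref{eq:ass-nl} of the volume-order term $\nl$. By Lemma~\ref{lem:penalization}, there exists $\Lambda_0>0$ such that $u$ is a minimizer of the penalized functional
\[
v\mapsto \fbar(v)+\Lambda_0\Bigl(\bigl||A_v|-m\bigr|+\bigl||\Omega_{h_v}|-M\bigr|\Bigr)
\]
over $\ac$. Since $|A_u|=m$ and $|\Omega_{h_u}|=M$, the penalty vanishes at $u$, so testing with an arbitrary competitor $v\in\ac$ and writing $\fbar=\g+\gamma\nl$ yields
\[
\g(u)+\gamma\nl(u)\leq \g(v)+\gamma\nl(v)+\Lambda_0\Bigl(\bigl||A_v|-m\bigr|+\bigl||\Omega_{h_v}|-M\bigr|\Bigr).
\]

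Next, I would exploit two elementary bounds. First, since $|A_u|=m$, we have $\bigl||A_v|-m\bigr|=\bigl||A_v|-|A_u|\bigr|\leq|A_u\triangle A_v|$; similarly, since $\Omega_{h_u}=A_u\cup B_u$ and $\Omega_{h_v}=A_v\cup B_v$ are disjoint unions, $\bigl||\Omega_{h_v}|-M\bigr|\leq|\Omega_{h_u}\triangle\Omega_{h_v}|\leq|A_u\triangle A_v|+|B_u\triangle B_v|$. Second, the Lipschitz estimate \eqref{eq:ass-nl} gives $\gamma\bigl(\nl(v)-\nl(u)\bigr)\leq\gamma L_{\nl}\bigl(|A_u\triangle A_v|+|B_u\triangle B_v|\bigr)$. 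Substituting these into the previous inequality and rearranging, one arrives at
\[
\g(u)\leq \g(v)+\Lambda\bigl(|A_u\triangle A_v|+|B_u\triangle B_v|\bigr),\qquad\Lambda\coloneqq 2\Lambda_0+\gamma L_{\nl},
\]
which is the quasi-minimality of Definition~\ref{def:quasimin}.

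The only caveat to address is that \eqref{eq:ass-nl} requires an a priori volume bound, so the constant $L_{\nl}$ can be fixed only after selecting some $M_0\geq M$ and restricting to competitors with $|\Omega_{h_v}|\leq M_0$. For competitors with $|\Omega_{h_v}|>M_0$, the symmetric difference $|A_u\triangle A_v|+|B_u\triangle B_v|$ is at least $M_0-M$, so the quasi-minimality inequality holds trivially by enlarging $\Lambda$ so that $\g(u)\leq\Lambda(M_0-M)$ (recall $\g(v)\geq 0$). This is minor bookkeeping; the substantive content is entirely contained in the penalization lemma, and no further ideas are required.
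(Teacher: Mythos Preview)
Your proposal is correct and follows exactly the approach the paper indicates: the paper simply states that the result follows from Lemma~\ref{lem:penalization} together with the Lipschitz continuity assumption \eqref{eq:ass-nl}, and you have spelled out precisely this argument. Your handling of the volume-bound dependence of $L_{\nl}$ is a legitimate technical point that the paper leaves implicit.
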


A standard truncation argument shows that quasi-minimizers are bounded.

\begin{proposition}[Boundedness] \label{prop:truncation}
Let $u\in\mathcal{A}_{\Lambda,M}$. Then $h_u\in L^\infty(Q_L)$.
\end{proposition}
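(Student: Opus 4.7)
The plan is to run a horizontal truncation argument: compare $u$ with the configuration obtained by cutting the film at height $T$, replacing everything above with void, and then combine the quasi-minimality inequality with the relative isoperimetric inequality to obtain a differential inequality forcing the mass above level $T$ to vanish once $T$ is sufficiently large.

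Set $E_T := (A_u^\# \cup B_u^\#) \cap \{x_n > T\}$, $m(T) := |E_T \cap Q_L^+|$, and $p(T) := \hn(E_T \cap \{x_n = T\} \cap Q_L)$. Since $|\Omega_{h_u}| \leq M$, the function $m$ is nonincreasing, $m(T) \to 0$ as $T \to \infty$, and by the coarea formula $-m'(T) = p(T)$ for $\leb^1$-a.e.\ $T > 0$. Define the truncated competitor $v_T \in \ac$ by taking $h_{v_T} := \min\{h_u, T\}$, $v_T := u$ below the truncation level, $v_T := 0$ above it, and $v_T := 2$ on the substrate. Then $|A_u \asymm A_{v_T}| + |B_u \asymm B_{v_T}| = m(T)$.

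The contributions to $\g(u) - \g(v_T)$ from the region $\{x_n < T\}$ cancel, and for a.e.\ $T > 0$ one has $\hn(J_u \cap \{x_n = T\}) = 0$. Above $T$, only $u$ has interfaces, contributing at least $\underline{\sigma}\,\hn(\partial^* E_T^\# \cap \{x_n > T\} \cap Q_L)$, where $\underline{\sigma} := \min\{\overline{\Psi}(1,0), \overline{\Psi}(-1,0)\} > 0$ and I use that $\partial^*(A_u^\# \cup B_u^\#)$ above $T$ is contained in $\Gamma_u^A \cup \Gamma_u^B$; the new horizontal interface of $v_T$ at $\{x_n = T\}$ contributes at most $\overline{\sigma}\,p(T)$ with $\overline{\sigma} := \max\{\overline{\Psi}(1,0), \overline{\Psi}(-1,0)\}$. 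Applying \eqref{eq:quasimin} with $v_T$ as competitor and rearranging yields
\[
\per(E_T^\#; Q_L \times \R) = \hn(\partial^* E_T^\# \cap \{x_n > T\} \cap Q_L) + p(T) \leq C_1\, p(T) + C_2\, m(T),
\]
for constants $C_1, C_2$ depending only on the surface tensions and on $\Lambda$.

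For $T$ large the volume $m(T)$ is small, so the relative isoperimetric inequality on the periodic geometry $(\R^{n-1}/L\Z^{n-1}) \times \R$ (which, for small volumes, reduces to the Euclidean inequality by a localization argument) gives $\per(E_T^\#; Q_L \times \R) \geq c_n\, m(T)^{(n-1)/n}$. Combining with the previous bound and using $p(T) = -m'(T)$ leads to
\[
c_n\, m(T)^{(n-1)/n} \leq -C_1\, m'(T) + C_2\, m(T) \quad \text{for a.e.\ $T$ large}.
\]
Choosing $T_0$ so that $C_2\, m(T_0)^{1/n} \leq c_n/2$ and absorbing the lower order term gives $-m'(T) \geq c\, m(T)^{(n-1)/n}$ with $c := c_n/(2C_1)$; integrating shows $m(T) = 0$ for $T \geq T_0 + \frac{n}{c} m(T_0)^{1/n}$, which means $h_u \leq T^*$ a.e.\ for some finite $T^*$. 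The main technical points are the careful accounting of cancellations in $\g(u)-\g(v_T)$ at the truncation slice (handled by choosing $T$ outside an $\leb^1$-negligible set where horizontal slicing is well-behaved) and the invocation of the isoperimetric inequality in the periodic strip, neither of which presents serious difficulty once the truncation competitor is in place.
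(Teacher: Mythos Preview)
Your proof is correct and follows essentially the same approach as the paper: both truncate at a horizontal level, compare with the competitor $v=u\chi_{\{x_n\leq T\}}$ via quasi-minimality, combine with an isoperimetric-type lower bound to obtain the differential inequality $m(T)^{(n-1)/n}\leq -Cm'(T)$ (after absorbing the volume term for $m$ small), and integrate. The paper's proof is more terse, simply stating that ``by standard computations'' the differential inequality holds, whereas you spell out the bookkeeping with $\underline{\sigma},\overline{\sigma}$ and the periodic isoperimetric inequality, but the underlying argument is identical.
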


\begin{proof}
The boundedness is a consequence of the finiteness of the volume of $\Omega_{h_u}$ and of the following elimination-type property: there exists $\varepsilon>0$ and $t_0>0$, depending on $n$, $\Lambda$ and on the surface tension coefficients, such that if $u\in\mathcal{A}_{\Lambda,M}$ and $|\Omega_{h_u}\cap\{x_n>\bar{t}\}| <\e$ for some $\bar{t}>0$, then $|\Omega_{h_u}\cap\{x_n>\bar{t}+t_0\}| =0$.

To show this, \rosso{define $m(t)\coloneqq |\Omega_{h_u}\cap \{x_n> t\}|$, suppose that $m(\bar{t})<\e$} and notice that for a.e.\ $t>0$ it holds $m'(t)=-\mathcal{H}^{n-1}(\Omega_{h_u}^{(1)}\cap \{x_n= t\})$. By comparing with the configuration $v\coloneqq u\chi_{\{x_n\leq t\}}$, using the quasi-minimality of $u$ and choosing $\e$ small enough, by standard computations one obtains the differential inequality $m(t)^{\frac{n-1}{n}} \leq -Cm'(t)$ for a.e. $t>\overline{t}$, that allows to conclude that $m(t)=0$ for $t>\overline{t}+C\varepsilon^{\frac{1}{n}}$ by integration.
\end{proof}


\subsection{Partial regularity of quasi-minimizers in dimension 2} \label{subsection:regularity2}

From now on we assume that the dimension of the space is $n=2$. We also assume that the surface tension coefficients satisfy the \emph{strict} triangle inequalities
\begin{equation} \label{ass:stricttriangle}
\sigma_{AB} < \sigma_A + \sigma_B, \quad
\sigma_{A} < \sigma_B + \sigma_{AB}, \quad
\sigma_{B} < \sigma_A + \sigma_{AB}.
\end{equation}
Under these assumptions we will show a series of regularity properties satisfied by a quasi-minimizer $u$ of the surface energy $\g$, according to Definition~\ref{def:quasimin}. Notice that for $u\in\mathcal{A}_{\Lambda,M}$ we have a uniform bound
$$
\sup_{u\in\mathcal{A}_{\Lambda,M}}\Bigl( |\Omega_{h_u}| + \hu(\Gamma_{h_u}) \Bigr) \leq C
$$
for a constant $C$ depending on $M$, $\Lambda$ and on the surface tension coefficients. In two dimensions, this bound immediately yields boundedness from above of the film, that is there exists $\overline{M}>0$ (depending on $M$, $\Lambda$ and on the surface tension coefficients) such that
\begin{equation} \label{eq:bound}
\overline{\Omega}_{h_u} \subset [0,L]\times[0,\overline{M}] \qquad\text{for all }u\in\mathcal{A}_{\Lambda,M}. 
\end{equation}
The first regularity fact that we establish is an elimination property for the empty region above the film, in the spirit of \cite{Leo01}.

\begin{proposition}[Infiltration for $V$] \label{prop:infiltrationV}
Let $u\in\mathcal{A}_{\Lambda,M}$. There exists $\e_1>0$, depending on $\Lambda$, $M$, and on the surface tension coefficients, such that if for some $z_0\in\R^2$ and $r\in(0,1)$
\begin{equation} \label{eq:infiltration_V1}
|V_u^\#\cap \q_r(z_0)| < \e_1 r^2,
\end{equation}
where $\q_r(z_0) \coloneqq z_0+r\q$ and $\q\coloneqq (-\frac12,\frac12)\times(-\frac12,\frac12)$, then
\begin{equation} \label{eq:infiltration_V2}
|V_u^\#\cap \q_{\frac{r}{2}}(z_0)| = 0.
\end{equation}
\end{proposition}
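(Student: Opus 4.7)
The plan is to adapt Leonardi's elimination-of-minority-phase argument~\cite{Leo01} to the present graph-constrained setting. For $t\in[r/2,r]$ set $m(t)\coloneqq|V_u^\#\cap\q_t(z_0)|$; by Fubini with respect to the $L^\infty$-distance from $z_0$, $m$ is absolutely continuous with $2m'(t)=\hu(V_u^\#\cap\partial\q_t(z_0))$ for a.e.\ $t$. The aim is the differential inequality
\begin{equation*}
m(t)^{1/2}\le C\bigl(m'(t)+\Lambda m(t)\bigr)\qquad\text{for a.e. }t\in(r/2,r),
\end{equation*}
for a constant $C$ depending only on $\overline M$ (the a priori bound from Proposition~\ref{prop:truncation}) and on the surface tension coefficients. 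Once this is established, choosing $\varepsilon_1$ small enough that $C\Lambda\varepsilon_1^{1/2}\le\tfrac12$ absorbs the zeroth-order term, giving $(m^{1/2})'\ge(2C)^{-1}$ on $\{m>0\}$; integrating on $(r/2,r)$ then forces $m(r/2)=0$ whenever $m(r)<\varepsilon_1 r^2$.

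When $z_0^2+r/2\le0$ the claim is trivial, and when $\q_r(z_0)$ meets the substrate a minor adaptation suffices, so I focus on $\q_r(z_0)\subset\R^2_+$. For a.e.\ $t\in(r/2,r)$ I construct a graph-respecting competitor $v_t\in\ac$ by modifying $u$ only over the ``bad'' horizontal subset
\begin{equation*}
E_t\coloneqq\bigl\{x_1\in(z_0^1-t/2,z_0^1+t/2):h_u(x_1)<z_0^2+t/2\bigr\},
\end{equation*}
setting $\tilde h\coloneqq z_0^2+t/2$ on $E_t$ and $\tilde h\coloneqq h_u$ elsewhere, and filling the new subgraph region $R_t\coloneqq\{(x_1,x_2):x_1\in E_t,\,h_u(x_1)<x_2<z_0^2+t/2\}$ with whichever of $A_u$, $B_u$ yields the smaller surface cost — a choice that is strictly favourable by virtue of the strict triangle inequalities~\eqref{ass:stricttriangle}. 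A crucial structural feature of this construction is that $\tilde h$ does not introduce any new large jumps: at a continuity point $x^*\in\partial E_t$ one has $h_u(x^*)=z_0^2+t/2=\tilde h(x^*\pm)$, while at any jump point of $h_u$ the jump of $\tilde h$ is dominated by that of $h_u$. In particular no uncontrolled lateral jumps at $z_0^1\pm t/2$ need to be added, in contrast to the naive ``raise $h_u$ to the cube top over the whole interval'' modification, whose lateral jumps would have length of order $\overline M$ and could not be absorbed.

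Plugging $v_t$ into the quasi-minimality inequality $\g(u)\le\g(v_t)+\Lambda(|A_u\triangle A_{v_t}|+|B_u\triangle B_{v_t}|)$ and bookkeeping the interface changes — the portion of $\Gamma_{h_u}$ over $E_t$ lying below $z_0^2+t/2$ (of $\hu$-measure at least $|E_t|$) is replaced by a flat top of length $|E_t|$; part of the former film-void interface becomes (cheaper) internal $AB$ interface; jump contributions only decrease — and optimizing the filling phase via~\eqref{ass:stricttriangle}, one obtains an estimate of the form
\begin{equation*}
\hu\bigl(\Gamma_{h_u}\cap\q_t(z_0)\bigr)\le C_1\bigl(|E_t|+\Lambda|R_t|\bigr).
\end{equation*}
Observing that $E_t$ coincides (for a.e.\ $t$) with the trace of $V_u^\#$ on the upper face of $\q_t(z_0)$ gives $|E_t|\le 2m'(t)$, while Proposition~\ref{prop:truncation} yields $|R_t|\le m(t)+\overline M|E_t|\le m(t)+2\overline M m'(t)$. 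Combining these with the relative isoperimetric inequality in the square $\q_t(z_0)$, namely $m(t)^{1/2}\le C_2\,\hu(\Gamma_{h_u}\cap\q_t(z_0))$ (valid once $m(t)\le t^2/2$, which is ensured by the smallness of $\varepsilon_1$), produces the target differential inequality and concludes the proof. The main obstacle is precisely the construction of a competitor respecting the subgraph constraint without introducing uncontrollable lateral jumps; this is resolved by the ``bad set'' modification above, which replaces the classical ball-truncation step of Leonardi's argument.
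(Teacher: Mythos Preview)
Your competitor is essentially the same as the paper's (you raise the profile to the level $z_0^2+t/2$ on the set $E_t$ and fill with the majority phase), but your choice of the function $m(t)=|V_u^\#\cap\q_t(z_0)|$ creates a gap that you do not close. The assertion ``no uncontrolled lateral jumps at $z_0^1\pm t/2$ need to be added'' is not correct. Your argument that at $x^*\in\partial E_t$ one has $h_u(x^*)=z_0^2+t/2$ applies only to \emph{interior} boundary points of $E_t$ inside the open interval $(z_0^1-t/2,z_0^1+t/2)$; it says nothing about the endpoints $z_0^1\pm t/2$ themselves. If $h_u$ is continuous at $z_0^1+t/2$ with $h_u(z_0^1+t/2)<z_0^2+t/2$, then $\tilde h$ has a genuine new jump there of size $z_0^2+t/2-h_u(z_0^1+t/2)$, and this cost must appear on the right-hand side of the quasi-minimality inequality. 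When additionally $h_u(z_0^1+t/2)<z_0^2-t/2$, this jump extends strictly \emph{below} the square $\q_t(z_0)$, so its length is \emph{not} dominated by $2m'(t)=\hu(V^{(1)}\cap\partial\q_t(z_0))$; hence your displayed inequality $\hu(\Gamma_{h_u}\cap\q_t(z_0))\le C_1(|E_t|+\Lambda|R_t|)$ does not follow. Your estimate $|R_t|\le m(t)+\overline M|E_t|$ controls the \emph{volume} error but does not help with this one-dimensional jump length.

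The paper avoids precisely this difficulty by a two-step strategy. In Step~1 it runs the same filling argument, but with $m(s)\coloneqq|V\cap\cil^-_s(z_0)|$ for the semi-infinite strip $\cil^-_s(z_0)=z_0+(-s/2,s/2)\times(-\infty,s/2)$. The point is that $\partial\cil^-_s(z_0)$ contains the \emph{full} vertical half-lines $\{z_0^1\pm s/2\}\times(-\infty,z_0^2+s/2)$, so the lateral jumps of the competitor are entirely captured by $m'(s)=\tfrac12\hu(V^{(1)}\cap\partial\cil^-_s(z_0))$; the differential inequality then goes through cleanly. Step~2 shows, by a separate competitor argument together with the uniform height bound $\overline M$ and the subgraph geometry of $V$, that $|V\cap\q_r(z_0)|<\e_1 r^2$ forces $|V\cap\cil^-_{3r/4}(z_0)|<\tilde\e_1(3r/4)^2$, which feeds back into Step~1. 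Your one-step approach with squares can in fact be repaired (one can show the ``bad'' set of $t\in(r/2,r)$ where $h_u(z_0^1\pm t/2)<z_0^2-t/2$ has measure $O(\e_1 r)$, since on that set $2m'(t)\ge t\ge r/2$, and then integrate the differential inequality only over the good set), but this fix is absent from your write-up and is exactly the content that the paper's two-step decomposition supplies.
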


\begin{proof}
Let $\cil^-\coloneqq \bigl(-\frac12,\frac12\bigr)\times\bigl(-\infty,\frac12\bigr)$ and, for $z_0\in\R^2$ and $r>0$, set $\cil_r^- \coloneqq r\cil^-$, $\cil_r^-(z_0) \coloneqq z_0+\cil_r^-$. Along the proof, to lighten the notation we will drop the subscript $u$ from the sets \eqref{ABV} of the partition determined by $u$ and from the corresponding interfaces \eqref{interfaces1}--\eqref{interfaces2}. The proof is divided into two steps.

\medskip\noindent\textit{Step 1: infiltration in strips.} We first show that there exists $\tilde{\e}_1>0$, depending on $\Lambda$ and on the surface tension coefficients, such that for every $z_0\in\R^2$ and $r\in(0,1)$ the following implication holds:
\begin{equation} \label{eq:infiltration_V5}
|V^\#\cap \cil^-_r(z_0)| < \tilde{\e}_1 r^2
\quad\quad\Longrightarrow\quad\quad
|V^\#\cap \cil^-_{\frac34r}(z_0)| = 0.
\end{equation}
We assume for notation convenience, and without loss of generality, that $\cil^-_r(z_0)\subset(0,L)\times\R$; the general case is obtained by periodicity.
For $s\in[0,r]$ we let $m(s)\coloneqq| V\cap \cil^-_s(z_0)|$, so that $m(r)\leq \tilde{\e}_1r^2$ by the assumption. The function $m(s)$ is monotone nondecreasing, with 
\begin{equation} \label{proof:infiltration0}
m'(s)=\frac12\hu(\partial\cil_s^-(z_0)\cap V^{(1)}) \qquad\text{for $\leb^1$-almost every }s>0.
\end{equation}
Fix now $s\in(0,r)$ such that \eqref{proof:infiltration0} holds and $\hu(J_u\cap\partial \cil^-_s(z_0))=0$ (notice that $\leb^1$-almost every $s>0$ has this property). We define a competitor by ``filling'' the empty region in $\cil^-_s(z_0)$ above the substrate by the phase $A$ or $B$. More precisely, assume that
\begin{equation} \label{proof:infiltration1}
\hu(\Gamma^B\cap\cil^-_s(z_0)) \leq \hu(\Gamma^A\cap\cil^-_s(z_0))
\end{equation}
and define
$$
u_s(z)\coloneqq
\begin{cases}
1 & \text{if }z\in V\cap \cil_s^-(z_0),\\
u(z) & \text{otherwise}
\end{cases}
$$
(which corresponds to fill the region $V\cap\cil_s^-(z_0)$ by the phase $A$). The proof in the other case, when one has the opposite inequality in \eqref{proof:infiltration1}, follows similarly by filling $V\cap\cil_s^-(z_0)$ by the phase $B$.

We then have $A_{u_s}=A\cup(V\cap\cil_s^-(z_0))$, $B_{u_s}=B$, and $A_{u_s}\cup B_{u_s}$ is the subgraph of an admissible profile; therefore $u_s\in\ac$ is an admissible configuration and by quasi-minimality of $u$ we find
\begin{equation} \label{proof:infiltration2}
\begin{split}
\g(u) &\leq \g(u_s) + \Lambda\bigl(|A_{u_s}\asymm A| + |B_{u_s}\asymm B|\bigr) \\
& \leq \g(u) - \sigma_A\hu(\Gamma^A\cap\cil_s^-(z_0)) + (\sigma_{AB}-\sigma_B)\hu(\Gamma^B\cap\cil_s^-(z_0)) \\
& \qquad + \sigma_A\hu(V^{(1)}\cap\partial\cil_s^-(z_0)) + (\sigma_{AS}-\sigma_S)\hu(S^V\cap\cil_s^-(z_0)) + \Lambda m(s).
\end{split}
\end{equation}
Observe now that by \eqref{proof:infiltration1}
\begin{align*}
- \sigma_A\hu(\Gamma^A & \cap\cil_s^-(z_0)) + (\sigma_{AB}-\sigma_B)\hu(\Gamma^B\cap\cil_s^-(z_0)) \\
& \leq -\sigma_A\hu(\Gamma^A\cap \cil_s^-(z_0)) + \max\{\sigma_{AB}-\sigma_B,0\}\hu(\Gamma^A\cap\cil_s^-(z_0))  \\
& = - 2c_1\hu(\Gamma^A\cap\cil_s^-(z_0)) 
\leq - c_1\bigl(\hu(\Gamma^A\cap\cil_s^-(z_0))+\hu(\Gamma^B\cap\cil_s^-(z_0))\bigr)
\end{align*}
where we set $c_1\coloneqq -\frac12\max\{\sigma_{AB}-\sigma_B-\sigma_A,-\sigma_A\}>0$ thanks to the strict triangular inequality between the coefficients.
Hence
\begin{equation} \label{proof:infiltration3}
\begin{split}
- \sigma_A\hu(&\Gamma^A\cap\cil_s^-(z_0)) + (\sigma_{AB}-\sigma_B)\hu(\Gamma^B\cap\cil_s^-(z_0)) -\sigma_S\hu(S^V\cap \cil_s^-(z_0))\\
& \leq -\min\{c_1,\sigma_S\} \bigl( \hu(\Gamma^A\cap\cil_s^-(z_0))+\hu(\Gamma^B\cap\cil_s^-(z_0)) + \hu(S^V\cap \cil_s^-(z_0)) \bigr)\\
& = -\min\{c_1,\sigma_S\} \per(V;\cil_s^-(z_0))
 = -\min\{c_1,\sigma_S\} \bigl( \per(V\cap \cil_s^-(z_0)) - 2m'(s)\bigr) \\
& \leq -c_2 |V\cap \cil_s^-(z_0)|^\frac12 + c_3 m'(s)
= -c_2 m(s)^\frac12 + c_3 m'(s),
\end{split}
\end{equation}
where we used the isoperimetric inequality in the last inequality, and $c_2$, $c_3$ are positive constants depending on the surface tension coefficients. Furthermore, by the geometry of the set $V$ we have $\hu(S^V\cap\cil_s^-(z_0))\leq \hu(V^{(1)}\cap \partial\cil_s^-(z_0))$, hence
\begin{equation} \label{proof:infiltration4}
\begin{split}
\sigma_A\hu(V^{(1)}\cap\partial\cil_s^-(z_0)) + \sigma_{AS} \hu(S^V\cap\cil_s^-(z_0))
& \leq (\sigma_A+\sigma_{AS}) \hu(V^{(1)}\cap\partial\cil_s^-(z_0)) \\
& = 2(\sigma_A+\sigma_{AS}) m'(s).
\end{split}
\end{equation}
By inserting \eqref{proof:infiltration3}--\eqref{proof:infiltration4} into \eqref{proof:infiltration2} and setting $c_4\coloneqq c_3+2(\sigma_A+\sigma_{AS})$ we find
\begin{align*} 
c_2 m(s)^\frac12
 \leq c_4 m'(s) + \Lambda m(s)
\leq c_4m'(s) + \Lambda m(s)^\frac12 m(r)^\frac12  \leq c_4m'(s) + \sqrt{\tilde{\e}_1} \Lambda m(s)^\frac12 r.
\end{align*}
The previous estimate holds for almost every $s\in(0,r)$ obeying \eqref{proof:infiltration1}, but one can obtain the same estimate also for almost every $s$ satisfying the opposite inequality (with possibly different constants $c_2$, $c_4$). Therefore
\begin{equation*}
\bigl( c_2 - \sqrt{\tilde{\e}_1}\Lambda \bigr)m(s)^\frac12 \leq  c_4 m'(s) \quad\text{for a.e. }s\in(0,r).
\end{equation*}
Then, by choosing $\tilde{\e}_1>0$ small enough, depending on $\Lambda$ and on the surface tension coefficients, we obtain that
\begin{equation*}
m(s)^\frac12 \leq C m'(s) \quad\text{for a.e. }s\in(0,r),
\end{equation*}
for a constant $C>0$ depending only on the surface tension coefficients. From this it is easy to obtain by integration that $m(\frac34r)=0$, which proves the implication \eqref{eq:infiltration_V5}.

\medskip\noindent\textit{Step 2.}
We now claim that there exists $\e_1>0$ such that
\begin{equation} \label{proof:infiltration5}
|V^\#\cap \q_r(z_0)| < \e_1 r^2
\qquad\Longrightarrow\qquad
|V^\#\cap \cil^-_{\frac34 r}(z_0)| < \tilde{\e}_1 \Bigl(\frac34r\Bigr)^2,
\end{equation}
where $\tilde{\e}_1$ is given by the previous step. Once this claim is proved, the conclusion of the proposition follows easily by combining this property with Step~1.
Let us now prove \eqref{proof:infiltration5}. As before by periodicity we can assume $\q_r(z_0)\subset(0,L)\times\R$. We denote by $\q'_r(z_0)\coloneqq z_0 + (-\frac{r}{2},\frac{r}{2})\times\{-\frac{r}{2}\}$ the bottom side of the square $\q_r(z_0)$. We first observe that, by the geometry of the set $V$, we have
\begin{equation} \label{proof:infiltration6}
\hu(V^{(1)}\cap \q_r'(z_0)) \leq \frac{|V\cap\q_r(z_0)|}{r} \leq \e_1r.
\end{equation}
Then (assuming without loss of generality that $\e_1<\frac18$) we can find $\rho\in(\frac{3}{4}r,r)$ such that the two points $z_0+(-\frac{\rho}{2},-\frac{r}{2})$, $z_0+(\frac{\rho}{2},-\frac{r}{2})$, on $\q_r'(z_0)$, are not points of density one for $V$. We then consider the strip $U\coloneqq \cil_{\rho}^-(z_0)\setminus \overline{\q_r(z_0)}$
and, by the choice of $\rho$, it follows that the lateral boundary of $U$ is outside $V^{(1)}$, hence
\begin{equation} \label{proof:infiltration6b}
\hu(V^{(1)}\cap\partial U) \leq \hu(V^{(1)}\cap \q'_r(z_0)) \xupref{proof:infiltration6}{\leq}\e_1 r.
\end{equation}
We can further assume that
\begin{equation} \label{proof:infiltration6c}
\hu(J_u\cap\partial\cil_{\rho}^-(z_0))=0,
\end{equation}
since this is valid for $\leb^1$-almost every $\rho\in(0,r)$.
To continue, we assume that
\begin{equation} \label{proof:infiltration7}
\hu(\Gamma^A\cap U) \leq \hu(\Gamma^B\cap U)
\end{equation}
and we construct a competitor by filling the region $V\cap U$ by the phase $B$ (the proof in the other case, when one has the opposite inequality in \eqref{proof:infiltration7}, follows similarly by filling $V\cap U$ by the phase $A$): we define
\begin{equation*}
\tilde{u}(z)\coloneqq
\begin{cases}
-1 & \text{if }z\in V\cap U,\\
u(z) & \text{otherwise.}
\end{cases}
\end{equation*}
Since $\tilde{u}\in\ac$ is admissible, by quasi-minimality of $u$ we find, similarly to \eqref{proof:infiltration2},
\begin{equation} \label{proof:infiltration8}
\begin{split}
\g(u) &\leq \g(u) + (\sigma_{AB}-\sigma_A)\hu(\Gamma^A\cap U) - \sigma_B\hu(\Gamma^B\cap U) \\
& \qquad + \sigma_B\hu(V^{(1)}\cap\partial U) + (\sigma_{BS}-\sigma_S)\hu(S^V\cap U) + \Lambda |V\cap U|,
\end{split}
\end{equation}
and in turn, arguing similarly to the proof of \eqref{proof:infiltration3}, using the assumption \eqref{proof:infiltration7},
\begin{multline*}
(\sigma_{AB}-\sigma_A)\hu(\Gamma^A\cap U) - \sigma_B\hu(\Gamma^B\cap U) -\sigma_S\hu(S^V\cap U) \\
\leq -c_2 |V\cap U|^\frac12 + c_1\hu(V^{(1)}\cap\partial U),
\end{multline*}
where $c_1$, $c_2$ are strictly positive constants depending on the surface tension coefficients. By inserting this inequality into \eqref{proof:infiltration8} we obtain
\begin{align*}
c_2|V\cap U|^\frac12 \leq (c_1+\sigma_B)\hu(V^{(1)}\cap \partial U) + \sigma_{BS}\hu(S^V\cap U) + \Lambda|V\cap U|.
\end{align*}
Now observe that, by the geometry of the set $V$, we have $\hu(S^V\cap U)\leq \hu(V^{(1)}\cap \q_r'(z_0))$; hence from the previous inequality and \eqref{proof:infiltration6b} it follows that
\begin{align*}
c_2|V\cap U|^\frac12 
\leq (c_1+\sigma_B+\sigma_{BS})\e_1 r + \Lambda|V\cap U|.
\end{align*}
Finally, observe that by using the uniform bound \eqref{eq:bound} and the vertical geometry of $V$ we have $|V\cap U|\leq \overline{M}\hu(V^{(1)}\cap \q_r'(z_0))\leq \e_1\overline{M}r$, so that by inserting this estimate in the previous inequality we obtain
\begin{equation*}
c_2|V\cap U|^\frac12 \leq  (c_1+\sigma_B+\sigma_{BS})\e_1 r + \e_1\Lambda\overline{M}r,
\end{equation*}
that is, $|V\cap U| \leq  C\e_1^2 r^2$ for some constant $C>0$ depending on $\Lambda$, $M$ and on the surface tension coefficients (recall that the bound $\overline{M}$ in \eqref{eq:bound} depends only on these quantities). Eventually
\begin{align*}
|V\cap \cil^-_{\frac34 r}(z_0)|
&\leq |V\cap U| + |V\cap\q_r(z_0)| \leq C\e_1^2r^2 + \e_1 r^2 \leq \tilde{\e}_1\Bigl(\frac34r\Bigr)^2,
\end{align*}
provided that we choose $\e_1$ small enough. This completes the proof of \eqref{proof:infiltration5}.
\end{proof}

We also have a dual statement for the region occupied by the film. The proof follows by the same argument used in the proof of Proposition~\ref{prop:infiltrationV}. However, since we can obtain this result as a consequence of the stronger property proved in Proposition~\ref{prop:innerball}, we omit the proof.

\begin{proposition}[Infiltration for $A\cup B$] \label{prop:infiltrationAB}
Let $u\in\mathcal{A}_{\Lambda,M}$. There exists $\e_2>0$, depending on $\Lambda$, $M$, and on the surface tension coefficients, such that if for some $z_0\in\R^2$ and $r\in(0,1)$ such that $\q_r(z_0)\cap S=\emptyset$ we have
\begin{equation} \label{eq:infiltration_1}
|\Omega_{h_u}^\#\cap \q_r(z_0)| < \e_2 r^2,
\end{equation}
then
\begin{equation} \label{eq:infiltration_2}
|\Omega_{h_u}^\#\cap \q_{\frac{r}{2}}(z_0)| = 0.
\end{equation}
\end{proposition}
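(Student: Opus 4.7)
The plan is to mimic the two-step argument of Proposition~\ref{prop:infiltrationV}, dualizing the geometry: instead of a downward semi-infinite strip in which the void $V_u$ can be filled by $A_u$ or $B_u$, I use an upward semi-infinite strip in which the film $A_u \cup B_u$ can be replaced by $V_u$. The key admissibility observation is that this last operation corresponds to truncating $h_u$ pointwise by a constant on a horizontal interval, so the new profile $\min(h_u,c) \in \ap(Q_L)$ and the resulting competitor lies in $\ac$; the hypothesis $\q_r(z_0) \cap S = \emptyset$ guarantees that the strip sits entirely above the substrate, so that no substrate interface enters the variation.

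For the strip step, set $\cil^+ \coloneqq (-\tfrac12,\tfrac12)\times(-\tfrac12,+\infty)$ and $\cil^+_r(z_0) \coloneqq z_0 + r\cil^+$. I would show that there exists $\tilde\e_2 > 0$, depending on $\Lambda$, $M$ and the coefficients, such that $|\Omega_{h_u}^\# \cap \cil^+_r(z_0)| < \tilde\e_2 r^2$ implies $|\Omega_{h_u}^\# \cap \cil^+_{3r/4}(z_0)| = 0$. With $m(s) \coloneqq |\Omega_{h_u}^\# \cap \cil^+_s(z_0)|$, one has for a.e.\ $s\in(0,r)$ the identities $m'(s) = \tfrac12 \hu(\Omega_{h_u}^{(1)} \cap \partial\cil^+_s(z_0))$ and $\hu(J_u\cap\partial\cil^+_s(z_0))=0$. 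The competitor $u_s$, defined by replacing $\Omega_{h_u} \cap \cil^+_s(z_0)$ with $V$, is admissible by the observation above; quasi-minimality \eqref{eq:quasimin} gives
\begin{equation*}
\sigma_A \hu(\Gamma_u^A \cap \cil^+_s(z_0)) + \sigma_B \hu(\Gamma_u^B \cap \cil^+_s(z_0)) + \sigma_{AB} \hu(\Gamma_u^{AB} \cap \cil^+_s(z_0)) \leq 2\max\{\sigma_A,\sigma_B\}\, m'(s) + \Lambda m(s),
\end{equation*}
since the only new interfaces created on $\partial\cil^+_s(z_0)$ are $V$--$A$ and $V$--$B$ ones, of total length $2m'(s)$. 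Combining with the two-dimensional isoperimetric inequality applied to $\Omega_{h_u}\cap\cil^+_s(z_0)$, whose perimeter is bounded by $\hu(\Gamma_u^A\cap\cil^+_s(z_0)) + \hu(\Gamma_u^B\cap\cil^+_s(z_0)) + 2m'(s)$, and absorbing the volume term via $\Lambda m(s) \leq \Lambda \sqrt{\tilde\e_2}\, r \, m(s)^{1/2}$, one obtains $m(s)^{1/2} \leq C m'(s)$ for a.e.\ $s$, from which $m(3r/4) = 0$ follows by the very same integration argument used at the end of Step~1 of Proposition~\ref{prop:infiltrationV}.

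For the square-to-strip step, denote by $\q''_r(z_0) \coloneqq z_0 + (-\tfrac r2,\tfrac r2)\times\{\tfrac r2\}$ the top side of $\q_r(z_0)$. Since $\Omega_{h_u}$ is a subgraph, every density-one point of $\Omega_{h_u}^{(1)}$ on $\q''_r(z_0)$ sits atop a vertical segment of length $r$ contained in $\Omega_{h_u} \cap \q_r(z_0)$, hence $\hu(\Omega_{h_u}^{(1)} \cap \q''_r(z_0)) \leq \e_2 r$. For $\e_2$ small I can choose $\rho \in (\tfrac{3r}{4},r)$ such that the two corner points $z_0 + (\pm \tfrac\rho2, \tfrac r2)$ are not density-one for $\Omega_{h_u}$, since the $\leb^1$-measure of bad $\rho$'s is at most $4\e_2 r < r/4$. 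Setting $U \coloneqq \cil^+_\rho(z_0) \setminus \overline{\q_r(z_0)}$, the subgraph property together with the choice of $\rho$ forces the vertical sides of $U$ to lie in $V_u^{(1)}$, so the competitor $\tilde u$ obtained by filling $\Omega_{h_u}\cap U$ with $V$ only creates new interfaces on $\q''_r(z_0)$, contributing at most $(\sigma_A+\sigma_B)\e_2 r$ to the surface energy. Quasi-minimality, the isoperimetric inequality and the a priori bound $|\Omega_{h_u}\cap U| \leq \overline M\, \hu(\Omega_{h_u}^{(1)} \cap \q''_r(z_0)) \leq \overline M \e_2 r$ coming from \eqref{eq:bound} then give $|\Omega_{h_u} \cap U| \leq C\e_2^2 r^2$, and hence
\begin{equation*}
|\Omega_{h_u}^\# \cap \cil^+_{3r/4}(z_0)| \leq |\Omega_{h_u}\cap U| + |\Omega_{h_u}\cap\q_r(z_0)| \leq C\e_2^2 r^2 + \e_2 r^2 < \tilde\e_2 (3r/4)^2
\end{equation*}
for $\e_2$ sufficiently small. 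Applying Step~1 at radius $3r/4$ then yields $|\Omega_{h_u}^\# \cap \cil^+_{9r/16}(z_0)| = 0$, and the conclusion follows because $\q_{r/2}(z_0) \subset \cil^+_{9r/16}(z_0)$.

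The only substantive obstacle, just as in Proposition~\ref{prop:infiltrationV}, is the graph constraint: a direct ``filling'' of $A_u \cup B_u$ with $V_u$ inside the square $\q_r(z_0)$ would typically produce a region with a hole that is not a subgraph, so a generic elimination-type argument does not apply. The upward strip geometry, together with the hypothesis $\q_r(z_0) \cap S = \emptyset$, circumvents the issue by reducing the modification to a pointwise truncation $h_u \wedge c$, which is automatically in $\ap(Q_L)$.
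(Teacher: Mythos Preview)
Your proof is correct and follows exactly the approach the paper itself indicates: the paper states that ``the proof follows by the same argument used in the proof of Proposition~\ref{prop:infiltrationV}'' (dualized to upward strips, replacing $A_u\cup B_u$ by $V_u$ via a pointwise truncation of $h_u$), but omits the details since the statement can also be deduced from the interior ball condition in Proposition~\ref{prop:innerball}. Your write-up faithfully carries out the first route, including the two-step strip/square structure and the use of the a~priori height bound \eqref{eq:bound} in the square-to-strip reduction.
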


\begin{remark}
It is worth to notice that, in the proof of Proposition~\ref{prop:infiltrationV} (and also of Proposition~\ref{prop:infiltrationAB}), the constraint of being subgraphs prevents us to construct competitors by means of local variations in a square $\q_r(z_0)$, but imposes to consider the full vertical region below or above the square.
Notice also that, if a single set $\Omega_h$ is a quasi-minimizer of the perimeter in the class of subgraphs, a standard argument \cite[Theorem~14.8]{G} shows that it is actually a quasi-minimizer among \emph{all} possible competitors of finite perimeter, without the constraint (we will exploit this fact in the proof of Proposition~\ref{prop:C1alpha_graph}). However, in our case we have a partition of the subgraph into two sets $A$, $B$ of finite perimeter, and this argument fails due to the presence of different types of interfaces between the phases.
\end{remark}

We continue by proving that a quasi-minimizer satisfies an interior ball condition. The proof of this result follows a strategy devised in \cite{ChaLar03} (see also \cite{FonFusLeoMor07,FusMor12}) and adapted to our setting. For the reader's convenience, we report here the details of the main nontrivial technical changes that have to be made. Since we are in dimension 2, the function $h_u^-$ (see \eqref{h+-}) is a lower semicontinuous representative of $h_u$; in the following it will be convenient to identify $h_u$ with $h_u^-$, so that in particular the subgraph $\Omega_{h_u}^\#$ is an open set. From now on, we work under this convention.

\begin{proposition}[Interior ball] \label{prop:innerball}
Let $u\in\mathcal{A}_{\Lambda,M}$ and let $\rho_0<\frac{\min\{\sigma_A,\sigma_B\}}{\Lambda}$. Then for every $\bar{z}\in{\Gamma^\#_{h_u}}$ there exists an open ball $B_{\rho_0}(z_0)$ such that
\begin{equation} \label{eq:innerball}
B_{\rho_0}(z_0)\subset\bigl\{(x,y)\in\R^2 \,:\, y<h_u(x)\bigr\}, \qquad \partial B_{\rho_0}(z_0) \cap {\Gamma^\#_{h_u}}=\{\bar{z}\}.
\end{equation}
\end{proposition}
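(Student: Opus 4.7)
The plan is to argue by contradiction, adapting to our two-phase setting the rolling-ball strategy of \cite{ChaLar03, FonFusLeoMor07}. Fix $\bar z \in \Gamma^\#_{h_u}$ and set
\[
\rho^*(\bar z)\coloneqq\sup\bigl\{\rho>0\,:\,\exists\,z_0\in\R^2 \text{ with } B_\rho(z_0)\subset\Omega^\#_{h_u} \text{ and } \bar z\in\overline{B_\rho(z_0)}\bigr\}.
\]
I aim to show $\rho^*(\bar z)\geq\rho_0$; assuming $\rho^*(\bar z)<\rho_0$ will be contradicted by an explicit competitor. Preliminarily, I would check that $\rho^*(\bar z)>0$ (ruling out inward cusps via the infiltration property of Proposition~\ref{prop:infiltrationAB}), that the supremum is attained by some $B^*=B_{\rho^*}(z_0)\subset\Omega^\#_{h_u}$ with $\bar z\in\partial B^*$, and that $\overline{B^*}\cap\Gamma^\#_{h_u}$ contains at least one further contact point $z_1\neq\bar z$: otherwise, a translation of $z_0$ along the inward normal to $\partial B^*$ at $\bar z$, combined with a slight enlargement of the radius, would yield a strictly larger inner ball still tangent to the graph near $\bar z$, contradicting maximality.

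Next, I would build an admissible competitor $v \in \ac$ by \emph{filling with void} the cap $C$ enclosed between the arc $\gamma\subset\partial B^*$ joining $\bar z$ to $z_1$ on the side of the graph and the corresponding portion of $\Gamma^\#_{h_u}$. Since $C\subset\Omega^\#_{h_u}\setminus B^*$, setting $v\coloneqq 0$ on $C$ and $v\coloneqq u$ elsewhere produces a configuration whose new profile $h_v$ is still a valid BV subgraph function (if $\gamma$ is not globally a graph, I replace it by its vertical envelope, which introduces only lower order corrections). Writing $\ell_g\coloneqq \mathcal H^1(\Gamma^\#_{h_u}\cap\partial C)$ and $\ell_a\coloneqq\mathcal H^1(\gamma)$, the passage $u\mapsto v$ eliminates the graph segment $\partial C\cap\Gamma^\#_{h_u}$ (with energy at least $\min\{\sigma_A,\sigma_B\}\,\ell_g$) and every internal $\Gamma^{AB}_u$ interface contained in $C$ (a nonnegative contribution), while adding $\gamma$ as new $\Gamma^A_v\cup\Gamma^B_v$ with energy at most $\max\{\sigma_A,\sigma_B\}\,\ell_a$, plus at most one isolated $A$--$B$ junction on $\gamma$ whose cost is controlled via the strict triangle inequalities \eqref{ass:stricttriangle}. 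Combining these estimates with the quasi-minimality inequality $\g(u)\leq\g(v)+\Lambda|C|$ and a careful bookkeeping of how the phases match across $\gamma$ and across the graph portion, a case analysis based on \eqref{ass:stricttriangle} reduces the comparison to the clean form
\[
\min\{\sigma_A,\sigma_B\}\,(\ell_g-\ell_a)\;\leq\;\Lambda\,|C|.
\]

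Finally, I close the argument by a geometric comparison: parametrizing $\Gamma^\#_{h_u}\cap\partial C$ in polar coordinates $r(\theta)$ around $z_0$ (well defined because the graph lies in $\R^2\setminus B^*$), one has $r(\theta)\geq\rho^*$, $\ell_a=\rho^*\,\Delta\theta$, $\ell_g\geq\int r(\theta)\,d\theta$ and $|C|=\tfrac12\int(r(\theta)^2-\rho^{*\,2})\,d\theta$. Hence $\ell_g-\ell_a\geq\int(r(\theta)-\rho^*)\,d\theta$ and $|C|\leq\rho^*\int(r(\theta)-\rho^*)\,d\theta + o(\ell_g-\ell_a)$ as the cap becomes thin (which can be arranged by localizing $C$ near the tangent points). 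Substituting yields $\min\{\sigma_A,\sigma_B\}\leq\Lambda\rho^*$, contradicting $\rho^*<\rho_0<\min\{\sigma_A,\sigma_B\}/\Lambda$. The main technical obstacle is the careful phase bookkeeping in the energy comparison of the second step — particularly when both $A$ and $B$ touch the arc $\gamma$, or when part of $\partial C$ approaches the substrate — where the strict triangle inequalities \eqref{ass:stricttriangle} are essential to absorb the cost of the newly created interfaces. A secondary subtlety is verifying that the supremum $\rho^*$ is actually attained and that the extremal ball produces a cap small enough for the geometric estimate above to give the sharp constant.
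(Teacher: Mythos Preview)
Your overall strategy matches the paper's: argue by contradiction, build a competitor by removing the cap $C$ between the graph and a circular arc $\gamma$, use quasi-minimality, and close with a geometric inequality comparing $\ell_g-\ell_a$ to $|C|$. Two points, however, are not settled by your sketch.

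\textbf{Energy bookkeeping.} Your crude bounds (removed energy $\geq\min\{\sigma_A,\sigma_B\}\ell_g$, added energy $\leq\max\{\sigma_A,\sigma_B\}\ell_a$) do \emph{not} yield the clean inequality $\min\{\sigma_A,\sigma_B\}(\ell_g-\ell_a)\leq\Lambda|C|$; and the phrase ``at most one isolated $A$--$B$ junction on $\gamma$'' is not correct in general. The paper tracks the $A$- and $B$-portions of both the graph arc ($L_A,L_B$) and the circular arc ($\ell_A,\ell_B$) separately, obtaining $\sigma_A(L_A-\ell_A)+\sigma_B(L_B-\ell_B)+\sigma_{AB}\hu(\Gamma^{AB}\cap\overline C)\leq\Lambda|C|$. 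When, say, $\ell_A>L_A$, the reduction to the clean form uses not only the strict triangle inequality $\sigma_A<\sigma_B+\sigma_{AB}$ but also the nontrivial structural fact $L_A-\ell_A+\hu(\Gamma^{AB}\cap\overline C)\geq0$ (essentially because $\partial A$ cannot exit $C$ through the graph more than through the arc plus the internal interface).

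\textbf{The geometric estimate.} This is a genuine gap. In polar coordinates about $z_0$ one has $|C|=\tfrac12\int(r^2-\rho^{*2})\,d\theta\geq\rho^*\int(r-\rho^*)\,d\theta$, so the bound you write, $|C|\leq\rho^*\int(r-\rho^*)\,d\theta+o(\ell_g-\ell_a)$, goes the wrong way unless the cap is thin. But the cap $C$ is fixed by the two contact points $\bar z,z_1$; there is no free smallness parameter, and the ``localization near the tangent points'' you invoke cannot be carried out while keeping both endpoints on $\Gamma_h$. The paper avoids this entirely by working in Cartesian (graph) coordinates and exploiting convexity of $p\mapsto\sqrt{1+p^2}$: writing $\gamma$ as the graph of $g$ on $[x_a,x_b]$, one has
\[
\ell_g-\ell_a=\int_{x_a}^{x_b}\Bigl(\sqrt{1+h'^2}-\sqrt{1+g'^2}\Bigr)\,dx\geq\int_{x_a}^{x_b}\frac{g'}{\sqrt{1+g'^2}}(h'-g')\,dx=-\int_{x_a}^{x_b}\Bigl(\frac{g'}{\sqrt{1+g'^2}}\Bigr)'(h-g)\,dx=\frac{1}{\rho}|C|,
\]
the last equality because the arc has constant curvature $1/\rho$. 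This \emph{exact} inequality $\ell_g-\ell_a\geq|C|/\rho$ is what forces $\rho\geq\min\{\sigma_A,\sigma_B\}/\Lambda$ with the sharp constant, without any thinness hypothesis.
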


\begin{proof}
We divide the proof into two steps. To simplify the notation we drop the subscripts on the various objects depending on $u$, which is fixed along this proof. We also denote by
\begin{equation} \label{eq:subgraph}
\Omega^-_h \coloneqq \bigl\{(x,y)\in\R^2 \,:\, y<h(x)\bigr\}
\end{equation}
where $h=h_u$ is the profile associated with the configuration $u$. Recall that, by the convention of identifying $h$ with its lower semicontinuous representative $h^-$, the set $\Omega_h^-$ is open.

\medskip\noindent\textit{Step 1.} We claim that for every ball $B_{\rho_0}(z_0)\subset\Omega^-_h$, where $\rho_0$ is as in the statement, the set $\partial B_{\rho_0}(z_0) \cap \Gamma^\#_h$ consists of at most one point.

Suppose on the contrary that there exists a ball $B_{\rho}(z_0)\subset\Omega_h^-$, $z_0=(x_0,y_0)$, such that $\partial B_{\rho}(z_0) \cap \Gamma_h^\#$ contains at least two points $a=(x_a,y_a)$, $b=(x_b,y_b)$, with $x_a\leq x_b$, $h^-(x_a)\leq y_a\leq h^+(x_a)$, $h^-(x_b)\leq y_b\leq h^+(x_b)$.  We will prove that this is not possible if $\rho\leq\rho_0$. By periodicity, we assume without loss of generality that $B_{\rho}(z_0)\subset(0,L)\times\R$.

\begin{figure}
	\centering
	\includegraphics[scale=0.3]{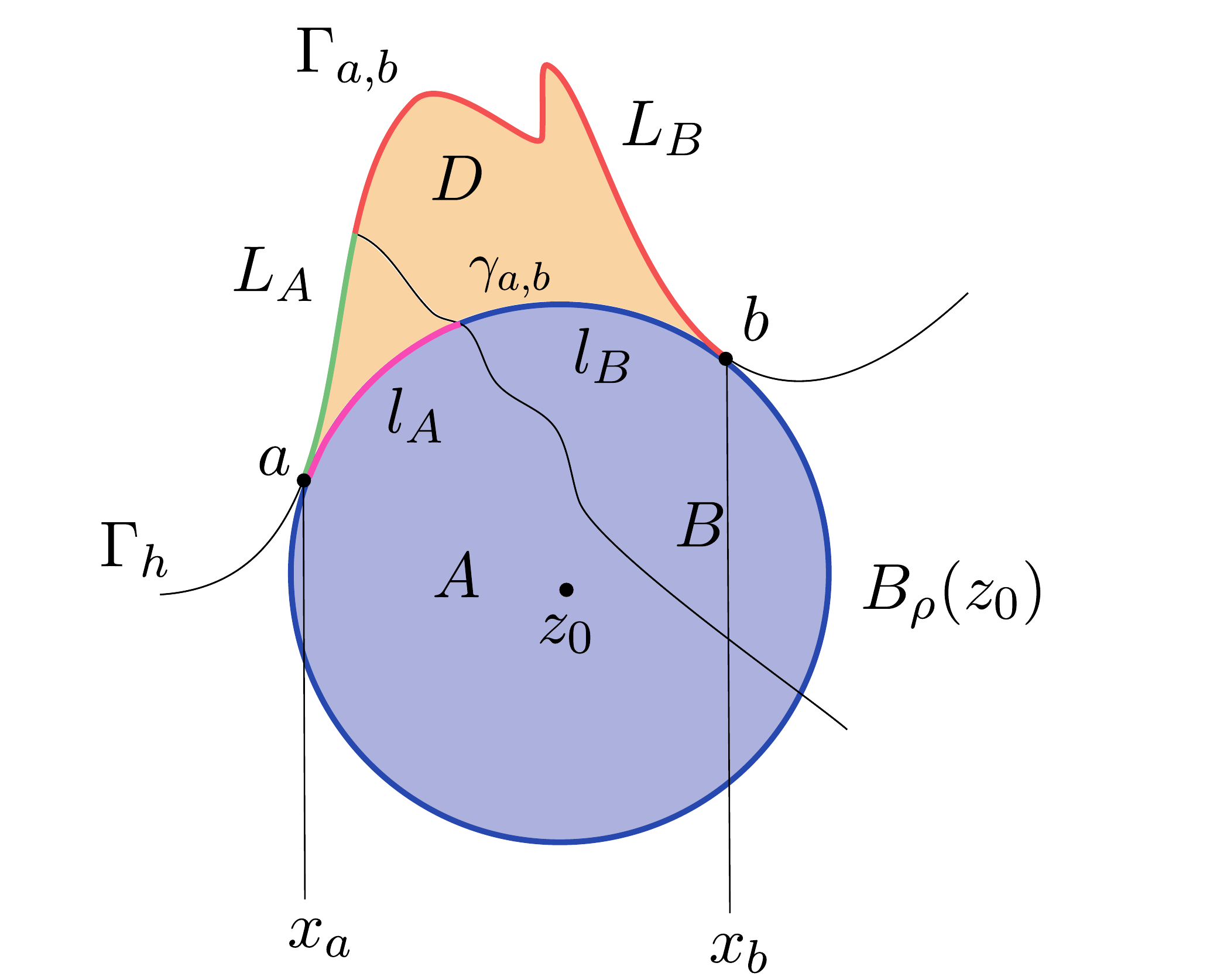}
	\caption{The construction in the proof of Proposition~\ref{prop:innerball}.}
	\label{fig:ball}
\end{figure}

We define (see Figure~\ref{fig:ball}) $\Gamma_{a,b}$ to be the arc on $\Gamma_h$ connecting $a$ with $b$, and $\gamma_{a,b}$ to be the arc on $\partial B_\rho(z_0)\cap\{y\geq y_0\}$ connecting $a$ with $b$. Notice that by construction $\Gamma_{a,b}$ lies above $\gamma_{a,b}$. We also let $D$ to be the region enclosed by $\Gamma_{a,b}$ and $\gamma_{a,b}$ (i.e.\ the bounded component of $\R^2\setminus(\Gamma_{a,b}\cup\gamma_{a,b})$). Also let
$$
L_A\coloneqq\hu(\Gamma_{a,b}\cap\Gamma^A), \quad L_B\coloneqq\hu(\Gamma_{a,b}\cap\Gamma^B), \quad L\coloneqq L_A+L_B=\hu(\Gamma_{a,b}),
$$
$$
\ell_A\coloneqq \hu\bigl( \gamma_{a,b}\cap \partial^*(A\cap B_\rho(z_0)) \bigr), \quad \ell_B\coloneqq \hu(\gamma_{a,b})-\ell_A, \quad \ell\coloneqq\ell_A+\ell_B=\hu(\gamma_{a,b}).
$$

We construct a competitor by removing the set $D$ from the subgraph $\Omega_h$: more precisely, we define $\tilde{u}\coloneqq u\chi_{D^c}$. In this way $\tilde{u}$ is an admissible configuration, $A_{\tilde{u}}=A\setminus D$, $B_{\tilde{u}}=B\setminus D$, and the profile $h_{\tilde{u}}$ coincides with $h$ outside $[x_a,x_b]$, and its graph on $[x_a,x_b]$ is given by $\gamma_{a,b}$. Then, by using the quasi-minimality of $u$ we obtain
\begin{align*}
\g(u)
& \leq \g(\tilde{u}) + \Lambda |D| 
 = \g(u) - \sigma_AL_A - \sigma_BL_B - \sigma_{AB}\hu(\Gamma^{AB}\cap\overline{D}) + \sigma_A\ell_A + \sigma_B\ell_B + \Lambda|D|,
\end{align*}
hence
\begin{equation} \label{proof:innerball1}
\sigma_A (L_A-\ell_A) + \sigma_B(L_B-\ell_B) +\sigma_{AB}\hu(\Gamma^{AB}\cap\overline{D}) \leq \Lambda|D|.
\end{equation}
To continue, we distinguish between two cases.

\smallskip\noindent\textbf{Case 1:} assume that $\ell_A\leq L_A$, $\ell_B\leq L_B$. In this case we obtain from \eqref{proof:innerball1}
\begin{equation} \label{proof:innerball2}
\min\{\sigma_A,\sigma_B\}(L-\ell) \leq \Lambda|D|.
\end{equation}
We estimate the difference $L-\ell$ as in \cite[Lemma~6.6]{FusMor12}; we reproduce the argument here for the reader's convenience. Assume first that $h$ is Lipschitz continuous. Since $\Gamma_{a,b}$ and $\gamma_{a,b}$ are the graphs on $[x_a,x_b]$ of $h$ and $h_{\tilde{u}}$, respectively, and $h(x_a)=h_{\tilde{u}}(x_a)$, $h(x_b)=h_{\tilde{u}}(x_b)$, we have
\begin{align*}
L-\ell
& = \hu(\Gamma_{a,b})-\hu(\gamma_{a,b})
= \int_{x_a}^{x_b} \Bigl( \sqrt{1+(h'(x))^2} - \sqrt{1+(h_{\tilde{u}}'(x))^2}\Bigr)\de x \\
& \geq - \int_{x_a}^{x_b} \biggl( \frac{h_{\tilde{u}}'(x)}{\sqrt{1+(h_{\tilde{u}}'(x))^2}} \biggr)' \bigl( h(x)-h_{\tilde{u}}(x)\bigr) \de x
= \frac{1}{\rho}|D|.
\end{align*}
By combining this inequality with \eqref{proof:innerball2}, we see that necessarily $\rho\geq\frac{\min\{\sigma_A,\sigma_B\}}{\Lambda}$.
Therefore, any ball $B_{\rho_0}(z_0)\subset\Omega_h^-$ can touch the graph $\Gamma_h^\#$ at most once.

If $h$ is not Lipschitz, then we can approximate $h$ by a sequence of Lipschitz functions $g_k$ as in \cite[Lemma~6.2]{FusMor12}, write the previous inequality for $g_k$ and obtain the same conclusion by passing to the limit.

\smallskip\noindent\textbf{Case 2:} assume that $\ell_A>L_A$, $\ell_B\leq L_B$ (the other case, $\ell_A\leq L_A$ and $\ell_B>L_B$, is completely analogous). In this case, thanks to the triangle inequality $\sigma_A<\sigma_B+\sigma_{AB}$ (see \eqref{ass:stricttriangle}), we find by \eqref{proof:innerball1}
\begin{equation*}
(\sigma_B+\sigma_{AB})(L_A-\ell_A) +\sigma_B(L_B-\ell_B) + \sigma_{AB}\hu(\Gamma^{AB}\cap\overline{D}) \leq \Lambda |D|,
\end{equation*}
and in turn
\begin{equation*}
\sigma_B(L-\ell) + \sigma_{AB}(L_A-\ell_A+\hu(\Gamma^{AB}\cap\overline{D})) \leq \Lambda |D|.
\end{equation*}
One can check that
\begin{equation}\label{eq:ineq_circle}
L_A-\ell_A+\hu(\Gamma^{AB}\cap\overline{D})\geq0
\end{equation}
(this inequality follows essentially by \cite[Ex.~15.14]{Mag}).
Therefore, using \eqref{eq:ineq_circle} it follows that $\sigma_B(L-\ell)\leq \Lambda |D|$ and in particular \eqref{proof:innerball2} holds. We can therefore repeat the same argument as in the previous case.

\medskip\noindent\textit{Step 2.} By the result in Step~1, the existence of an interior ball at every point of $\Gamma_h^\#$ can be proved by following the same lines as in \cite[Lemma~2]{ChaLar03} or \cite[Proposition~3.3, Step~2]{FonFusLeoMor07}.
\end{proof}

As a consequence of the interior ball condition proved in Proposition~\ref{prop:innerball}, we obtain the Lipschitz regularity of the free profile $\Gamma_{h_u}^\#$ of a quasi-minimizer $u$ outside a finite set. The proof is an adaptation of the strategy of \cite[Lemma~3]{ChaLar03}.

\begin{proposition}[Lipschitz regularity] \label{prop:lipschitz}
Let $u\in\mathcal{A}_{\Lambda,M}$. There exists a finite set $\Sigma\subset Q_L$, with $J_{h_u}\subset\Sigma$, such that $h_u$ is locally Lipschitz in $Q_L\setminus\Sigma$ and has left and right derivatives at every point of $Q_L\setminus\Sigma$, that are respectively left and right continuous.
\end{proposition}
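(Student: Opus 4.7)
Fix $\rho_0<\min\{\sigma_A,\sigma_B\}/\Lambda$ as in Proposition~\ref{prop:innerball}. For any graph point $\bar{z}=(x_0,y_0)\in\Gamma_{h_u}^\#$, each admissible interior ball $B_{\rho_0}(z_0)\subset\Omega_{h_u}^\#$ tangent to the graph only at $\bar{z}$ produces an upward outward normal $\bar{z}-z_0=\rho_0(\cos\phi,\sin\phi)$ with $\phi\in[0,\pi]$; we call the ball \emph{horizontal} when $\phi\in\{0,\pi\}$. Declare $x_0\in\Sigma$ if some graph point over $x_0$ admits only horizontal interior balls. Then $J_{h_u}\subset\Sigma$: at a jump point, the vertical segment in $\Gamma_{h_u}^\#$ connecting $(x_0,h_u^-(x_0))$ to $(x_0,h_u^+(x_0))$ prevents any tangent interior ball at the upper endpoint from being tilted, forcing $\phi\in\{0,\pi\}$.

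The crucial step is the finiteness of $\Sigma$. At every $x_0\in\Sigma$ with associated horizontal ball of center $(x_0-\rho_0,y_0)$, the inclusion $B_{\rho_0}(x_0-\rho_0,y_0)\subset\Omega_{h_u}^\#$ gives $h_u^-(x_0-\rho_0)\geq y_0+\rho_0\geq h_u^-(x_0)+\rho_0$; a symmetric bound holds when the horizontal ball lies to the right of $x_0$. Thus each singular point is responsible for an oscillation of $h_u$ of size at least $\rho_0$ on a one-sided interval of length $\rho_0$ adjacent to $x_0$. Since the quasi-minimality of $u$ together with \eqref{eq:bound} yields a uniform bound $|Dh_u|(Q_L)\leq C(M,\Lambda)$, a packing argument summing these one-sided oscillations gives $\#\Sigma\leq C$ for a constant depending on $M$, $\Lambda$, and the surface tension coefficients.

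For Lipschitz regularity outside $\Sigma$, fix $x_0\in Q_L\setminus\Sigma$. At the unique graph point $(x_0,h_u(x_0))$ we select an interior ball with angle $\phi_0\in(0,\pi)$; applying Proposition~\ref{prop:innerball} at nearby graph points and using the finiteness of $\Sigma$, we obtain a neighborhood $U\subset Q_L\setminus\Sigma$ of $x_0$ and $\delta>0$ such that an interior ball with angle in $[\delta,\pi-\delta]$ is available at every graph point above $U$. The upper arc of any such ball is the graph of a smooth function with slope bounded by $\cot\delta$, and since $h_u$ lies above this arc and touches it at the contact point, $h_u$ is Lipschitz on $U$ with constant at most $\cot\delta$. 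Finally, the existence of one-sided derivatives at each $x_0\in Q_L\setminus\Sigma$, and their one-sided continuity, follow from a monotonicity argument exploiting the non-overlap of interior balls at nearby graph points: the one-sided derivatives are $h_u'(x_0\pm)=-\cot\phi^\pm(x_0)$, where $\phi^\pm(x_0)$ denote the extremal admissible angles at $(x_0,h_u(x_0))$, and one-sided continuity of $\phi^\pm$ is derived via Hausdorff convergence of the associated interior balls as $x\to x_0^\pm$.

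The main obstacle will be the finiteness of $\Sigma$: while the one-sided oscillation estimate per singular point is clean, the packing argument must carefully handle both leftward and rightward horizontal balls on an equal footing and control possible clustering of singular points within short intervals, which is ultimately absorbed by the $BV$-bound on $h_u$.
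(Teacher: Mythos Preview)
Your approach mirrors the paper's (both follow Chambolle--Larsen): define the singular set via horizontal interior balls, exploit the arc structure of the set of admissible directions, and deduce local Lipschitz continuity from a uniform angle bound. However, your definition of $\Sigma$ is too small, and this breaks the Lipschitz conclusion.

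You put $x_0\in\Sigma$ only when some graph point over $x_0$ admits \emph{exclusively} horizontal balls. The paper instead puts $x_0\in\Sigma$ whenever some graph point $z$ over $x_0$ admits \emph{at least one} horizontal ball (i.e., $e_1\in n(z)$ or $-e_1\in n(z)$ in the paper's notation). These differ at continuity points where $n(z_0)$ is a nondegenerate arc having $e_1$ or $-e_1$ as an endpoint. Such a point lies outside your $\Sigma$ (a non-horizontal ball is available), yet $h_u$ is not Lipschitz there: if the ball centered at $(x_0+\rho_0,h_u(x_0))$ is admissible, then for $x\in(x_0,x_0+2\rho_0)$ one has $h_u(x)>h_u(x_0)+\sqrt{\rho_0^2-(x-x_0-\rho_0)^2}$, whence $(h_u(x)-h_u(x_0))/(x-x_0)\to+\infty$ as $x\to x_0^+$. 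Remark~\ref{rmk:singularset} confirms that such points (continuity points with an infinite one-sided derivative) do occur in $\Sigma$. Your step ``an interior ball with angle in $[\delta,\pi-\delta]$ is available at every graph point above $U$'' therefore cannot be established: the existence of one non-horizontal ball at $z_0$ does not exclude a horizontal ball at $z_0$ or at nearby points, and it is precisely the \emph{absence} of horizontal balls that bounds the slope.

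The fix is to enlarge $\Sigma$ to the paper's set. Your oscillation/packing argument for finiteness applies verbatim to this larger set (each horizontal ball still forces $h_u^-(x_0\mp\rho_0)\geq h_u^-(x_0)+\rho_0$), and once $\pm e_1\notin n(z)$ for every $z$ over a neighborhood, the closed arcs $n(z)$ stay uniformly bounded away from $\{\pm e_1\}$, which delivers the uniform $\delta$ you need.
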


\begin{proof}
We denote by $h\coloneqq h_u$ the admissible profile associated with the configuration $u$. For $z\in\Gamma_{h}^\#$, we let
$$
n(z)\coloneqq \bigl\{ \nu\in\s^1 \,:\, B_{\rho_0}(z+\rho_0\nu)\subset\Omega_h^-\bigr\}
$$
where $\rho_0>0$ is given by Proposition~\ref{prop:innerball} and $\Omega_h^-$ is defined in \eqref{eq:subgraph}. In view of Proposition~\ref{prop:innerball}, $n(z)\neq\emptyset$ for all $z\in\Gamma_h^\#$. Notice also that, if $\nu\in n(z)$ for some $z$, then necessarily $\nu\cdot e_2\leq0$ (or otherwise $\Gamma_h^\#$ would not be an extended graph). We define the singular set
\begin{equation} \label{eq:singular_set}
\Sigma \coloneqq \pi_x\big(\bigl\{ z\in\Gamma_h \,:\, e_1\in n(z)\text{ or }-e_1\in n(z)  \bigr\}\bigr) \subset Q_L,
\end{equation}
where $\pi_x$ denotes the projection on the $x$-axis, and by $\Sigma^\#$ its periodic extension.

Similarly to \cite[Lemma~3]{ChaLar03}, one can show that the set $n(z)$ is a closed  (possibly degenerate) arc on $\s^1$ with length strictly smaller than $\pi$. In turn, this allows to show that $\Sigma$ is a finite set with $J_h\subset\Sigma$, and that $h$ is locally Lipschitz in $\R\setminus\Sigma^\#$.
\end{proof}

\begin{remark} \label{rmk:singularset}
The points in the singular set $\Sigma$ identified in Proposition~\ref{prop:lipschitz} are of two possible kinds: they are either jump points of the function $h_u$, or continuity points of $h_u$ at which the left or the right derivative of $h_u$ is infinite. At the upper point $(x,h_u^+(x))$ of a jump $x\in J_{h_u}$, the graph has a vertical tangent. Notice also that the graph of $h_u$ does not contain cusp points as a consequence of the infiltration property and the inner ball condition (see Step~1 in the proof of Proposition~\ref{prop:lipschitz}).
\end{remark}

\begin{remark}\label{rem:interior_regularity}
Let $u\in\mathcal{A}_{\Lambda,M}$. Since the set $\Omega_h^\#=\{(x,y)\in\R^2 : 0<y<h(x)\}$ is open, $A$ is a quasi-minimizer of the perimeter in $\Omega_h^\#$ in the classical sense. Thus it is possible to apply standard regularity results (see \cite[Theorems~26.5 and 28.1]{Mag}) to obtain that $\Gamma^{AB}_u$ is a locally a $C^{1,\alpha}$-curve in $\Omega_h^\#$ for every $\alpha\in(0,1/2)$, and that it coincides with $\partial A\cap \partial B$ in $\Omega_h^\#$.
\end{remark}

Finally, we show that the graph as a better regularity around points of $\partial^*A\cup\partial^*B$. Notice that if $h_u(x_0)=0$, then $(x_0,h_u(x_0))\notin\partial^*A\cup\partial^*B$, or otherwise $V_u$ would have Lebesgue density zero at that point, which is not permitted by the infiltration property in Proposition~\ref{prop:infiltrationV}.

\begin{proposition}\label{prop:C1alpha_graph}
Let $u\in\mathcal{A}_{\Lambda,M}$. If $x_0\in Q_L\setminus\Sigma$ is such that $(x_0,h_u(x_0))\in\partial^*A\cup\partial^*B$, then $h_u$ is of class $C^{1,\alpha}$ in a neighbourhood of $x_0$, for every $\alpha\in(0,1/2)$.
\end{proposition}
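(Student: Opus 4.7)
We may assume without loss of generality that $z_0:=(x_0,h_u(x_0))\in\partial^*A$; the case $z_0\in\partial^*B$ is symmetric. Since $A\subset\Omega_{h_u}$ and $A$ has density $\tfrac12$ at $z_0$, so does $\Omega_{h_u}$, and the Lipschitz bound from Proposition~\ref{prop:lipschitz} then forces $h_u(x_0)>0$ (otherwise, for a Lipschitz graph vanishing at $x_0$, the density of $\Omega_{h_u}$ at $(x_0,0)$ is strictly less than $\tfrac12$, contradicting $z_0\in\partial^*A$). Consequently $B$ has Lebesgue density zero at $z_0$, and Proposition~\ref{prop:lipschitz} supplies $\rho_0>0$ and a constant $L>0$ such that $h_u$ is Lipschitz on $(x_0-\rho_0,x_0+\rho_0)$ with constant $L$.

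The core of the proof is an elimination property for $B$ near $z_0$: there exist $\bar r,\varepsilon>0$ (depending on $\Lambda,M,L$ and the surface tensions) such that for every $r\in(0,\bar r)$,
\[
|B_u\cap R_r|<\varepsilon r^2 \;\Longrightarrow\; |B_u\cap R_{r/2}|=0,
\]
where $R_r:=(x_0-r,x_0+r)\times(h_u(x_0)-(L+1)r,\,h_u(x_0)+(L+1)r)$. The Lipschitz bound ensures that the graph $\Gamma_{h_u}$ enters $R_r$ through the left edge and exits through the right edge, never touching top, bottom, or substrate. The competitor is tailored to respect the graph constraint: for $s\in(r/2,r)$, set $A_v:=A_u\cup(B_u\cap R_s)$, $B_v:=B_u\setminus R_s$, $V_v:=V_u$, $h_v:=h_u$, so that $A_v\cup B_v=\Omega_{h_u}$ and $v\in\ac$. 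The change in surface energy is computed explicitly, and combined with the quasi-minimality \eqref{eq:quasimin} yields
\[
\sigma_{AB}\hu(\Gamma^{AB}_u\cap R_s) \leq (\sigma_A-\sigma_B)\hu(\Gamma^B_u\cap R_s) + \sigma_{AB}\hu(B_u^{(1)}\cap\partial R_s) + 2\Lambda\,|B_u\cap R_s|.
\]
The strict triangle inequality $|\sigma_A-\sigma_B|<\sigma_{AB}$, together with the relative isoperimetric inequality in $R_s$ and a geometric bound that controls $\hu(\Gamma^B_u\cap R_s)$ in terms of $\hu(\Gamma^{AB}_u\cap R_s)$ (using the Lipschitz structure of the ceiling of any $B$-blob touching $\Gamma_{h_u}$), allows one to absorb the $\hu(\Gamma^B_u\cap R_s)$ term and extract a differential inequality $m(s)^{1/2}\leq C\,m'(s)$ for $m(s):=|B_u\cap R_s|$, valid as long as $m(r)<\varepsilon r^2$. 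Integrating on $(r/2,r)$ gives $m(r/2)=0$ once $\varepsilon$ is small enough.

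Since $B$ has density zero at $z_0$, the hypothesis of the elimination property is met for some small $r$, so $B_u\cap R_{r/2}=\emptyset$ and in $R_{r/2}$ only phases $A$ and $V$ are present, with $A\cap R_{r/2}=\Omega_{h_u}\cap R_{r/2}$. The quasi-minimality of $u$ then reduces to a quasi-minimality of $A$ for the perimeter in $R_{r/2}$ within the class of subgraphs, and the observation recalled in the remark after Proposition~\ref{prop:infiltrationAB} (see \cite[Theorem~14.8]{G}) upgrades this to $\Lambda'$-quasi-minimality of $A$ against arbitrary compactly supported perturbations. Classical regularity theory for quasi-minimizers of the perimeter (\cite[Theorems~26.5 and~28.1]{Mag}) implies that $\partial A\cap R_{r/2}$ is a $C^{1,\alpha}$-curve for every $\alpha\in(0,\tfrac12)$; since this curve coincides with the graph of $h_u$ restricted to $(x_0-r/2,x_0+r/2)$, we conclude $h_u\in C^{1,\alpha}$ near $x_0$. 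The delicate point is the elimination step: standard cluster-elimination arguments (\cite{Leo01}) rely on arbitrary local perturbations in a cube, which would violate the graph constraint here; the fill-with-$A$ competitor respects the constraint, but closing the differential inequality when $\sigma_A>\sigma_B$ requires careful use of both the strict triangle inequalities and the Lipschitz control on $\Gamma_{h_u}$.
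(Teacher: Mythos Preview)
Your overall architecture is right and matches the paper: eliminate the third phase near $z_0$ so that locally only two phases remain, then upgrade subgraph quasi-minimality to unconstrained quasi-minimality via the rearrangement trick (your citation of \cite[Theorem~14.8]{G} is exactly the device the paper uses explicitly with the $E^G$ construction in its Step~2). The conclusion via \cite[Theorems~26.5 and~28.1]{Mag} is also the same.

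The gap is in the elimination step. Your single competitor (fill $B$ with $A$) gives
\[
\sigma_{AB}\hu(\Gamma^{AB}_u\cap R_s)+(\sigma_B-\sigma_A)\hu(\Gamma^B_u\cap R_s)\le \sigma_{AB}\hu(B^{(1)}\cap\partial R_s)+2\Lambda m(s),
\]
and when $\sigma_A>\sigma_B$ you try to absorb the bad $\Gamma^B$ term via a geometric bound $\hu(\Gamma^B_u\cap R_s)\le C\,\hu(\Gamma^{AB}_u\cap R_s)+Cm'(s)$ coming from the Lipschitz ceiling. But any such bound carries a factor $\sqrt{1+L^2}$ (the graph over the $x$-projection of the $B$-ceiling has length at most $\sqrt{1+L^2}$ times that projection, while $\Gamma^{AB}$ only dominates the projection), so absorption would require $(\sigma_A-\sigma_B)\sqrt{1+L^2}<\sigma_{AB}$. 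The strict triangle inequality gives $\sigma_A-\sigma_B<\sigma_{AB}$, but $L$ is the local Lipschitz constant of $h_u$ near $x_0$, which is not controlled in terms of the surface tensions; for large $L$ the absorption fails and the differential inequality does not close.

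The paper resolves exactly this difficulty by splitting into two cases according to whether $\hu(\Gamma^B_u\cap R_s)\lessgtr\hu(\Gamma^{AB}_u\cap R_s)$ (in its notation, with $A$ and $B$ swapped). In the case analogous to your competitor it uses the case hypothesis $\hu(\Gamma^B_u)\le\hu(\Gamma^{AB}_u)$ directly to absorb. In the opposite case it uses a \emph{second} competitor: remove $B\cap R_s$ and vertically rearrange $A\cap R_s$ downward (the ``giustizzato'' $(\,\cdot\,)^G$), which changes the free profile but stays admissible because \eqref{eq:no_exit_above} guarantees the graph does not touch $\partial^\pm R_s$. The key estimate $\hu(\partial^*A_s\cap R_s)\le\hu(\partial^*A\cap R_s)+\hu(B^{(1)}\cap\partial^-R_s)$ (the analogue of \cite[Lemma~14.7]{G}) then yields a clean differential inequality with constants depending only on the surface tensions, not on $L$. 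You need this second competitor; the hand-waved geometric bound cannot replace it.
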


\begin{proof}
To simplify the notation we denote by $h\coloneqq h_u$ the admissible profile associated with the configuration $u$, and we remove the subscript $u$ from the sets of the corresponding partition.
Recalling \eqref{ass:stricttriangle}, we let
\begin{equation}\label{eq:strict_ineq_delta}
\delta\coloneqq \min\{\sigma_{AB}+\sigma_A - \sigma_B, \sigma_{AB},\sigma_{A}\}>0.
\end{equation}

\medskip\noindent\textit{Step 1.}
Fix $x_0$ as in the statement and let $z_0\coloneqq(x_0,h(x_0))$.
Thanks to Proposition~\ref{prop:lipschitz} we can find $r_0>0$ (depending on $x_0$) such that $h$ is Lipschitz continuous in $(x_0-r_0,x_0+r_0)$, with Lipschitz constant $\ell\in(0,\infty)$.
For $s>0$, set
$R_s\coloneqq z_0 + sR,$
where $R\coloneqq (-1,1)\times (-2\ell,2\ell).$
Then
\begin{equation}\label{eq:no_exit_above}
\Gamma_h \cap \partial^\pm R_s = \emptyset
\end{equation}
for all $s\in(0,r_0)$, where
$\partial^\pm R_s \coloneqq z_0 + (-s,s)\times\{\pm 2\ell s\}.$
Moreover, by possibly reducing the value of $r_0$, we can also assume that $R_s\cap S=\emptyset$.

We now prove an infiltration-type property, similar to Proposition~\ref{prop:infiltrationV}, for the two phases $A$, $B$ at the point $z_0$. Precisely, we claim that there exists $\varepsilon>0$ (depending on $z_0$) such that if
\begin{equation}\label{eq:inf_1}
| A\cap R_r | \leq \varepsilon r^2,
\end{equation}
for some $0<r<r_0$, then 
\begin{equation}\label{eq:inf_2}
| A\cap R_{r/2} |=0.
\end{equation}
The same property holds if the set $A$ is replaced by the set $B$.

For $s\in(0,r_0)$ set $m(s) \coloneqq |A\cap R_s|$. Then for $\leb^1$-a.e.\ $s\in(0,r_0)$ we have that
\begin{equation}\label{eq:mprime}
m'(s)=2\ell\,\hu(A^{(1)}\cap(\partial^+ R_s\cup\partial^-R_s)) +\hu(A^{(1)}\cap \partial R_s\setminus(\partial^+R_s\cup\partial^- R_s))
\end{equation}
and that
\begin{equation}\label{eq:no_interface_boundary}
\hu(\partial R_s\cap (\partial^*A\cup\partial^*B ))=0.
\end{equation}
We claim that there exist $C_1,C_2>0$ such that for $s\in(0,r_0)$ satisfying \eqref{eq:mprime} and \eqref{eq:no_interface_boundary}, the following differential inequality is true:
\begin{equation}\label{eq:diff_ineq_il}
C_1 m(s)^{\frac{1}{2}}\leq C_2 m'(s) + 3\Lambda m(s).
\end{equation}
Once \eqref{eq:diff_ineq_il} is established, \eqref{eq:inf_2} will follow by a standard argument by using \eqref{eq:inf_1} and choosing $\e$ sufficiently small, as in the last part of Step~1 in the proof of Proposition~\ref{prop:infiltrationV}.

We are thus left with proving \eqref{eq:diff_ineq_il}.
The idea is to construct a suitable competitor and to use the quasi-minimality inequality for $u$; we have to pay attention that the competitor satisfies the graph constraint. If
\begin{equation}\label{eq:assumption_il_1}
\hu(\Gamma^A\cap R_s) > \hu(\Gamma^{AB}\cap R_s) 
\end{equation}
then we set (see Figure~\ref{fig:competitor1})
\begin{equation}\label{eq:comeptitor_il_1}
A_s\coloneqq A\setminus R_s,\quad\quad\quad
B_s\coloneqq (B\setminus R_s) \cup \left( B \cap R_s \right)^G,
\end{equation}
where, for a measurable set $E\subset R_s$ we define 
\begin{equation} \label{eq:giustizzato}
E^G \coloneqq \bigl\{ (x,y)\in R_s \,:\, h(x_0)-2\ell s \leq y \leq h(x_0)-2\ell s + \hu(E_x)  \bigr\},
\end{equation}
with $E_x\coloneqq\{t\in\R \,:\, (x,t)\in E \}$. In the case where
\begin{equation}\label{eq:assumption_il_2}
\hu(\Gamma^A\cap R_s) \leq \hu(\Gamma^{AB}\cap R_s) 
\end{equation}
we set instead
\begin{equation}\label{eq:comeptitor_il_2}
A_s\coloneqq A\setminus R_s,\quad\quad\quad
B_s\coloneqq B \cup \left( A \cap R_s \right). 
\end{equation}
Note that the configuration $v_s=\chi_{A_s}-\chi_{B_s}$ is an admissible competitor for the quasi-minimality inequality in Definition~\ref{def:quasimin}. In the first case this follows from \eqref{eq:no_exit_above}, while in the second case the free profile of the configuration is left unchanged.
Denote by $h_s:Q_L\to[0,\infty)$ the admissible profile such that $\Omega_{h_s}=A_s\cup B_s$.

\begin{figure}
\begin{center}
\includegraphics[scale=0.6]{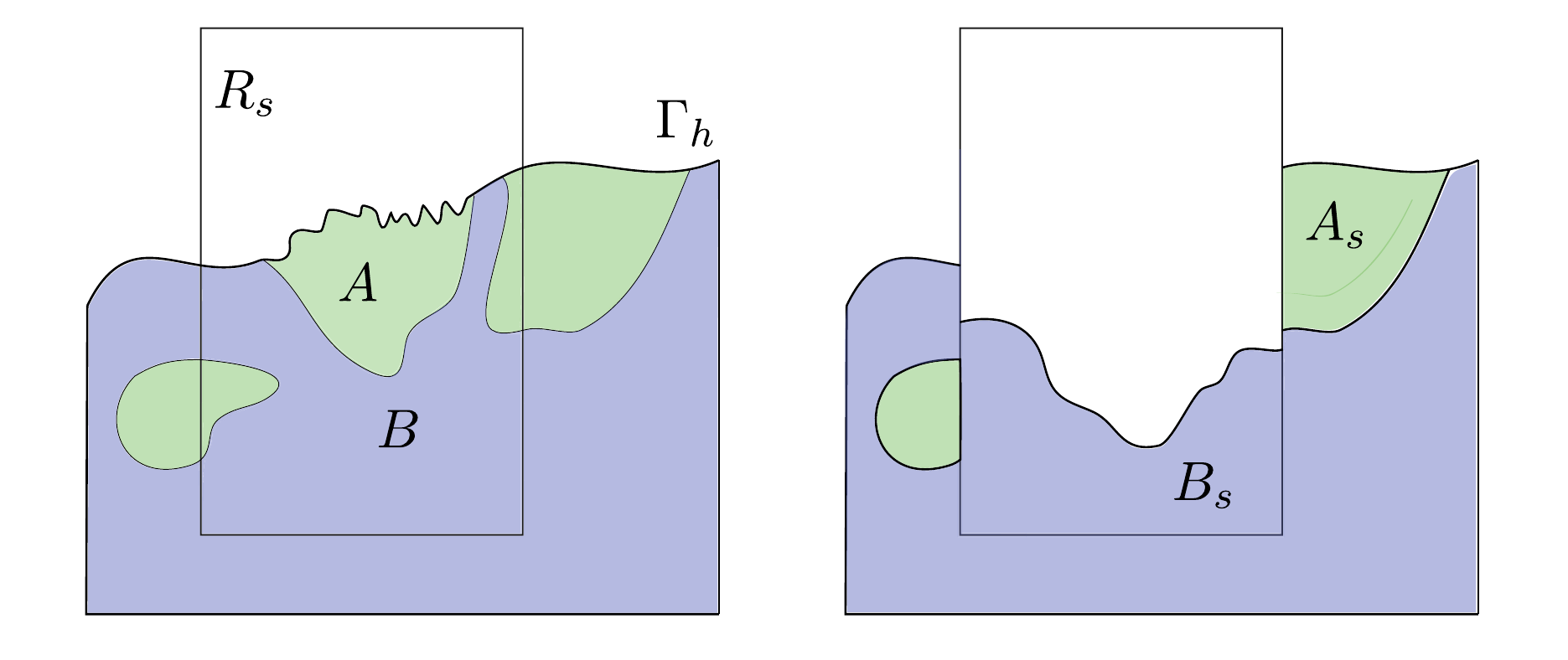}
\end{center}
\caption{The construction of the competitor in the case $\hu(\Gamma^A\cap R_s) > \hu(\Gamma^{AB}\cap R_s)$: we remove $A\cap R_s$ and we `move down' $B\cap R_s$.}
\label{fig:competitor1}
\end{figure}

Assume that \eqref{eq:assumption_il_1} holds, and thus $A_s$ and $B_s$ are defined as in \eqref{eq:comeptitor_il_1}.
Then, by an argument similar to \cite[Lemma~14.7]{G} one can prove that
\begin{equation}\label{eq:ineq_perimeter_1}
\hu\left( \partial^*B_s \cap R_s \right) \leq \hu(\partial^*B \cap R_s) + \hu(A^{(1)}\cap \partial^-R_s).
\end{equation}
Moreover, one can check that
\begin{equation}\label{eq:ineq_perimeter_2}
|h_s^+(x_0-s)-h_s^-(x_0-s)|+|h_s^+(x_0+s)-h_s^-(x_0+s)|\leq \hu(A^{(1)}\cap\partial R_s) \xupref{eq:mprime}{\leq} m'(s),
\end{equation}
and that
\begin{equation}\label{eq:ineq_perimeter_4}
|B_s\asymm B| \leq 2|A\cap R_s| = 2m(s).
\end{equation}
The quasi-minimality of $u$, together with \eqref{eq:mprime}, \eqref{eq:no_interface_boundary}, \eqref{eq:ineq_perimeter_1}, \eqref{eq:ineq_perimeter_2}, and \eqref{eq:ineq_perimeter_4}, yields
\begin{equation}\label{eq:qm_il_1}
\begin{split}
\sigma_A \hu(&\Gamma^A\cap R_s) + \sigma_B  \hu(\Gamma^B\cap R_s) + \sigma_{AB}\hu(\Gamma^{AB}\cap R_s) \\
& \leq \sigma_B\hu(\partial^*B_s\cap R_s) + \max\{\sigma_A,\sigma_{AB}\}\hu(A^{(1)}\cap\partial R_s) \\
& \qquad + \max\{\sigma_A, \sigma_B\}\bigl(|h_s^+(x_0-s)-h_s^-(x_0-s)|+|h_s^+(x_0+s)-h_s^-(x_0+s)|\bigr) \\
& \qquad + \Lambda m(s) + \Lambda|B_s\asymm B| \\
& \leq \sigma_B\hu(\Gamma^B\cap R_s) + \sigma_{B}\hu(\Gamma^{AB}\cap R_s) + 3\max\{\sigma_A, \sigma_B, \sigma_{AB}\} m'(s) +3\Lambda m(s).
\end{split}			
\end{equation}
By using \eqref{eq:strict_ineq_delta}, \eqref{eq:assumption_il_1}, and the isoperimetric inequality, we estimate
\begin{equation}\label{eq:ineq_il_1}
\begin{split}
\sigma_A\hu(\Gamma^A\cap R_s) & + (\sigma_{AB}-\sigma_B) \hu(\Gamma^{AB}\cap R_s) \\
& \geq \sigma_A\hu(\Gamma^A\cap R_s) + \min\{\sigma_{AB}-\sigma_B,0\}\hu(\Gamma^{A}\cap R_s) \\
& \geq \delta\hu(\Gamma^{A}\cap R_s)
\geq \frac{\delta}{2} \hu(\Gamma^{AB}\cap R_s) + \frac{\delta}{2} \hu(\Gamma^A\cap R_s) \\
& = \frac{\delta}{2}  \Bigl( \per(A\cap R_s) - \hu(A^{(1)}\cap\partial R_s) \Bigr)\geq C \bigl( m(s)^{\frac{1}{2}} - m'(s) \bigr).
\end{split}
\end{equation}
By inserting \eqref{eq:ineq_il_1} into \eqref{eq:qm_il_1} we obtain the desired inequality \eqref{eq:diff_ineq_il} in the case where assumption \eqref{eq:assumption_il_1} holds.

Assume now that \eqref{eq:assumption_il_2} is in force, and thus $A_s$ and $B_s$ are defined as in \eqref{eq:comeptitor_il_2}. In this case, the quasi-minimality inequality for $u$ yields, using also \eqref{eq:mprime} and \eqref{eq:no_interface_boundary},
\begin{align}\label{eq:qm_il_2}
\sigma_A \hu(\Gamma^A\cap R_s) + \sigma_{AB}\hu(\Gamma^{AB}\cap R_s)
&\leq \sigma_B\hu(\Gamma^{A}\cap R_s) + \sigma_{AB} \hu(A^{(1)}\cap\partial R_s) + 2 \Lambda m(s) \nonumber \\
&\leq \sigma_B\hu(\Gamma^{A}\cap R_s) + \sigma_{AB}m'(s) +2 \Lambda m(s).
\end{align}
By using \eqref{eq:strict_ineq_delta}, \eqref{eq:assumption_il_2}, and the isoperimetric inequality, we estimate (with similar computations as those used to get \eqref{eq:ineq_il_1})
\begin{equation}\label{eq:ineq_il_3}
(\sigma_A-\sigma_B) \hu(\Gamma^A\cap R_s) + \sigma_{AB} \hu(\Gamma^{AB}\cap R_s)
\geq C \bigl( m(s)^{\frac{1}{2}} - m'(s) \bigr).
\end{equation}
By inserting \eqref{eq:ineq_il_3} into \eqref{eq:qm_il_2} we obtain the desired inequality \eqref{eq:diff_ineq_il} also in the case where assumption \eqref{eq:assumption_il_2} holds.

\medskip\noindent\textit{Step 2.}
We conclude as follows. Let $x_0$ be as in the statement, and let $z_0\coloneqq(x_0,h(x_0))\in \partial^*A\cup\partial^*B$. Assume $z_0\in\partial^*B$.
Let $r_0>0$ be as in Step~1, so that $h$ is Lipschitz continuous in $(x_0-r_0,x_0+r_0)$ with Lipschitz constant $\ell$. 
Let also $\e>0$ be given by Step~1. Then, since $z_0\in\partial^*B\cap\partial^*V$, it is possible to find $r\in(0,r_0)$ such that
$|A\cap R_r(z_0)|<\varepsilon r^2.$
From Step~1 we get that $|A\cap R_{r/2}|=0$. This implies that in $R_{r/2}$ there are only the sets $V$ and $B$. In particular, we also have $\partial^+R_{r/2}\subset B^{(0)}$, $\partial^-R_{r/2}\subset B^{(1)}$ (recall \eqref{eq:no_exit_above}), and $B\cap R_{r/2}$ is the subgraph of a function of bounded variation.

Let now $E\subset R_{r/2}$ be any set of finite perimeter such that $E\asymm B\subset\subset R_{r/2}$, and let $E^G$ be the set defined in \eqref{eq:giustizzato}. We can test the quasi-minimality inequality with the competitor obtained by replacing the phase $B\cap R_{r/2}$ by $E^G$, which is admissible since it satisfies the graph constraint. We therefore find
\begin{align}\label{eq:quasiminper}
\sigma_B\per(B;R_{r/2}) \leq \sigma_B\per(E^G;R_{r/2}) + \Lambda|(E^G\asymm B)\cap R_{r/2}|.
\end{align}
We observe now that, similarly to \eqref{eq:ineq_perimeter_1}, we have $\per(E^G;R_{r/2})\leq \per(E;R_{r/2})$; moreover (since $B\cap R_{r/2}=(B\cap R_{r/2})^G$) we also have
\begin{align*}
|(E^G\asymm B)\cap R_{r/2}|
& = |E^G\asymm (B\cap R_{r/2})^G|
 = |E^G| + |(B\cap R_{r/2})^G| - 2|E^G\cap(B\cap R_{r/2})^G| \\
& \leq |E| + |B\cap R_{r/2}| - 2|E\cap(B\cap R_{r/2})| = |E\asymm (B\cap R_{r/2})|.
\end{align*}
 Then, inserting the previous inequalities inside \eqref{eq:quasiminper} we find
\begin{align*}
\per(B;R_{r/2}) \leq \per(E;R_{r/2}) + \frac{\Lambda}{\sigma_B} |(E\asymm B)\cap R_{r/2}|
\end{align*}
for every set of finite perimeter $E\subset R_{r/2}$ such that $E\asymm B\subset\subset R_{r/2}$.
This shows that $B$ is a quasi-minimizer of the perimeter inside $R_{r/2}$ in the classical sense, that is \emph{without} the graph constraint.
Thus, the regularity of $\Gamma_h\cap R_{r/2}=\partial^*B\cap R_{r/2}$ follows from classical regularity results for quasi-minimizers of the perimeter (see, for instance, \cite[Theorems~26.5 and 28.1]{Mag}). This concludes the proof.
\end{proof}

By collecting all the previous statements, we obtain the properties listed in Theorem~\ref{thm:regularity}.

\begin{proof}[Proof of Theorem~\ref{thm:regularity}]
The infiltration property \ref{item1-reg} follows by Proposition~\ref{prop:infiltrationV} and Proposition~\ref{prop:infiltrationAB}.
The Lipschitz regularity \ref{item2-reg} and the characterization of the singular set \ref{item3-reg} are proved in Proposition~\ref{prop:lipschitz} and Remark~\ref{rmk:singularset}. The internal regularity of the interface \ref{item4-reg} is discussed in Remark~\ref{rem:interior_regularity}. The $C^{1,\alpha}$-regularity of the graph \ref{item5-reg} is proved in Proposition~\ref{prop:C1alpha_graph}.
\end{proof}


\bigskip
\bigskip
\noindent
{\bf Acknowledgments.}
MB is member of 2020 INdAM - GNAMPA project \textit{Variational Analysis of nonlocal models in applied science}. During the period at Heriot-Watt University, the research of RC was supported by the UK Engineering and Physical Sciences Research Council (EPSRC) via the grant EP/R013527/2 \textit{Designer Microstructure via Optimal Transport Theory}.

\bibliographystyle{siam}
\bibliography{bibliography}

\end{document}